\title{
Unstable Interface Dynamics for Gravity Stokes Flow}
\author{Francisco Gancedo
, Rafael Granero-Belinchón, Zhongtian Hu, Elena Salguero, Yao Yao 
}
\numberwithin{equation}{section}
\newtheorem{thm}{Theorem}[section]
\newtheorem{lem}[thm]{Lemma}
\newtheorem{cor}[thm]{Corollary}
\newtheorem{prop}[thm]{Proposition}
\newtheorem{rmk}[thm]{Remark}
\newtheorem{defn}[thm]{Definition}
\newcommand{\eps}{\epsilon}
\newcommand{\R}{\mathbb{R}}
\newcommand{\N}{\mathbb{N}}
\newcommand{\T}{\mathbb{T}}
\newcommand{\calS}{\mathcal{S}}
\newcommand{\calL}{\mathcal{L}}
\newcommand{\calK}{\mathcal{K}}
\newcommand{\dotz}{\dot{z}}
\newcommand{\p}{\partial}
\DeclareMathOperator{\supp}{supp}
\newcommand{\red}[1]{\textcolor{red}{#1}}
\definecolor{orange}{rgb}{1,.5,0}
\begin{document}

\maketitle
\begin{abstract}
We investigate some unstable behavior of the interface given by two incompressible fluids of different densities evolving by the regular Stokes law with gravity force. In the unstable scenario, where the denser fluid lies above the lighter fluid, we prove infinite-in-time growth of the length or the curvature of the interface. We support these analytical results with numerical simulations that confirm the predicted growth phenomena. In the stable configuration, where the denser fluid lies below the lighter fluid, we show that certain initial configurations evolve into the unstable regime in finite time. 
\end{abstract}

\section{Introduction}
In this paper, we study various unstable behavior of the sharp interface problem for the two-dimensional gravity Stokes flow. Indeed, we focus on the study of the two-phase Stokes system for incompressible, homogeneous fluids: 
\begin{equation}\label{eq:Stokes}
    \begin{cases}
        -\Delta u^{\pm} = -\nabla p^\pm - (0,\rho^\pm)^T,&\text{in }\Omega^\pm(t),\\
        \nabla\cdot u^\pm = 0,&\text{in }\Omega^\pm(t),\\
        \llbracket(\nabla u + \nabla u^T - p \, \text{Id})n\rrbracket = 0,&\text{on }\Gamma(t),\\
        \llbracket u \rrbracket = 0,&\text{on }\Gamma(t),\\
        z_t(\alpha,t) = u(z(\alpha,t),t),&\text{on }\Gamma(t),
    \end{cases}
\end{equation}
where $\rho^\pm$ are constant but different densities of the fluids located in the upper region 
$\Omega^+$ and the lower region $\Omega^-$, respectively. We consider a horizontally periodic flow, namely, we restrict ourselves to the case of
$$
\Omega^+(t)\cup\Omega^-(t)\cup\Gamma(t)=\mathbb{T}\times \mathbb{R}
$$
where $\T = [-\pi, \pi)$ is the standard torus and $\partial\Omega^{\pm}(t)=\Gamma(t)$ is the moving free boundary. Here, we also use the notation $\llbracket f \rrbracket = f^+ - f^-$ on $\Gamma(t)$. The free boundary is also denoted by the curve $z(\alpha,t)$ for some parametrization, with $\alpha \in \mathbb{T}$. The main motivation to study such a free boundary problem comes from the fact that at low Reynolds numbers, the viscosity forces dominate the inertial forces acting on the fluid. Thus, in a certain limiting regime, the fluids flow in the Stokes regime and their dynamics is captured by \eqref{eq:Stokes}. For more studies on steady viscous flow we refer to the pioneer work Ladyzhenskaya \& Solonnikov \cite{ladyzhenskaya1983determination} and the references therein.

The previous multiphase system can also be viewed as a sharp-interface case of the following, so called Stokes-Transport system \cite{hofer2025sedimentation}

\begin{equation}\label{eq:StokesTr}
    \begin{cases}
        -\Delta u= -\nabla p - (0,\rho)^T,&\text{in }\mathbb{T}\times \mathbb{R},\\
        \nabla\cdot u = 0,&\text{in }\mathbb{T}\times \mathbb{R},\\
\partial_t \rho+\nabla\cdot(u\rho)=0, &\text{in }\mathbb{T}\times \mathbb{R}.   \end{cases}
\end{equation}

This active scalar equation has received significant attention in recent years. In particular, this system was derived by Höfer \cite{hofer2018sedimentation}  and Mecherbet \cite{mecherbet2019sedimentation} as a model for sedimentation of particles in a viscous fluid, in the case of negligible inertia. The existence and uniqueness of solutions under certain integrability and regularity assumptions for the density have been proved in several recent works (e.g. \cite{cobb2023wellposednessfractionalstokestransport,dalibard2025long,grayer2023dynamics,lazar2025global,leblond2022wellposedness,mecherbet2021onthesedimentation}). For instance, Grayer II \cite{grayer2023dynamics} showed the global well-posedness for patch-like and $L^1 \cap L^\infty$ initial density, as well as regularity persistence of the boundary of the patch. More recently, it was proved in \cite{lazar2025global} that higher Hölder regularities are also propagated. Moreover,  Dalibard, Guillod \& Leblond \cite{dalibard2025long} proved global well-posedness for bounded initial density in the case of bounded domains and the strip $\Omega =  \mathbb{T} \times (0,1)$. We refer the reader to \cite{mecherbet2024afewremarks} for a detailed review of the literature and well-posedness results for densities in $L^1 \cap L^p$. Our setting differs to the previous frameworks in the sense that the density field is merely a piecewise constant function, so it lacks integrability in the chosen unbounded region.


In \cite{GGS25_SIMA} it was proved that one can equivalently write \eqref{eq:Stokes} in the form of the following Contour Dynamics Equation (CDE):
\begin{equation}
    \label{eq:cde}
    \p_t z(\alpha, t) = (\rho^- - \rho^+)\int_\T \calS(z(\alpha, t) - z(\beta, t)) \, \dotz^\perp(\beta, t)z_2(\beta,t) d\beta,\quad \alpha \in \T,
\end{equation}
where $\dotz(\beta) := \p_\beta z(\beta)$. Moreover, the kernel $\calS(x)$ is the $x_1$--periodic Stokeslet, which can be explicitly written as 
$$
\calS(x_1,x_2) = \frac{1}{8\pi}\log(2(\cosh(x_2) - \cos(x_1))) \, \text{Id} - \frac{x_2}{8\pi(\cosh(x_2) - \cos(x_1))}\begin{bmatrix}
    -\sinh(x_2) & \sin(x_1)\\
    \sin(x_1) & \sinh(x_2)
\end{bmatrix}.
$$
Although for gravity $x_1-$periodic flows such contour dynamics approach was new, an integral approach has been used before for the case of capillarity-driven Stokes interfaces in the whole 2D plane. In this regard, the interested reader is referred for instance to the papers by Badea \& Duchon and Matioc \& Prokert \cite{badea1998capillary,matioc2021two} (see also \cite{matioc2022two,matioc2023capillarity}). More recently, the combined case of gravity and capillarity forces was studied in \cite{bohme2025wellposedness,böhme2025wellposednessrayleightaylorinstabilitytwophase} in the horizontally-periodic case. See also \cite{JaeHoChoi} for the capillarity-driven stability of nearly circular closed curves in the absence of gravity.

A remarkable fact about the problem \eqref{eq:cde} is that the equation is globally well-posed, as long as we impose the initial regularity assumption $z_0 \in C^{k,\gamma}(\T)$ for arbitrary $k\in\N\cup\{0\}$ and $0<\gamma< 1$, as shown in \cite{GGS25}. In particular, this global well-posedness result holds regardless of the size of the initial data or the initial stratification. Namely, the solution exists globally whether the denser fluid lies above or below the lighter one. Consequently, this global existence result is independent of the Rayleigh–Taylor condition. This behavior stands in sharp contrast with that of the two-phase Muskat problem, a closely related active scalar equation, which can in fact be viewed as a more singular counterpart of \eqref{eq:cde}. 

Moreover, if one restricts to the behavior of \eqref{eq:cde} near stably stratified states, \cite{GGS25_SIMA} shows that perturbations of such states are asymptotically stable in both Sobolev spaces and in certain spaces of analytic functions. These stability results are based on the fact that, on the linear level, \eqref{eq:cde} exhibits a weak damping effect where the linear operator is given by the Neumann-to-Dirichlet operator (instead of the more common Dirichlet-to-Neumann operator appearing in the linearized version of both the Muskat, the water waves and even the capillary Stokes problems). On the other hand, in \cite{GGS25_SIMA} it is shown that unstable solutions grow. This instability result roughly reads as follows: fix an arbitrary $T$, then there exists an initial interface that grows exponentially at least for such $T$. Due to the way these solutions are constructed, both, the initial interface and its corresponding solution at time $T$ are in fact small in certain sense.

However, the fact that the solution exists globally regardless of the size of the initial data and the sign of the density jump raises the following rather big question: \emph{what is the dynamics of these globally defined solutions?} Some possible \emph{a priori} answer, for instance, would be that the solutions stabilize around particular stable, steady, non-flat interfaces. A different possible answer would be that every solution keeps growing (in a certain sense) for arbitrary large times. It is in this question that we focus on. Indeed, with the discussion above, the exploration of both instabilities and finite-time singular behavior of the system \eqref{eq:Stokes} is rather limited, both when the initial configuration is stable ($\rho^+ > \rho^-$) and unstable ($\rho^+ > \rho^-$). Hence, in this paper, we concern ourselves with the study of possible instability scenarios in both stable and unstable regimes. 

Our first main result aims to capture long-time instabilities for initial data belonging to the unstable regime. To the best of the authors' knowledge, there are no rigorous results discussing the long-time behavior of the solutions to such initial data, except for the preservation of boundary regularity as shown in \cite{GGS25}. In the first main result of this paper, we show that the infinite-in-time growth of some key geometric quantities of the free interface is rather generic given initial data in the unstable regime. More precisely, we denote the the perimeter of the interface $\Gamma_t$ as
\[
L(t) := |\Gamma_t| = \int_\T |\dot{z}(\beta,t)| d\beta
\]
and the maximal curvature of the interface $\Gamma_t$ as
\[
\calK(t) := \max_{x \in \Gamma_t}|\kappa(t)|.
\]
This result can be stated as follows: for an initial interface $\Gamma_0$ under certain symmetry conditions, either the perimeter or the maximal curvature of the evolved interface $\Gamma_t$ must grow infinitely in time with some explicit power-law rates. 
\begin{thm}\label{thm:main}
    Let $\rho(x,t)$ be the global-in-time solution to an unstable initial datum $\rho_0 \not\equiv \rho_s$, where 
    $$
    \rho_s(x_2) = \begin{cases}
        1, & x_2 > 0,\\
        -1, & x_2 < 0,
    \end{cases}
    $$
    is the unstable stratified state. Moreover, suppose that $\Gamma_0$ is centrally symmetric and evenly symmetric with respect to $x_1 = \pm\frac{\pi}{2}$. Then for any $\eps > 0$, we have
    \begin{equation}
        \label{est:main}
        \limsup_{t \to \infty} t^{-{\frac{1}{17}} + \eps} (L(t) + \calK(t))= \infty.
    \end{equation}
\end{thm}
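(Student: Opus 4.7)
My plan is to proceed by contradiction using the gradient-flow structure of gravity Stokes together with a monotone energy functional. Testing $-\Delta u=-\nabla p-(0,\rho)^T$ against $u$, with incompressibility and the stress-jump condition, gives the identity
\begin{equation*}
 \frac{d}{dt}\int_{\T\times\R}\rho\, x_2\, dx \;=\; -\int_{\T\times\R}|\nabla u|^2\, dx.
\end{equation*}
Subtracting the (divergent) contribution from the reference state $\rho_s$ converts the left-hand side into the finite geometric functional
\begin{equation*}
 E(t)\;:=\;\frac{\rho^+-\rho^-}{2}\int_{\T} z_2(\alpha,t)^2\, z_1'(\alpha,t)\, d\alpha,
\end{equation*}
which in the graph case is proportional to $\int_\T f^2\, dx_1$ and is monotone increasing ($\dot E(t)=\int|\nabla u|^2\geq 0$) in the unstable regime. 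The two symmetry hypotheses are preserved by the CDE \eqref{eq:cde} via uniqueness from \cite{GGS25}. Steady states must have $u\equiv 0$, which forces $\nabla p=-(0,\rho)^T$ and hence a horizontal interface; combined with central symmetry this pins the flat steady state at $x_2=0$, where $E=0$. Since $\rho_0\not\equiv\rho_s$ we have $E(0)>0$, and monotonicity then gives $E(t)\geq E(0)$ for every $t$.

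Next, I would establish two competing geometric bounds for $E$. For the \emph{upper} bound, central symmetry forces $\Gamma_t$ to pass through $(0,0)$, so $|z_2|\leq L(t)/2$ on $\Gamma_t$; a direct estimate then yields $E(t)\leq C L(t)^2$ in the graph case and $E(t)\leq C L(t)^3$ in general. For the \emph{lower} bound on the dissipation $\dot E=\int|\nabla u|^2$, the boundary identity $u|_{\Gamma_t}=z_t$ combined with an $H^{1/2}$-trace and Korn-type estimate---whose constants degenerate only polynomially in the Lipschitz/curvature norm of $\Gamma_t$, hence in $L(t)+\calK(t)$---gives $\int|\nabla u|^2\gtrsim C(L,\calK)^{-1}\|z_t\|_{\dot H^{1/2}(\Gamma_t)}^2$. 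The gradient-flow identity $\dot E=\langle \delta E/\delta z, z_t\rangle$ together with Cauchy--Schwarz in the $\dot H^{\pm 1/2}$ duality on $\Gamma_t$ then yields a dissipation inequality of the form
\begin{equation*}
 \dot E(t)\;\geq\; \frac{c\,E(t)^p}{(L(t)+\calK(t))^q}
\end{equation*}
for explicit exponents $p, q > 0$ extracted by tracking the Sobolev and interpolation constants.

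The proof concludes by assuming for contradiction that $L(t)+\calK(t)\leq C t^{1/17-\eps}$ for all large $t$: the upper bound then gives $E(t)\leq C t^{2(1/17-\eps)}$ (or $3(1/17-\eps)$ in the non-graph case), while integrating the differential inequality from the lower bound produces $E(t)\geq c\, t^{\sigma(\eps)}$ with $\sigma(\eps)$ strictly larger than the exponent above as soon as $p$ and $q$ satisfy the critical relation that, after balancing with the upper bound, pinpoints the exponent $1/17$. The main technical obstacle---and what fixes the numerical value $1/17$---is proving the quantitative dissipation bound on $\int|\nabla u|^2$ with sharp polynomial dependence on $L$ and $\calK$, uniformly over horizontally periodic curves (possibly non-graph) preserving the given symmetries; this requires careful boundary-integral analysis of the periodic Stokeslet kernel appearing in \eqref{eq:cde} and of how the $H^{1/2}$-trace constants on $\Gamma_t$ degenerate as the curve becomes steep or highly curved.
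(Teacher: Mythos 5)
Your outline correctly identifies the monotone modulated potential energy $E(t)=\int_{\T\times\R}x_2(\rho_s-\rho)\,dx$ and its dissipation identity $E'(t)=\|(-\Delta)^{-1}\partial_1\rho\|_{L^2}^2=\int|\nabla u|^2=:\delta(t)$, which is indeed the backbone of the paper's argument, and the contradiction scaffold (assume $L+\calK\lesssim t^{1/17-\eps}$, squeeze $E$ from above via geometry and from below via accumulated dissipation) matches the structure of Section~3. You also correctly note that $E(0)>0$ whenever $\rho_0\not\equiv\rho_s$ and that symmetry is propagated by uniqueness, both of which the paper uses.

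However, there is a genuine gap at the heart of your proposal: the claimed dissipation estimate $\dot E(t)\geq c\,E(t)^p/(L(t)+\calK(t))^q$, and in particular your proposed mechanism for proving it. The inequality you would actually need has $p=0$: the paper's Key Lemma (Lemma~\ref{lem:case2main2}) shows that whenever $\delta(t)$ is small enough, one must have $\calL(t)\gtrsim\delta(t)^{-1/15}$ or $\calK(t)\gtrsim\delta(t)^{-1/15}$, equivalently $\delta(t)\gtrsim(L(t)+\calK(t))^{-15}$, with no $E$-dependence. Balancing the resulting bound $E(t)\gtrsim t^{1-15\alpha}$ against the a priori bound $E(t)\lesssim t^{2\alpha}$ from $L$-control and Lemma~\ref{lem:perimeterlb} pins $\alpha=1/17$. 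Including an $E^p$ factor would be analogous to a \L{}ojasiewicz-type inequality along the gradient flow, but nothing in the problem guarantees such a relation, and it changes the exponent. More importantly, the route you propose to establish the dissipation bound --- $H^{1/2}$-trace and Korn-type estimates on $\Gamma_t$ with constants degenerating polynomially in $L$ and $\calK$, combined with $\dot H^{\pm1/2}$ duality --- does not produce a \emph{lower} bound on $\delta$: that chain gives at best an upper bound on $\dot E$ in terms of $\|\delta E/\delta z\|_{H^{-1/2}}$, and it contains no mechanism to see the fine interlacing of the two phases that is actually responsible for the quantitative growth. The paper's proof of the Key Lemma is an entirely different, constructive geometric argument: a test-function estimate (Lemma~\ref{lem:h-2}) shows that if $\delta$ is small then one cannot fit two same-height disks of radius $\gg\delta^{1/5}$ in opposite phases; a measure-theoretic argument (Lemma~\ref{lem:h(t)}) locates a height level $h(t)<0$ entirely inside a thin tubular neighborhood of $\Gamma_t$; one then places $\sim\eps_0^{-2/3}$ disjoint disks along that level, each of which $\Gamma_t$ must visit, uses the curvature bound to force near-vertical tangents there (Proposition~\ref{prop:largeslope}), and charges arclength $\gtrsim\eps_0^{1/3}$ for each visit via Proposition~\ref{prop:budget}. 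That counting argument is what yields $\calL\gtrsim\eps_0^{-1/3}\gtrsim\delta^{-1/15}$. Without this (or an equivalent quantitative mechanism), your proposal remains at the heuristic level on its central step, which is precisely the one you flag as the ``main technical obstacle.''

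Two further minor points. First, the paper does not integrate a differential inequality; since only a $\limsup$ is required it suffices to extract a subsequence $t_n\nearrow\infty$ with $\delta(t_n)\lesssim t_n^{-15/17}$ by pigeonhole from the bound $E(t)\lesssim t^{2/17}$, and then apply the Key Lemma at those times. Your ODE version would also work once the lower bound on $\delta$ is in hand, but it is not what the paper does. Second, your energy in the graph case is proportional to $\int f^2$ and your upper bound $E\lesssim L^2$ comes from $|z_2|\leq L/2$; the paper uses the slightly different (and cleaner for non-graph interfaces) route via Lemma~\ref{lem:perimeterlb}, namely $\max\{M,m\}\gtrsim E^{1/2}$ and $L\geq M+m$, but this difference is cosmetic.
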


\begin{figure}[h]
    \centering
    \includegraphics{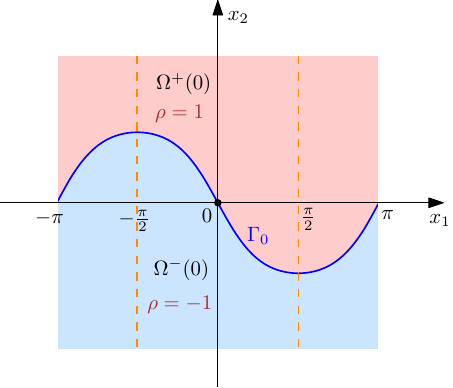}
    \label{fig:init}
    \caption{An example of an unstable, un-stratified configuration satisfying our symmetry assumptions in Theorem~\ref{thm:main}. Note that $\Gamma_0$ is centrally symmetric about the point $(0,0)$, and evenly symmetric with respect to the  orange dashed lines $x_1 = \pm\frac{\pi}{2}$.}
\end{figure}

\begin{rmk}
    \begin{enumerate}
        \item The growth phenomenon described in Theorem \ref{thm:main} qualitatively agrees with the numerical simulations reported in Section \ref{sec:numerics}, which contain evolutions displaying large growth in the length of the interface and boundedness in interface's curvature. These numerical evidences indeed suggest the validity of the dichotomy in Theorem \ref{thm:main}. 
        
        \item We do not expect the growth rate in \eqref{est:main} to be sharp. In fact, the numerical examples in Section \ref{sec:numerics} seem to suggests a linear rate of growth, which is much faster than the sublinear rate obtained in our main theorem.
    \end{enumerate}
\end{rmk}

The proof of Theorem \ref{thm:main} crucially exploits a natural potential energy possessed by the Stokes-Transport system, which is monotone as the solution evolves in time (See Section~\ref{sec_energy} for derivation). The majority of the analysis will focus on the analysis of this potential energy, where we obtain inequalities that connect its time derivative with the perimeter and maximal curvature of the evolved surface $\Gamma(t)$. We remark that the analysis of monotone quantities is also important in proving large-time growth phenomena and creation of small scales for other fundamental incompressible fluid models. We refer the readers to \cite{drivas2024twisting,kiselev2024small,kiselev2023small,park2024growth} and references therein.

We also highlight a connection between the growth phenomena found in Theorem \ref{thm:main} and the formation of viscous fingering, which occurs at the interface separating two immiscible fluids in a porous medium driven by gravity and is highly related to the well-known Rayleigh-Taylor instability. We refer the readers to \cite{menon2005dynamic,otto1997viscous} for a more thorough discussion about growth rate of fingers in this regime. In this regard, we give the second main result of this work, where we obtain an estimate on the number of {fingers} for some class of solutions given that certain {relaxation} in the number of {fingers} is observed in the numerics (see Section \ref{sec:numerics}).

A precise statement concerning the number of fingers is as follows:
\begin{thm}\label{thm:2}
Fix $T>0$ and $\mu>0$. There exists a family of initial interfaces $(\alpha,g_0(\alpha))$ such that their corresponding solutions to the Stokes system $(\alpha,g(\alpha,t))$ satisfy that they are analytic in the strip $\{\alpha+i\beta\text{ with } |\beta|\leq \nu^*\}$. Then $[-\pi, \pi]=I_\mu\cup R_\mu$, where $I_\mu$ is an union of at most $[\frac{4\pi}{\nu^*}]$ intervals open in $[-\pi, \pi]$, and
\begin{itemize}
\item $|\partial_\alpha g(\alpha,t)| \leq \mu, \text{ for all }\alpha\in I_\mu,$
\item $\text{Card}\{\alpha \in R_\mu : \partial_\alpha g(\alpha,t)=0\}\leq
F(t),$
\end{itemize}
where the growth of $F(t)$ is bounded from below as follows
$$
C(\nu^*,\mu,g_0)(1+\sqrt{t})\leq F(t).
$$
\end{thm}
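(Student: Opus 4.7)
My approach would combine three ingredients: persistence of analyticity in a horizontal strip under the Stokes contour dynamics \eqref{eq:cde}, a Jensen-type complex-analytic zero-counting argument, and a quantitative growth estimate for the analytic norm of the solution.

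First, I would construct the family of initial interfaces $g_0$ in a class strictly finer than the claimed regularity, namely analytic in $\{|\mathrm{Im}\,\alpha| < \nu_0\}$ with $\nu_0 > \nu^*$, and with Fourier coefficients decaying sufficiently fast. Using the analytic-norm energy estimates developed in \cite{GGS25_SIMA,GGS25} for the contour equation \eqref{eq:cde}, I would show that the solution remains analytic in $\{|\mathrm{Im}\,\alpha| \leq \nu^*\}$ for all $t \geq 0$, obtaining in the process a quantitative bound on
$$M(t) := \sup_{|\mathrm{Im}\,\alpha| \leq \nu^*} |\partial_\alpha g(\alpha,t)|.$$

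Next, set $h := \partial_\alpha g$. The structure of $I_\mu(t) := \{\alpha \in [-\pi,\pi] : |h(\alpha,t)| \leq \mu\}$ is controlled via a Bernstein/Jensen-type observation: since $h$ is $2\pi$-periodic and analytic in a strip of width $\nu^*$, the real zeros of $|h|^2 - \mu^2$ on $[-\pi,\pi]$ — equivalently, the level crossings of $|h|$ at height $\mu$ — are limited by the strip width, bounding the number of connected components of $I_\mu(t)$ by $\lfloor 4\pi/\nu^*\rfloor$. On the complementary set $R_\mu(t)$, I would cover $[-\pi,\pi]$ with $\sim (\nu^*)^{-1}$ disks of radius $\sim \nu^*$ centered on the real axis and apply Jensen's formula in each disk to $h$, where $|h|$ is bounded below by $\mu$ at the endpoints of each component of $R_\mu$. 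Each disk contributes at most $\lesssim \log(M(t)/\mu)$ zeros, so the total count is $F(t) \lesssim (\nu^*)^{-1} \log(M(t)/\mu)$.

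The final step is to extract the $\sqrt{t}$ rate. This reduces to proving a matching lower bound of the form $\log M(t) \gtrsim \sqrt{t}$, i.e.\ $M(t) \gtrsim e^{c\sqrt{t}}$, which would be derived from the (in)stability structure of the linearized Stokes contour dynamics: successively higher Fourier modes become activated over time, forcing the analytic norm in a fixed strip to grow exponentially in $\sqrt{t}$. The main obstacle will be precisely this step — closing a differential inequality for $M(t)$ that yields the rate $e^{c\sqrt{t}}$ and not a weaker or stronger rate, while simultaneously preserving analyticity in the fixed strip $|\mathrm{Im}\,\alpha| \leq \nu^*$ for all $t$. This is delicate because it requires sharp estimates for the singular Stokeslet kernel $\calS$ under complex shifts $\alpha \mapsto \alpha + i\beta$, as well as commutator-type bounds to handle the nonlinear self-interaction in analytic Sobolev norms on the complex strip; the weakly dissipative Neumann-to-Dirichlet structure identified in \cite{GGS25_SIMA} would play a crucial role in ensuring the rate is exactly $\sqrt{t}$ rather than linear in $t$.
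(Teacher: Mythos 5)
Your outline has the right skeleton — persistence of analyticity in a strip, a Jensen/Grujic-type zero-counting lemma (the paper uses Lemma~\ref{grujic} from \cite{grujic2000spatial}, which packages exactly the Jensen argument you sketch with the bound $\frac{2}{\log 2}\frac{L}{\tau}\log(\sup|\partial_\alpha u|/\mu)$), and a $\sqrt{t}$ rate extracted from the growth of the analytic norm. However, the step you flag as the ``main obstacle'' is precisely where your proposal breaks, and the paper avoids it entirely by a trick you have not used: \emph{time reversal from the stable regime}.

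Concretely, you propose to evolve the unstable flow forward in time and prove (i) persistence of analyticity in a \emph{fixed} strip $\{|\mathrm{Im}\,\alpha|\le\nu^*\}$ for all $t$, and (ii) a lower bound $\log M(t)\gtrsim\sqrt{t}$. These two requirements fight each other: for the RT-unstable nonlinear flow, analyticity radii typically \emph{shrink}, and a direct differential inequality that both holds the strip width fixed and produces exactly $e^{c\sqrt{t}}$ growth (no more, no less) is not available from the estimates in \cite{GGS25_SIMA,GGS25}. The paper instead starts from a \emph{stable} solution $h(\alpha,t)$ for which Theorems 2 and 3 of \cite{GGS25_SIMA} already provide a fixed analyticity strip $\nu^*\le\nu_0/24$ with a uniform-in-time bound on $\sup_{|\beta|\le\nu^*}|\partial_\alpha h|$, together with a quantitative decay statement. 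It then fixes $T$ and sets $g(\alpha,t):=h(\alpha,T-t)$, which solves the unstable contour equation by time reversibility; the decay bound for $h$, run backwards, becomes the lower bound $\|g(T)\|_{A^0_{\nu^*}}\ge C_1 e^{C_2\sqrt{T}}\|g_0\|_{A^0}$ via Theorem 4 of \cite{GGS25_SIMA}. This is why the theorem is stated for a fixed $T$ and a $T$-dependent family of initial data, and why no differential inequality for $M(t)$ along the unstable flow is ever needed. Without this reversal, your step 4 is not a gap you can reasonably expect to close by sharpening kernel estimates; it is a different and substantially harder problem.

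A secondary point: your bound $F(t)\lesssim(\nu^*)^{-1}\log(M(t)/\mu)$ is an \emph{upper} bound on the zero count, and in the theorem $F(t)$ is by definition that upper bound; the $\sqrt{t}$ statement is a \emph{lower bound on this upper bound}, i.e.\ on the ceiling for the number of fingers, not on the actual number. Your write-up conflates ``showing $\log M(t)\gtrsim\sqrt{t}$'' with ``showing the finger count grows like $\sqrt{t}$,'' and the paper's remark following Theorem~\ref{thm:2} explicitly disclaims the latter. Make sure this distinction is preserved if you redo the argument.
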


It should be noted that we are not able to say anything on the possible relaxation on the number of fingers. Instead we can say that, in the case of maximal growth on the number of fingers, such maximal growth is at least as $\sqrt{t}$.

Our third main result concerns finite-time singular behavior in the stable regime. In particular, we prove that there exist initial smooth curves in the stable stratification of the densities that exhibit a turning instability, that is, the interface fails to be a graph in finite time, and the Rayleigh-Taylor condition breaks down. The proof follows \cite{castro2012rayleigh} but the operators involved in our proof are more regular. This  turning phenomenon is described in the following result: 
\begin{thm} \label{thm:turning}
    Let $\bar{z}(\alpha) = (\bar{z}_1(\alpha),\bar{z}_2(\alpha))$ a curve such that 
    \begin{enumerate}
        \item $\bar{z}_i$ are smooth and have odd symmetry.
        \item $\partial_\alpha \bar{z}_1(\alpha) >0$ for $\alpha \neq 0$, $\partial_\alpha \bar{z}_1(0) =0$ and $\partial_\alpha \bar z_2(0) >0$.
        \item $\partial_\alpha \bar u_1(0) = \partial_\alpha \partial_t \bar{z}_1(0) <0$.
    \end{enumerate}
    Then, if we set $\bar{z}(\alpha)$ as the initial datum for the Contour Dynamics Equation \eqref{eq:cde} in the Rayleigh-Taylor stable case of the densities, the evolution of $\bar{z}(\alpha)$ exhibits a turning instability at $\alpha =0$ in finite time.
    Moreover, we prove that there exists a family of initial data $\bar{z}$ fulfilling the conditions 1-3 and we construct another family of curves that satisfy the extra symmetry assumption:
    \begin{enumerate}
      \setcounter{enumi}{3}
        \item $\bar{z}_i$ are symmetric with respect to $\frac{\pi}{2}$ in the sense of \eqref{evensym}.
    \end{enumerate}
    For the latter family of curves fulfilling conditions 1-4, we prove that the turning instability occurs at $\alpha = 0$ and the endpoints $\alpha = \pm \pi$ due to the even symmetry of $\bar{z}_2$.
\end{thm}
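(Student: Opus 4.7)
The plan is to follow the strategy of \cite{castro2012rayleigh} developed for the Muskat problem, adapted to the Stokes setting, where the integral operator in \eqref{eq:cde} is one degree more regular. The heart of the argument is a short-time Taylor expansion: condition~2 places $\p_\alpha z_1(0,\cdot)$ exactly at zero initially, and condition~3 forces its time derivative to be strictly negative, so that $\p_\alpha z_1(0,t)$ turns negative for small $t>0$, at which point $z(\cdot,t)$ ceases to be a graph near $\alpha=0$ and the Rayleigh--Taylor condition breaks down.

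First, I would show that the odd symmetry of condition~1 (and when applicable the reflection symmetry about $x_1=\pi/2$ of condition~4) is propagated by the flow \eqref{eq:cde}. This follows from uniqueness of sufficiently regular solutions \cite{GGS25}, together with the invariance of the Stokeslet kernel $\calS$ under the reflections $(x_1,x_2)\mapsto(-x_1,-x_2)$ and $(x_1,x_2)\mapsto(\pi-x_1,x_2)$ and the compatibility of the gravity term with them. Consequently $z(0,t)=(0,0)$ for all $t\ge 0$, and differentiating $\p_t z_1=u_1\circ z$ in $\alpha$ yields
\begin{equation*}
\p_t\p_\alpha z_1(0,t)=\p_1 u_1(z(0,t))\,\p_\alpha z_1(0,t)+\p_2 u_1(z(0,t))\,\p_\alpha z_2(0,t).
\end{equation*}
At $t=0$ the first summand vanishes by condition~2, so the right-hand side equals $\p_\alpha\bar u_1(0)$, which is strictly negative by condition~3. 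Continuity of $z(\cdot,t)$ in a smooth topology, a consequence of the global well-posedness in \cite{GGS25}, then ensures $\p_t\p_\alpha z_1(0,t)<0$ on some interval $[0,t_0]$, and integrating gives $\p_\alpha z_1(0,t)<0$ for $t\in(0,t_0]$, which is the desired turning.

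The main obstacle is constructing initial data that actually realize condition~3, since conditions~1--2 constrain only the shape of the curve while condition~3 concerns the sign of a nonlocal integral of $\bar z$ against $\calS$. I would work with an odd ansatz of the form $\bar z_1(\alpha)=\alpha+\lambda f(\alpha)$, $\bar z_2(\alpha)=g(\alpha)$, with smooth odd $f,g$ satisfying $f'(0)=-1$, and tune the amplitude $\lambda$ to the critical value $\lambda_*$ for which $\p_\alpha\bar z_1(0)=0$ while $\p_\alpha\bar z_1(\alpha)>0$ elsewhere. Using the explicit form of the Stokeslet, $\p_\alpha\bar u_1(0)$ reduces to a scalar integral depending on $f$ and $g$; I would establish its negativity either by an asymptotic argument in a small-amplitude regime (where the leading-order behavior is governed by the linearization of \eqref{eq:cde} around a flat reference curve) or by a direct Fourier computation for a specific $(f,g)$ with low-mode support, then extend to an open neighborhood by continuity.

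For the doubly-symmetric family (conditions~1--4), the reflection symmetry about $x_1=\pi/2$ combined with central symmetry forces $\bar z(\pm\pi)=(\pm\pi,0)$, $\p_\alpha\bar z_1(\pm\pi)=\p_\alpha\bar z_1(0)=0$, and $\p_\alpha\bar z_2(\pm\pi)=-\p_\alpha\bar z_2(0)<0$. The same reflection applied to $u$ yields $\p_2 u_1(\pm\pi,0)=-\p_2 u_1(0,0)$ at $t=0$, so
\begin{equation*}
\p_t\p_\alpha z_1(\pm\pi,t)\big|_{t=0}=\p_2 u_1(\pm\pi,0)\,\p_\alpha\bar z_2(\pm\pi)=\p_2 u_1(0,0)\,\p_\alpha\bar z_2(0)=\p_\alpha\bar u_1(0)<0,
\end{equation*}
so the turning mechanism operates identically at $\alpha=\pm\pi$. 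A family fulfilling conditions~1--4 is then obtained by restricting $f,g$ in the above ansatz to Fourier modes compatible with both symmetries, namely even-index sine modes for $f$ and odd-index sine modes for $g$.
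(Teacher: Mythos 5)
Your argument for the turning mechanism itself (propagation of the odd symmetry, the fixed point $z(0,t)=(0,0)$, differentiation of $\partial_t z_1 = u_1\circ z$ in $\alpha$ using the chain rule, and vanishing of the $\partial_1 u_1\cdot\partial_\alpha z_1$ term at $t=0$) matches the paper's proof. The argument at $\alpha=\pm\pi$ under condition~4, via the antisymmetry of $\partial_2 u_1$ under $x_1\mapsto\pi-x_1$, is also correct and parallels the paper's remark.

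The genuine gap is in the construction of initial data satisfying condition~3, which is the crux of the result. Your proposal ("establish negativity either by an asymptotic argument in a small-amplitude regime \ldots or by a direct Fourier computation \ldots then extend by continuity") never actually exhibits a curve for which the sign has been verified, and the two routes you suggest are problematic. A small-amplitude regime makes little sense here: if $\partial_\alpha\bar z_1(0)=0$, the curve is not a graphical perturbation of a flat interface, so there is no obvious linearization to invoke; and in the Rayleigh–Taylor \emph{stable} case a small-amplitude linear analysis would predict smoothing of the interface, which is exactly the opposite tendency you need. The paper sidesteps all of this with a concrete construction borrowed from \cite{castro2012rayleigh}: take $\bar z_1(\alpha)=\alpha-\sin\alpha$ and a piecewise $\bar z_2$ equal to $bz^*(\alpha)$ on $[0,\alpha_2]$ (where $z^*>0$) and to $z^*(\alpha)$ on $[\alpha_2,\pi]$ (where $z^*\le 0$), for a large parameter $b$. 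Reformulating the CDE through $\partial_1 K$ (the fundamental solution of the bi-Laplacian), one splits $\partial_t\partial_\alpha \bar z_1(0,0)=J_1+J_2$, and the $b$-scaling is the engine: $J_1$ is the positive contribution, but the $\cosh(b z^*)$ in the denominator kills it as $b\to\infty$, while $J_2<0$ is independent of $b$. Choosing $b$ large enough then makes $\partial_\alpha \bar u_1(0)<0$. This quantitative mechanism, not an unspecified Fourier or continuity argument, is what makes the construction work, and it is the key idea your proposal is missing. The adaptation to conditions~1--4 requires modifying both $\bar z_1$ and $\bar z_2$ so that the reflection symmetry about $\pi/2$ holds (e.g.\ gluing a smooth transition for $\bar z_1$ past $\alpha_1$, and taking $z^*$ supported in $[-\pi/2,\pi/2]$ with flat transitions near $\pm\pi/2$), after which the same $J_1\to 0$, $J_2<0$ argument closes the proof.
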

\begin{figure}[h]
    \centering
    \includegraphics[width=0.5\textwidth]{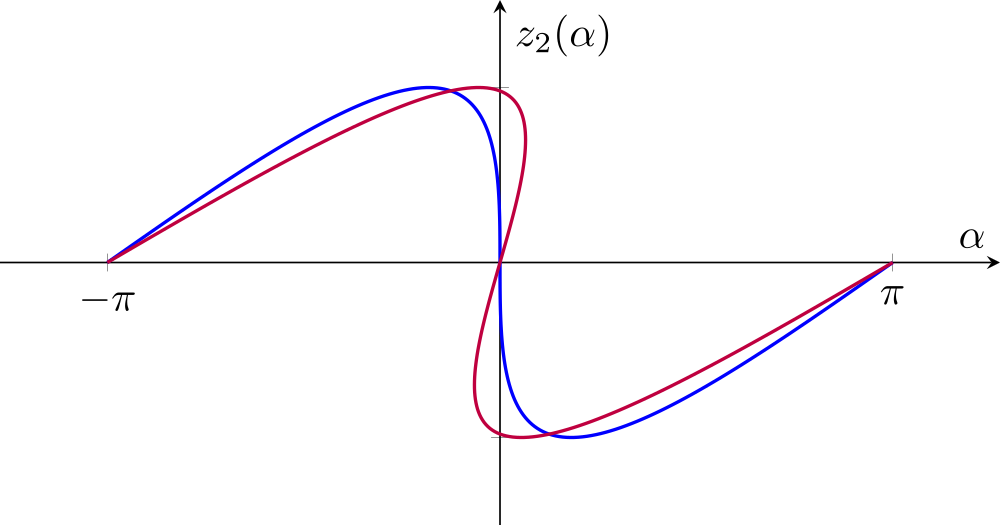}
    \caption{Sketch of the turning instability at $\alpha = 0$. The blue curve represents the initial interface and the red curve represents its evolution into a partially unstable regime.}
\end{figure}
\begin{rmk}
In the above result, we construct a curve that fulfills the symmetry assumptions needed in Theorem \ref{thm:main}. However, we cannot conclude that after the Rayleigh-Taylor condition breaks down, the dynamics turns to the scenario in Theorem \ref{thm:main}. In the latter, we need an infinite mass of the denser fluid on top, while in the turning instability, there is just a finite mass that enters in the unstable regime. It is not clear what the behavior of the fluid will be after the turning instability: it might become stable again (see \cite{CGZ17}) or develop infinite growth similar to the one shown in Theorem \ref{thm:main}.
\end{rmk}
\begin{rmk}
Theorem \ref{thm:turning} also holds for the Rayleigh-Taylor unstable case of the densities. In fact, in the unstable case, the conditions for the initial data to develop a turning instability are expected to be weaker. It is direct to check that the same choice of $\bar{z}_1$ as in the proof of Theorem \ref{thm:turning} and the simple choice $\bar{z}_2(\alpha) = \sin(\alpha)$ is enough to ensure turning in the unstable scenario.
\end{rmk}

We conclude the introduction by laying out the organization of this article. In Section \ref{sec:prelim}, we discuss several important properties concerning the Stokes-Transport system, including a class of symmetries and a potential energy. Section \ref{sec:finiteE} is dedicated to proving Theorem \ref{thm:main} on infinite-in-time growth in the unstable regime. In Section \ref{sec:relaxation}, we show Theorem \ref{thm:2} concerning the fingering phenomena. In Section \ref{sec:RTbreakdown}, we prove Theorem \ref{thm:turning} on Rayleigh-Taylor breakdown in the stable regime. Finally in Section \ref{sec:numerics}, we provide some numerical examples regarding the growth phenomena proved in Theorem \ref{thm:main} and \ref{thm:2}.

\section{Preliminaries on Stokes-Transport System}\label{sec:prelim}
In this section, we introduce several fundamental properties regarding \eqref{eq:cde}. We start with defining an equivalent weak formulation to the contour dynamics \eqref{eq:cde}. We then define a crucial symmetry property which is conserved by the evolution of the Stokes-Transport system that will be important for showing our first main result. We finally conclude this section by introducing a natural (modulated) potential energy tied intimately with the weak formulation and discussing several properties regarding this energy. For the rest of this section, we will always consider initial datum $z_0$ for \eqref{eq:cde} that has spatial regularity $C^{2,\gamma}$, for some $\gamma \in (0,1)$. By \cite[Corollary 1]{GGS25}, the corresponding solution $z(\cdot, t)$ is global in time as a $C^{2,\gamma}$ curve. Finally, we may without loss of generality consider $\rho^+ = 1$ and $\rho^- = -1$.
\subsection{An Equivalent Weak Formulation}
We start with introducing an appropriate weak formulation to the 2-phase Stokes-Transport system \eqref{eq:Stokes}. Let
\begin{equation}\label{defn:rho}
\rho(x,t) := \begin{cases}
    1, & x \in \Omega^+(t)\\
    -1, & x \in \Omega^-(t)
\end{cases},\quad
\rho_0(x) := \begin{cases}
    1, & x \in \Omega^+(0)\\
    -1, & x \in \Omega^-(0)
\end{cases}.
\end{equation}
We introduce the following definition of a weak solution:
\begin{defn}\label{defn:weak}
    We say that the pair $(\rho(x,t),u(x,t))$ is a weak solution to \eqref{eq:Stokes} if $\rho(x,t)$ defined by \eqref{defn:rho} solves the continuity equation
\begin{equation}
    \label{eq:Stokes2mass}
    \begin{split}
        \p_t \rho + u\cdot \nabla \rho = 0,&\quad x \in \T \times \R,\\
        \rho(x,0) = \rho_0(x).
    \end{split}
\end{equation}
in the sense of distributions, where
\begin{equation}\label{eq:Stokes2BS}
u = \nabla^\perp \Delta^{-2}\p_1 \rho.
\end{equation}
Here, $\nabla^\perp = (-\p_2, \p_1)$, and $\Delta^{-2}$ denotes the inverse of the bi-Laplacian operator on the domain $\T \times \R$. 
\end{defn}
We conclude by showing the following equivalence between the above weak formulation and the contour dynamics \eqref{eq:cde}.
\begin{prop}
    \label{prop:equivalence}
    Let $z(x,t)$ be a $C^{k,\gamma}$ solution to \eqref{eq:cde} with $k \in\N$, and $\gamma \in (0,1)$. Then $\rho(x,t)$ defined by \eqref{defn:rho} and $u(x,t)$ defined by \eqref{eq:Stokes2BS} is a weak solution in the sense of Definition \ref{defn:weak}. Conversely, if $(\rho,u)$ is a weak solution in the sense of Definition \ref{defn:weak} with $\Gamma(t)$ a $C^{k,\gamma}$ curve  with $k\in\N$, and $\gamma \in (0,1)$, then $z(x,t)$ is a $C^{k,\gamma}$ solution to \eqref{eq:cde}.
\end{prop}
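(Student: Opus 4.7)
The plan is to prove both directions of the equivalence by identifying a common integral representation for the velocity on the interface. The central object to establish first is that, for $\rho$ defined by \eqref{defn:rho} with a $C^{k,\gamma}$ interface $\Gamma(t) = z(\T, t)$, the velocity $u = \nabla^\perp \Delta^{-2} \p_1 \rho$ admits the representation
\begin{equation*}
u(x,t) = (\rho^- - \rho^+) \int_\T \calS(x - z(\beta,t)) \, \dot z^\perp(\beta,t)\, z_2(\beta,t) \, d\beta, \qquad x \in \T \times \R.
\end{equation*}
Once this is in hand, both implications follow essentially formally. In the forward direction, evaluating at $x = z(\alpha,t)$ recovers the right-hand side of \eqref{eq:cde}, so that the CDE becomes $\p_t z = u(z,t)$; a standard test-function computation then shows that the patch $\rho$ is transported by $u$ in the distributional sense \eqref{eq:Stokes2mass}. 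In the backward direction, the continuity equation forces the interface to have normal velocity $u \cdot n$, and after choosing, e.g., a Lagrangian parameterization issued from a $C^{k,\gamma}$ parameterization of $\Gamma(0)$, one obtains $\p_t z = u(z,t)$, which together with the representation above is exactly \eqref{eq:cde}.

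The core computation is thus the derivation of that representation formula. I would exploit the algebraic identity
\[
(0, \rho(y))^T = \nabla_y\bigl(\rho(y)\, y_2\bigr) - y_2 \nabla_y \rho(y),
\]
whose gradient term is a piecewise-smooth gradient in the bulk of $\Omega^\pm$ and can therefore be absorbed into the pressure of the two-phase Stokes system without altering $u$. The remaining contribution $-y_2 \nabla_y \rho$ is, in the distributional sense, a single-layer source supported on $\Gamma(t)$: using $\nabla_y \rho = (\rho^- - \rho^+)\, n \, d\sigma_\Gamma$ with $n$ the unit normal pointing from $\Omega^-$ to $\Omega^+$, and convolving with the $x_1$-periodic Stokeslet $\calS$, one arrives at
\[
u(x,t) = (\rho^- - \rho^+) \int_{\Gamma(t)} \calS(x - y)\, y_2\, n(y)\, d\sigma(y),
\]
and the identification $n\, d\sigma = \dot z^\perp d\beta$ under the chosen parameterization yields the claimed formula.

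The main obstacle will be justifying these manipulations rigorously. The Stokeslet has a logarithmic singularity on the diagonal and $\calS_{11}$ grows linearly as $|x_2| \to \infty$, so both the absorption of $\nabla_y(\rho\, y_2)$ into the pressure and the integration by parts that converts the bulk integral into a single-layer have to be handled with care, for instance by vertical truncation together with the decay structure of $\nabla \calS$ against the bounded part of $\rho$, or equivalently by invoking classical jump relations for single-layer Stokes potentials. The $C^{k,\gamma}$-regularity of $\Gamma$ ensures that the resulting boundary integral is continuous across $\Gamma$ and that the Stokes system is solved classically off the interface with the transmission conditions in \eqref{eq:Stokes}; in the backward direction, propagation of $C^{k,\gamma}$-regularity for $z$ is inherited from the mapping properties of the Stokeslet integral on $C^{k,\gamma}$ curves established in \cite{GGS25}.
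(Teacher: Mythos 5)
Your argument is essentially the one the paper has in mind (the paper itself simply cites the analogous Proposition~3.2 of C\'ordoba--Gancedo and the derivation of \eqref{eq:cde} carried out in \cite{GGS25_SIMA}): establish the single-layer Stokeslet representation of $u=\nabla^\perp\Delta^{-2}\p_1\rho$ over $\Gamma(t)$ via the decomposition $(0,\rho)^T=\nabla(\rho\,y_2)-y_2\nabla\rho$, evaluate on the curve for the forward direction, and fix a Lagrangian parametrization for the converse, with the analytic caveats you list (logarithmic diagonal singularity, linear growth of $\calS_{11}$ in $x_2$, jump relations for the single-layer potential, and $C^{k,\gamma}$ mapping properties from \cite{GGS25}) being exactly what one must verify to make the adaptation rigorous. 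One small sign to repair in the sketch: the distributional jump is $\nabla\rho=(\rho^+-\rho^-)\,n\,d\sigma_\Gamma$ with $n$ pointing from $\Omega^-$ to $\Omega^+$ (compare \eqref{eq:gradden}), not $(\rho^--\rho^+)$ as written, but you also dropped a compensating minus sign when passing from $-y_2\nabla\rho$ to the convolution, so your final representation formula does agree with \eqref{eq:cde}.
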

The proof of the above proposition follows from a straightforward adaptation of Proposition 3.2 in \cite{cordoba2007contour}. We thus omit the details here.

\begin{rmk}
    Since we work with configurations with sufficiently regular free interface, thanks to Proposition \ref{prop:equivalence}, we will switch frequently between the Contour Dynamics formulation \eqref{eq:cde} and the weak formulation given in Definition \ref{defn:weak}, whichever is more convenient. Thus for the rest of the article, we will use this equivalence without further explanations.
\end{rmk}

\subsection{Symmetries}
This section is dedicated to showing two important symmetries of \eqref{eq:cde} which are conserved under the Stokes-Transport system. The proofs of the conservation of such symmetries are straightforward, but we are not able to pinpoint an exact reference in previous literature. Therefore, we provide detailed proofs here for the sake of completeness. Note that even though our main result concerns $C^{2,\gamma}$ interface, we prove the following statements for general $C^{1,\gamma}$ solutions to achieve the greatest generality.
\begin{prop}[Conservation of Central Symmetry]\label{prop:central}
    Fix $\gamma \in (0,1)$. Suppose that $z_0 \in C^{1,\gamma}$ is the initial datum for \eqref{eq:cde}, and that $z_0(\alpha) = -z_0(- \alpha)$ for any $\alpha \in \T$. Then $z(\alpha, t) = -z(- \alpha, t)$ for arbitrary $t > 0$, where $z(\cdot, t)$ is the (global) solution to \eqref{eq:cde} with initial datum $z_0$.
\end{prop}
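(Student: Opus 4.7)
The natural approach is to reduce the statement to uniqueness of solutions to \eqref{eq:cde} (which is available from \cite{GGS25} in the $C^{1,\gamma}$ class), by exhibiting the reflected curve as another solution with the same initial datum.

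Concretely, I would set $w(\alpha,t) := -z(-\alpha,t)$ and aim to show that $w$ solves \eqref{eq:cde} pointwise in $\alpha$. The initial condition is immediate: $w(\alpha,0)=-z_0(-\alpha)=z_0(\alpha)$ by the assumed central symmetry of $z_0$. The chain rule gives
\[
\dot w(\beta,t)=\dot z(-\beta,t),\qquad w_2(\beta,t)=-z_2(-\beta,t),\qquad \partial_t w(\alpha,t)=-\partial_t z(-\alpha,t),
\]
and hence $\dot w^\perp(\beta,t)=\dot z^\perp(-\beta,t)$.

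Next I would record the elementary parity of the Stokeslet kernel: directly from the explicit formula given in the introduction, one checks that $\calS(-x_1,-x_2)=\calS(x_1,x_2)$, since the logarithmic part is even in each variable, while the prefactor $-x_2/(8\pi(\cosh x_2-\cos x_1))$ flips sign under $x\mapsto -x$ and the matrix entries $-\sinh x_2,\sin x_1,\sinh x_2$ also flip sign, producing two compensating minus signs. With this in hand, plugging $w$ into the right-hand side of \eqref{eq:cde}, applying $\calS(-\,\cdot\,)=\calS(\,\cdot\,)$ to rewrite the kernel as $\calS(z(-\alpha,t)-z(-\beta,t))$, and then performing the change of variables $\beta\mapsto -\beta$ on $\mathbb T$, the integrand becomes
\[
\calS(z(-\alpha,t)-z(\beta',t))\,\dot z^\perp(\beta',t)\,\bigl(-z_2(\beta',t)\bigr),
\]
so the right-hand side equals $-\partial_t z(-\alpha,t)=\partial_t w(\alpha,t)$, as required. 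This verifies that $w$ satisfies \eqref{eq:cde} with the same initial datum as $z$.

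The conclusion then follows from the global well-posedness/uniqueness of $C^{1,\gamma}$ solutions to \eqref{eq:cde} cited from \cite{GGS25}: $w(\alpha,t)=z(\alpha,t)$ for all $t>0$, which is precisely the claimed central symmetry. The only genuinely delicate point is bookkeeping the three different sign contributions — the outer negation in the definition of $w$, the chain-rule sign from differentiating $z(-\beta,t)$ in $\beta$, and the substitution $\beta\mapsto -\beta$ — and confirming that the Jacobian $w_2(\beta,t)=-z_2(-\beta,t)$ supplies exactly the remaining minus sign needed for the identity; everything else is mechanical once $\calS(-x)=\calS(x)$ is noted.
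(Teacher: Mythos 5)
Your proposal is correct and takes essentially the same route as the paper: define the reflected curve $w(\alpha,t)=-z(-\alpha,t)$, verify it solves \eqref{eq:cde} with the same initial datum using the evenness $\calS(-x)=\calS(x)$ and the substitution $\beta\mapsto-\beta$, and then invoke uniqueness from \cite{GGS25}. The paper writes the computation starting from $\partial_t\tilde z$ and transforming to the CDE right-hand side rather than the reverse, but the sign bookkeeping and key ingredients are identical.
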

\begin{proof}
    Consider $\tilde{z}(\alpha, t) := -z(-\alpha, t)$. In view of the uniqueness statement in \cite[Theorem 1]{GGS25}, the proof is completed once we show that $\tilde{z}$ is also a solution to \eqref{eq:cde} with initial datum $z_0$. Since
    $$
    \p_t \tilde{z}(\alpha, t) = -\p_t z(- \alpha, t),\quad \dot{\tilde{z}}(\alpha, t) = \dotz(- \alpha, t),
    $$
    we compute that
    \begin{align*}
        \p_t \tilde{z}(\alpha, t) &= 2\int_\T \calS(z(- \alpha, t) - z(\beta, t))\cdot \dotz^\perp(\beta, t)z_2(\beta,t) d\beta\\
        &= 2\int_\T \calS(z( - \alpha, t) - z( - \beta, t))\cdot \dotz^\perp( - \beta, t)z_2(- \beta,t) d\beta\\
        &= -2\int_\T \calS(-\tilde{z}(\alpha, t) + \tilde{z}(\beta, t))\cdot \dot{\tilde{z}}^\perp(\beta, t)\tilde{z}_2(\beta,t) d\beta\\
        &= -2\int_\T \calS(\tilde{z}(\alpha, t) - \tilde{z}(\beta, t))\cdot \dot{\tilde{z}}^\perp(\beta, t)\tilde{z}_2(\beta,t) d\beta,
    \end{align*}
    where we used the fact that $\calS(z) = \calS(-z)$ in the final equality. The proof is thus completed.
\end{proof}
Next, we show that if we further impose the following \textit{even symmetry} to the initial datum, namely for $z_0 = (z_{0,1}, z_{0,2})$:
\begin{equation}\label{evensym}
\begin{split}
    z_{0,1}(\alpha) &= -\pi - z_{0,1}(-\pi - \alpha),\quad z_{0,2}(\alpha) = z_{0,2}(-\pi - \alpha),\quad \alpha \in (-\pi,0],\\
    z_{0,1}(\alpha) &= \pi - z_{0,1}(\pi - \alpha),\quad z_{0,2}(\alpha) = z_{0,2}(\pi - \alpha),\quad \alpha \in (0,\pi],
\end{split}
\end{equation}
this symmetry will also be preserved along the evolution. In particular, we show the following Proposition:
\begin{prop}[Conservation of Central + Even Symmetry]
    \label{prop:even}
    Suppose $z(x,t)$ is the solution to \eqref{eq:cde} with initial data $z_0 \in C^{1,\gamma}$, $\gamma \in (0,1)$. Assume further that $z_0$ enjoys both central symmetry and \eqref{evensym}. Then for any $t \ge 0$,
\begin{equation}\label{evensym1}
\begin{split}
    z_{1}(\alpha,t) &= -\pi - z_{1}(-\pi - \alpha,t),\quad z_{2}(\alpha,t) = z_{2}(-\pi - \alpha,t),\quad \alpha \in (-\pi,0],\\
    z_{1}(\alpha,t) &= \pi - z_{1}(\pi - \alpha,t),\quad z_{2}(\alpha,t) = z_{2}(\pi - \alpha,t),\quad \alpha \in (0,\pi].
\end{split}
\end{equation}
\end{prop}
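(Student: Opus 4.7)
The plan is to mirror the strategy of Proposition \ref{prop:central}: construct a candidate solution via the reflection implied by \eqref{evensym1}, verify it satisfies \eqref{eq:cde} with the same initial datum as $z$, and invoke uniqueness from \cite[Theorem 1]{GGS25}. Since central symmetry is already preserved by Proposition \ref{prop:central}, it suffices to establish the even symmetry on $(0,\pi]$; the identities on $(-\pi,0]$ then follow by composing with central symmetry.

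The key ingredient is a reflection identity for the Stokeslet kernel. A direct inspection of the explicit formula for $\calS$ shows that the scalar log piece is even in $x_1$, the prefactor $x_2/(8\pi(\cosh x_2-\cos x_1))$ is even in $x_1$, while the matrix $\bigl(\begin{smallmatrix}-\sinh x_2 & \sin x_1 \\ \sin x_1 & \sinh x_2\end{smallmatrix}\bigr)$ has odd off-diagonal and even diagonal under $x_1\mapsto -x_1$. Setting $A:=\mathrm{diag}(1,-1)$, this yields
\begin{equation*}
\calS(-x_1,x_2)=A\,\calS(x_1,x_2)\,A.
\end{equation*}
This is the structural identity that will compensate for the sign flip introduced by reflecting the first coordinate.

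Next, define the candidate
\begin{equation*}
\tilde z(\alpha,t):=\bigl(\pi-z_1(\pi-\alpha,t),\,z_2(\pi-\alpha,t)\bigr),\qquad \alpha\in\T,
\end{equation*}
where periodicity of the curve ($z_1(\alpha+2\pi,t)=z_1(\alpha,t)+2\pi$, $z_2$ genuinely periodic) keeps $\tilde z$ consistent on the torus. The hypothesis \eqref{evensym} on $z_0$ gives $\tilde z(\alpha,0)=z_0(\alpha)$. I would then differentiate both sides in $t$ and $\alpha$: one finds $\p_t\tilde z=( -\p_t z_1,\p_t z_2)\circ(\pi-\alpha)=-A\,\p_t z(\pi-\alpha,t)$ and $\dot{\tilde z}(\alpha,t)=(\dot z_1,-\dot z_2)(\pi-\alpha,t)$, so that $\dot{\tilde z}^\perp(\beta,t)=(\dot z_2(\pi-\beta,t),\dot z_1(\pi-\beta,t))$. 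Plugging $\tilde z$ into the right-hand side of \eqref{eq:cde}, performing the change of variables $\beta\mapsto\pi-\beta$, and invoking the kernel identity, the integrand becomes
\begin{equation*}
A\,\calS(z(\pi-\alpha,t)-z(\beta',t))\,A\,(\dot z_2,\dot z_1)^T(\beta',t)\,z_2(\beta',t).
\end{equation*}
A short computation shows $A(\dot z_2,\dot z_1)^T=(\dot z_2,-\dot z_1)^T=-\dot z^\perp(\beta',t)$, which pulls an overall $-A$ out of the integral and produces exactly $-A\,\p_t z(\pi-\alpha,t)=\p_t\tilde z(\alpha,t)$. Hence $\tilde z$ solves \eqref{eq:cde}.

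Uniqueness of $C^{1,\gamma}$ solutions then forces $\tilde z\equiv z$, i.e., the second line of \eqref{evensym1}. For $\alpha\in(-\pi,0]$, applying this identity to $-\alpha\in[0,\pi)$ and combining with central symmetry from Proposition \ref{prop:central} (using also $z(\pi+\alpha,t)=-z(-\pi-\alpha,t)$) recovers the first line of \eqref{evensym1}. The main obstacle I anticipate is simply bookkeeping: carefully tracking sign flips in $\calS$, in $\dot z^\perp$, and in the change-of-variables Jacobian, and checking that the reflection is well-defined on the torus given that $z_1$ is not strictly periodic but satisfies $z_1(\alpha+2\pi,t)=z_1(\alpha,t)+2\pi$. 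None of this is conceptually hard, but a single sign error would break the argument, so the verification of the kernel identity and the chain of reductions deserves to be written out explicitly.
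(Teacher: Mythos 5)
Your proposal is correct and follows the same strategy as the paper's proof: build the reflected candidate, check it solves \eqref{eq:cde} with the same initial datum, and invoke uniqueness from \cite[Theorem 1]{GGS25}. The one improvement over the paper's presentation is that you isolate the clean kernel identity $\calS(-x_1,x_2)=A\,\calS(x_1,x_2)\,A$ with $A=\mathrm{diag}(1,-1)$ and exploit the quasi-periodicity $z_1(\alpha+2\pi,t)=z_1(\alpha,t)+2\pi$ to define the candidate by a single formula $\tilde z(\alpha)=(\pi,0)^T-A\,z(\pi-\alpha)$ on all of $\T$, whereas the paper works piecewise on $(-\pi,0]$ and $(0,\pi]$, writes out the four components $\calS_{11},\dots,\calS_{22}$ and does the change of variables separately in each regime; your version collapses that to one line. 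A minor remark: once $\tilde z\equiv z$ is established, both lines of \eqref{evensym1} follow immediately from the quasi-periodicity of $z_1$ and the genuine periodicity of $z_2$ — the extra composition with central symmetry you suggest for the $(-\pi,0]$ branch is harmless but unnecessary.
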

\begin{proof}
    For the rest of the proof, we suppress the temporal variable $t$ for the sake of clarity. Let $z(\alpha)$ be the solution initiated by data $z_0$, and define $\bar{z}(\alpha)$ by
    $$
    \bar{z}_1(\alpha) = \begin{cases}
        -\pi - z_1(-\pi - \alpha),&\alpha \in (-\pi,0],\\
        \pi - z_1(\pi-\alpha),&\alpha \in (0,\pi],
    \end{cases}
    $$
    and
    $$
    \bar{z}_2(\alpha) = \begin{cases}
        z_2(-\pi - \alpha),&\alpha \in (-\pi,0],\\
        z_2(\pi-\alpha),&\alpha \in (0,\pi].
    \end{cases}
    $$
    Similar to the proof of Proposition \ref{prop:central}, it suffices to show that $\bar{z}$ solves the CDE \eqref{eq:cde}. If $\alpha \in (-\pi,0]$, we have
    \begin{align*}
    \p_t \bar{z}_1(\alpha) &= -\p_tz_1(-\pi-\alpha) = \frac{1}{4\pi}\int_\T \left[\calS_{11}(-\pi-\alpha,\beta;z)(-\dotz_2(\beta)) + \calS_{12}(-\pi -\alpha,\beta;z)\dotz_1(\beta)\right]z_2(\beta)d\beta\\
    &= \frac{1}{4\pi}\left(\int_{-\pi}^0 + \int_0^\pi \right)\left[\calS_{11}(-\pi-\alpha,\beta;z)(-\dotz_2(\beta)) + \calS_{12}(-\pi -\alpha,\beta;z)\dotz_1(\beta)\right]z_2(\beta)d\beta\\
    &= \frac{1}{4\pi}(I_1 + I_2),
    \end{align*}
    where
    \begin{align*}
    \calS_{11}(\alpha,\beta;z) = \log\left(2(\cosh(z_2(\alpha) - z_2(\beta))- \cos(z_1(\alpha) - z_1(\beta)))\right)\\
    + \frac{(z_2(\alpha) - z_2(\beta))\sinh(z_2(\alpha) - z_2(\beta))}{\cosh(z_2(\alpha) - z_2(\beta)) - \cos(z_1(\alpha) - z_1(\beta))},
    \end{align*}
    $$
    \calS_{12}(\alpha,\beta;z) = - \frac{(z_2(\alpha)-z_2(\beta))\sin(z_1(\alpha) - z_1(\beta))}{\cosh(z_2(\alpha) - z_2(\beta)) - \cos(z_1(\alpha) - z_1(\beta))}.
    $$
    Applying the change of variable $\bar\beta = -\pi - \beta$ and that
    $$
    z_1(-\pi-\alpha) - z_1(-\pi - \bar\beta) = \bar{z}_1(\bar\beta) - \bar{z}_1(\alpha),\quad
    z_2(-\pi - \alpha) - z_2(-\pi - \bar\beta) = \bar{z}_2(\alpha) - \bar{z}_2(\bar\beta),
    $$
    $$
    \p_{\bar\beta}\bar{z}_1(\bar\beta) = \p_\beta z_1(\beta),\quad \p_{\bar\beta}\bar{z}_2(\bar\beta) = -\p_\beta z_2(\beta),
    $$
    we have
    \begin{align*}
        I_1 &= -\int_{-\pi}^0 \left[ \calS_{11}(\alpha,\bar\beta;\bar{z})(-\p_{\bar\beta}\bar{z}_2(\bar\beta)) + \calS_{12}(\alpha,\bar\beta;\bar{z})\p_{\bar\beta}\bar{z}_1(\bar\beta) \right] \bar{z}_2(\bar\beta) d\bar\beta.
    \end{align*}
    A similar argument yields
    $$
    I_2 = -\int_0^\pi \left[ \calS_{11}(\alpha,\bar\beta;\bar{z})(-\p_{\bar\beta}\bar{z}_2(\bar\beta)) + \calS_{12}(\alpha,\bar\beta;\bar{z})\p_{\bar\beta}\bar{z}_1(\bar\beta) \right] \bar{z}_2(\bar\beta) d\bar\beta.
    $$
    Combining the computations above, we arrive at
    \begin{equation}\label{eq:evenaux1}
    \p_t\bar{z}_1(\alpha) = -\frac{1}{4\pi}\int_\T \left[ \calS_{11}(\alpha,\bar\beta;\bar{z})(-\p_{\bar\beta}\bar{z}_2(\bar\beta)) + \calS_{12}(\alpha,\bar\beta;\bar{z})\p_{\bar\beta}\bar{z}_1(\bar\beta) \right] \bar{z}_2(\bar\beta) d\bar\beta.
    \end{equation}
    Proceeding in a similar fashion, we also have
    \begin{equation}\label{eq:evenaux2}
    \p_t \bar{z}_2(\alpha) = -\frac{1}{4\pi}\int_\T \left[ \calS_{21}(\alpha,\bar\beta;\bar{z})(-\p_{\bar\beta}\bar{z}_2(\bar\beta)) + \calS_{22}(\alpha,\bar\beta;\bar{z})\p_{\bar\beta}\bar{z}_1(\bar\beta) \right] \bar{z}_2(\bar\beta) d\bar\beta,
    \end{equation}
    where
    $$
    \calS_{21}(\alpha,\beta;z) = - \frac{(z_2(\alpha)-z_2(\beta))\sin(z_1(\alpha) - z_1(\beta))}{\cosh(z_2(\alpha) - z_2(\beta)) - \cos(z_1(\alpha) - z_1(\beta))},
    $$
    \begin{align*}
    \calS_{22}(\alpha,\beta;z) = \log\left(2(\cosh(z_2(\alpha) - z_2(\beta))- \cos(z_1(\alpha) - z_1(\beta)))\right)\\
    - \frac{(z_2(\alpha) - z_2(\beta))\sinh(z_2(\alpha) - z_2(\beta))}{\cosh(z_2(\alpha) - z_2(\beta)) - \cos(z_1(\alpha) - z_1(\beta))}.
    \end{align*}
    From \eqref{eq:evenaux1}, \eqref{eq:evenaux2}, we conclude that $\bar{z}$ satisfies \eqref{eq:cde} for $\alpha \in (-\pi,0]$. An almost identical argument would verify the case when $\alpha \in (0,\pi]$. We omit the tedious details for simplicity.
\end{proof}

With the two symmetry properties established above, we introduce the following terminology:
\begin{defn}\label{defn:odd+even}
    Suppose $z(\alpha,t)$ is the solution to \eqref{eq:cde} with initial data $z_0 \in C^{1,\gamma}$ verifying $z(\alpha,t) = -z(-\alpha,t)$ and \eqref{evensym1}, then we say that $z(\alpha,t)$ satisfies \textbf{central+even symmetry}.
\end{defn}

The following corollary is immediate given Proposition \ref{prop:even}.
\begin{cor}\label{cor:positions}
    Suppose $z(\alpha,t)$ is the solution to \eqref{eq:cde} with initial data $z_0 \in C^{1,\gamma}$ possessing central+even symmetry. Then
    $$
    z_1(\pm\pi,t) = \pm\pi, \quad z_1\left(\pm\frac{\pi}{2},t\right) = \pm\frac{\pi}{2}, 
    $$
    $$
    z_2(\pm\pi,t) = 0, \quad \quad z_2\left(\frac{\pi}{2},t\right) = -z_2\left(-\frac{\pi}{2},t\right).
    $$
\end{cor}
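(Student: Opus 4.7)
The plan is to read off each identity as a direct consequence of Proposition \ref{prop:even} (which establishes both the central symmetry $z(\alpha,t)=-z(-\alpha,t)$ and the reflection relations \eqref{evensym1}), evaluating them at the special points $\alpha=0,\pm\pi/2,\pm\pi$. No dynamics beyond these algebraic identities is needed, since they are preserved along the flow.

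First I would deduce the base values at $\alpha=0$. Central symmetry gives $z_1(0,t)=-z_1(0,t)$ and $z_2(0,t)=-z_2(0,t)$, hence $z(0,t)=(0,0)$. Next, to get $z(\pm\pi,t)$, I apply \eqref{evensym1} at $\alpha=\pi$: the second line yields $z_1(\pi,t)=\pi-z_1(0,t)=\pi$ and $z_2(\pi,t)=z_2(0,t)=0$. Central symmetry (together with the $2\pi$-periodicity of the parametrization in the $x_1$-direction modulo the $2\pi$-shift of the first component) then delivers $z_1(-\pi,t)=-\pi$ and $z_2(-\pi,t)=0$.

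For the half-period values, I would again apply \eqref{evensym1} at $\alpha=\pi/2$: the second line gives $z_1(\pi/2,t)=\pi-z_1(\pi/2,t)$, so $z_1(\pi/2,t)=\pi/2$. Central symmetry then yields $z_1(-\pi/2,t)=-z_1(\pi/2,t)=-\pi/2$. Finally, the identity $z_2(\pi/2,t)=-z_2(-\pi/2,t)$ is just the second component of the central symmetry relation evaluated at $\alpha=\pi/2$.

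There is no real obstacle here: the corollary is a straightforward bookkeeping consequence of Proposition \ref{prop:even}, and the only minor care needed is to verify that the relations in \eqref{evensym1} are applied on the correct branch (i.e. the second line for $\alpha\in(0,\pi]$ and the first line for $\alpha\in[-\pi,0)$), and that the interior evaluations at $\alpha=0$ are consistent with both limits coming from \eqref{evensym1}, which they are because $z_1(0,t)=0$ and $z_2(0,t)=0$ are already forced by central symmetry.
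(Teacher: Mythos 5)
Your proof is correct and follows exactly the route the paper has in mind: the corollary is stated as "immediate" from Proposition~\ref{prop:even}, and your argument is the natural unpacking of that remark by evaluating the central-symmetry and even-symmetry identities at $\alpha=0,\pm\pi/2,\pm\pi$. The only point worth flagging is the endpoint $\alpha=\pm\pi$ (which sits on the boundary of the branches in \eqref{evensym1} and of $\T=[-\pi,\pi)$), but your appeal to continuity together with the $2\pi$-periodic identification in the $x_1$-component handles this correctly, matching the intended reading.
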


\begin{rmk}
    As a straightforward consequence of Proposition \ref{prop:even}, if we assume that $z_0$ satisfies both central+even symmetry in the sense of \eqref{evensym1}, $\rho(x,t)$ necessarily satisfies the following symmetry:
    \begin{equation}
        \label{rhosym}
        \begin{split}
            \rho(-\pi-x_1,x_2,t) = \rho(x_1,x_2,t),\quad (x_1,x_2) \in (-\pi,0]\times \R,\\
             \rho(\pi-x_1,x_2,t) = \rho(x_1,x_2,t),\quad (x_1,x_2) \in (0,\pi]\times \R.
        \end{split}
    \end{equation}
    For the rest of this paper, we will interchangeably use \eqref{rhosym} and \eqref{evensym1}, whichever is more convenient.
\end{rmk}

\subsection{Evolution of the Potential Energy}
\label{sec_energy}
This section aims to study the following (modulated) potential energy, namely:
$$
E(t) := \int_{\T \times \R} x_2 (\rho_s(x_2) - \rho(x,t))dx,
$$
where we recall that
$$
\rho_s(x_2) = \begin{cases}
    1, & x_2 \ge 0,\\
    -1, & x_2 < 0,
\end{cases}
$$
is the unstably stratified state. 
A key observation is that $E(t)$ is monotone increasing in time:
\begin{lem}\label{lem:dtE}
    Assuming that $(\rho,u)$ is the solution to \eqref{eq:Stokes2mass}--\eqref{eq:Stokes2BS} with initial datum $\rho_0$, the following estimate holds:
    \begin{equation}\label{eq:potential}
    E'(t) = \|(-\Delta)^{-1}\p_1 \rho(t)\|_{L^2}^2 =: \delta(t).
    \end{equation}
\end{lem}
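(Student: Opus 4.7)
My plan is to differentiate $E(t)$ in time, use the transport equation satisfied by $\rho$, and then carefully integrate by parts to recognize the resulting expression as an $L^2$ norm via the stream function representation.

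First I would write
\[
E'(t) = -\int_{\T\times\R} x_2 \, \p_t\rho(x,t)\, dx = \int_{\T\times\R} x_2 \, u\cdot\nabla \rho \, dx,
\]
using the transport equation \eqref{eq:Stokes2mass}. The main delicate point here is that $\rho$ does not decay at infinity, so naive integration by parts would produce divergent boundary terms. The remedy is exactly the reason for introducing the modulated energy: writing $\tilde\rho := \rho - \rho_s$, the difference $\tilde\rho$ vanishes outside a horizontal strip (since the interface $\Gamma(t)$ is a compact perturbation of $\{x_2=0\}$ in the vertical direction). Then I would split $\nabla\rho = \nabla\tilde\rho + \nabla\rho_s$ and handle the two contributions separately. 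The piece containing $\nabla\rho_s$ produces a distribution supported on $\{x_2=0\}$, and when multiplied by $x_2$ gives $0$; the piece containing $\nabla\tilde\rho$ can be integrated by parts legitimately, using $\nabla\cdot u = 0$ and the compact vertical support of $\tilde\rho$, to yield
\[
E'(t) = -\int_{\T\times\R} \tilde\rho \, u_2 \, dx.
\]

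Next I would invoke the stream function representation \eqref{eq:Stokes2BS}. Since $u = \nabla^\perp \Delta^{-2}\p_1\rho$, the vertical component is $u_2 = \p_1^2 \Delta^{-2}\rho$, and since $\p_1\rho_s = 0$, this equals $\p_1^2\Delta^{-2}\tilde\rho$. Substituting,
\[
E'(t) = -\int_{\T\times\R} \tilde\rho \, \p_1^2 \Delta^{-2}\tilde\rho \, dx.
\]
Now setting $\phi := \Delta^{-1}\tilde\rho$ (well-defined because $\tilde\rho$ has zero mean, or at least lies in a class where the inverse Laplacian is defined with appropriate decay), integration by parts in $\p_1$ twice and self-adjointness of $\Delta^{-1}$ give
\[
E'(t) = \int_{\T\times\R} (\p_1 \phi)^2 \, dx = \|\p_1 \Delta^{-1}\tilde\rho\|_{L^2}^2 = \|(-\Delta)^{-1}\p_1 \rho\|_{L^2}^2,
\]
where in the last equality I again used $\p_1\rho_s=0$ to replace $\tilde\rho$ by $\rho$ inside $\p_1$.

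The step I expect to require the most care is the justification of integration by parts and the handling of the inverse Laplacian on functions like $\tilde\rho$ that are only compactly supported in $x_2$ but periodic in $x_1$. One clean way would be to introduce a smooth vertical cutoff $\chi_R(x_2)$, perform all manipulations on $\chi_R\tilde\rho$, and pass to the limit $R\to\infty$ using that $\tilde\rho$ has bounded vertical support and that the Stokeslet/biharmonic kernels on $\T\times\R$ decay sufficiently fast in $x_2$ to make every integrated-by-parts boundary term vanish. Once this is done, all the formal manipulations above are rigorous and the identity \eqref{eq:potential} follows directly.
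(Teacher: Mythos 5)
Your proposal is correct and follows essentially the same chain of identities as the paper: differentiate $E$, use the transport equation, integrate by parts with $\nabla\cdot u=0$ to get $-\int u_2\rho\,dx$, substitute $u_2=\partial_1^2\Delta^{-2}\rho$, and use self-adjointness of the inverse Laplacian to recognize $\|(-\Delta)^{-1}\partial_1\rho\|_{L^2}^2$. The one place where you go beyond the paper is in flagging and handling the lack of decay of $\rho$: you replace $\rho$ by $\tilde\rho=\rho-\rho_s$ (supported in a horizontal strip), observe that the $\nabla\rho_s$ contribution is killed by the factor $x_2$, and propose a vertical cutoff to legitimize the integrations by parts — this is a valid and welcome extra layer of rigor (note that the paper's intermediate expression $-\int u_2\rho\,dx$ is only conditionally convergent, and is effectively justified because $\int_{\T}u_2\,dx_1=0$ for each $x_2$), but the underlying computation is the same.
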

\begin{proof}
    Using formulation \eqref{eq:Stokes2mass}--\eqref{eq:Stokes2BS}, we compute that
    \begin{align*}
        E'(t) &= -\int_{\T \times \R}x_2 \p_t \rho(x,t) dx = \int_{\T \times \R} x_2 (u\cdot\nabla \rho)dx= -\int_{\T \times \R} u_2 \rho dx\\ &= -\int_{\T \times \R} \rho \p_1\Delta^{-2}\p_1 \rho dx= \int_{\T \times \R} \p_1\rho \Delta^{-2}\p_1 \rho dx= \|(-\Delta)^{-1}\p_1 \rho\|_{L^2}^2,
    \end{align*}
    where we also used that $(-\Delta)^{-1}$ is a symmetric operator.
\end{proof}

Moreover, the potential energy $E(t)$ can be used to estimate the heights of both the highest and lowest points of the interface $\Gamma(t)$ at any fixed time instance $t \ge 0$. To be more precise, let us define
$$
M(t) := \sup_{(x_1,x_2) \in \Gamma(t)}\{x_2\},\quad -m(t) := \inf_{(x_1,x_2) \in \Gamma(t)}\{x_2\}. 
$$
Then we have the following lemma:
\begin{lem}\label{lem:perimeterlb}
    Assuming that $(\rho,u)$ is the solution to \eqref{eq:Stokes2mass}--\eqref{eq:Stokes2BS} with initial datum $\rho_0$, the following lower bound holds:
    $$
    \max\{M(t), m(t)\} \ge \frac{1}{2\sqrt{\pi}}E(t)^{1/2}.
    $$
\end{lem}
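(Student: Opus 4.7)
The plan is to exploit the fact that the integrand of $E(t)$ is pointwise non-negative and supported on a thin strip whose height is controlled by $M(t)$ and $m(t)$. First I would observe that since $\rho(x,t), \rho_s(x_2) \in \{-1,1\}$, the product $x_2(\rho_s(x_2) - \rho(x,t))$ vanishes whenever $\rho = \rho_s$, takes the value $2x_2 \ge 0$ when $x_2 > 0$ and $\rho = -1$, and the value $-2x_2 \ge 0$ when $x_2 < 0$ and $\rho = 1$. In particular,
\begin{equation*}
E(t) = 2\int_{A^+(t)} x_2 \, dx + 2\int_{A^-(t)} (-x_2) \, dx,
\end{equation*}
where $A^+(t) := \{x \in \T \times \R_+ : \rho(x,t) = -1\}$ and $A^-(t) := \{x \in \T \times \R_- : \rho(x,t) = 1\}$.

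The next step is the topological observation that $A^+(t) \subset \T \times (0, M(t)]$ and $A^-(t) \subset \T \times [-m(t), 0)$. Indeed, since $\Omega^+(t)$ is the region containing the upper end $\{x_2 \to +\infty\}$ and $\Omega^-(t)$ contains $\{x_2 \to -\infty\}$, any vertical ray starting at a point in $\Omega^-(t)$ with $x_2 > M(t)$ would never cross $\Gamma(t)$ on its way up, contradicting that it lies in $\Omega^-(t)$. The same argument gives the bound for $A^-(t)$.

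With the two inclusions above, a direct computation using Fubini (noting that each horizontal slice of $A^\pm(t)$ has $\T$-measure at most $2\pi$) yields
\begin{equation*}
\int_{A^+(t)} x_2 \, dx \le \int_0^{M(t)} 2\pi \, x_2 \, dx_2 = \pi M(t)^2, \qquad \int_{A^-(t)} (-x_2) \, dx \le \pi m(t)^2.
\end{equation*}
Combining these with the formula for $E(t)$ gives $E(t) \le 2\pi(M(t)^2 + m(t)^2) \le 4\pi \max\{M(t), m(t)\}^2$, which rearranges to the desired lower bound.

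The only delicate point in this plan is making the topological claim $A^\pm(t) \subset \T \times [\mp M(t), \pm M(t)]$ (with appropriate signs) rigorous, since $\Gamma(t)$ is a priori not a graph. However, since $\Gamma(t)$ is a closed $C^{2,\gamma}$ curve dividing $\T \times \R$ into two connected components $\Omega^\pm(t)$ distinguished by their behavior at $\pm \infty$, the vertical-ray argument above suffices. Apart from this, the proof is essentially a one-line energy comparison and should not present further obstacles.
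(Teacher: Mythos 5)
Your proof is correct and takes essentially the same approach as the paper: the decomposition of $E(t)$ over $\Omega^- \cap \{x_2>0\}$ and $\Omega^+ \cap \{x_2<0\}$, the observation that these sets lie in the strips $\T\times(0,M(t)]$ and $\T\times[-m(t),0)$, and the Fubini estimate are all identical. The only (cosmetic) difference is that the paper splits into cases depending on which of the two pieces carries at least half of $E(t)$, whereas you combine them directly via $M^2+m^2 \le 2\max\{M,m\}^2$; both yield the same constant.
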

\begin{proof}
    By definitions of $\rho_s$ and $\rho(x,t)$, we first observe that
    $$
    E(t) = 2\int_{\Omega^-(t) \cap \{x_2 > 0\}} x_2 dx - 2\int_{\Omega^{+}(t) \cap \{x_2 < 0\}} x_2 dx =: E^+(t) + E^{-}(t).
    $$
    Clearly, both $E^+$ and $E^-$ are nonnegative. Then let us first suppose that $E^+(t) \ge \frac12 E(t)$. By definition of $M(t)$, we know that $\Omega^- \subset \T \times (-\infty, M(t))$. Based on this observation, we have that
    \begin{align*}
        E^+(t) &\le 2\int_0^{M(t)}\int_\T x_2 dx_1 dx_2 = 2\pi M(t)^2,
    \end{align*}
    from which we obtain
    $$
    M(t) \ge \frac{1}{\sqrt{2\pi}}E^+(t)^{1/2} \ge \frac{1}{2\sqrt{\pi}}E(t)^{1/2}.
    $$
    Supposing otherwise, i.e. $E^-(t) \ge \frac12 E(t)$, a similar argument yields
    $$
    m(t) \ge \frac{1}{2\sqrt{\pi}}E(t)^{1/2}.
    $$
    Hence, the proof is completed.
\end{proof}
\begin{rmk}
    We remark that Lemma \ref{lem:dtE} and Lemma \ref{lem:perimeterlb} hold for solutions to \eqref{eq:cde} (equivalently \eqref{eq:Stokes2mass}--\eqref{eq:Stokes2BS}) without any symmetry assumptions.
\end{rmk}
 
\section{Large Growth in Some Geometric Quantities of the Interface}\label{sec:finiteE}
In this section, we prove Theorem \ref{thm:main}. Let us first outline the strategy of proving the main result.

\subsection{Outline of the Proof Strategy}
\label{sec:organization}
Consider the initial datum $z_0$ that assumes the central+even symmetry. We may further assume that $E(0) = 1$, where the general case would follow similarly by scaling. Our plan is to discuss a dichotomy on how fast the perimeter $L(t)$ grows. Let $\alpha \in (0,1), \beta > 0$ be parameters to be determined later. We split our proof into discussions of the following two cases:

\medskip
\noindent\textbf{Case 1: $\limsup_{t \to \infty} t^{-\alpha}L(t) > \beta$.} 

In this case, we immediately have
$$
\limsup_{t\to\infty}t^{-\alpha+\eps}L(t) = \infty
$$
for any $\eps > 0$.

\medskip
\noindent\textbf{Case 2: $\limsup_{t \to \infty} t^{-\alpha}L(t) \le \beta$.} 

In this case, we would have $L(t) \le 2\beta t^{\alpha}$ for all $t$ sufficiently large. From Lemma \ref{lem:perimeterlb} and the fact that $L(t) \ge M(t) + m(t)$, we immediately have $E(t) \le C(\beta)t^{2\alpha}$ for $t$ sufficiently large. Using \eqref{eq:potential}, one can extract an increasing sequence of times $t_n \nearrow \infty$ such that
$$
\delta(t_n) \le C(\alpha,\beta)t_n^{2\alpha - 1}.
$$
Hence in case where $\alpha < 1/2$ and $t \gg 1$, the heuristics above behooves us to consider the geometry of $\Gamma_t$ when $\delta(t) = \|(-\Delta)^{-1}\p_1 \rho\|_{L^2}^2 \ll 1$. As we will see in Section \ref{sec:finiteE}, the maximal curvature $\calK(t)$ will be bounded from below by some negative powers of $\delta(t)$. In particular, we will show that for sufficiently large $n$:
\begin{equation}\label{est:case2main}
\calK(t_n) \ge C(\alpha,\beta)\delta(t_n)^{-1/12} \ge C(\alpha,\beta) t_n^{\frac{1-2\alpha}{12}},
\end{equation}
after a careful choice of $\alpha$ and $\beta$. But \eqref{est:case2main} implies that
$$
\limsup_{t\to\infty} t^{\frac{2\alpha-1}{12} + \eps}\calK(t) =\infty
$$
for any $\eps > 0.$ The main result (Theorem \ref{thm:main}) follows from summarizing the above two cases after optimizing over the choices for $\alpha$ and $\beta$.

In the rest of this section, we will study Case 2 in detail. In Section \ref{subsect:prelim}, we first introduce some frequently used notations and establish two technical lemmas concerning $\delta(t)$ and some relevant geometric quantities. In Section \ref{subsect:case2}, we will use the preliminary results proved in Section \ref{subsect:prelim} to show two crucial geometric facts, which help justifying the estimate \eqref{est:case2main} above. Finally, we prove Theorem \ref{thm:main} according to the plan above.

\subsection{Notations and Technical Lemmas}\label{subsect:prelim}
Recalling the central+even symmetry assumptions stated in Definition \ref{defn:odd+even}, we simplify our analysis by only considering the dynamics in the restricted domain $D := (-\pi/2, \pi/2) \times \R$, and we define $D^\pm = \Omega^\pm \cap D$. We then denote the total length of $\Gamma_t \cap D$ by $\calL(t)$. It is clear by the symmetry assumption that
$$
\calL(t) = \int_{-\pi/2}^{\pi/2} |\dotz(\alpha)| d\alpha = \frac{L(t)}{2}.
$$
Again due to the symmetry assumptions, the maximal curvature of the curve $\Gamma_t \cap D$ coincides with $\calK(t)$. We thus slightly abuse the notation by still denoting $\calK(t)$ as the maximal curvature of the curve $\Gamma_t \cap D$. 

With the preparations above, we may start with deriving a quantitative estimate of $\delta(t)$.
\begin{lem}\label{lem:h-2}
    Suppose there exist an $\eps > 0$ and two disks $B_- = B_\epsilon (x^-) \subset D^-$, $B_+ = B_\eps (x^+) \subset D^+$, where $x_2^+ = x_2^-$. Then there exists a universal constant $C_0 > 0$ such that
    \begin{equation}
        \label{est:radiusbd}
        \eps \le C_0\delta(t)^{\frac15}.
    \end{equation}
\end{lem}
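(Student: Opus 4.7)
The plan is to establish the bound by duality applied to a carefully designed ``horizontal dipole'' test function. Writing $\phi := (-\Delta)^{-1}\p_1\rho(t)$, so that $\phi\in L^2$ with $\|\phi\|_{L^2}^2=\delta(t)$ and $-\Delta\phi = \p_1\rho$ distributionally, for every smooth $\xi$ that is compactly supported in the vertical variable we have the identity and inequality
\[
\int_{\T\times\R}\rho\,\p_1\xi\,dx \;=\; -\int (\p_1\rho)\,\xi\,dx \;=\; \int \phi\,\Delta\xi\,dx \;\le\; \sqrt{\delta(t)}\,\|\Delta\xi\|_{L^2}.
\]
Thus it is enough to exhibit a single admissible $\xi$ with $\left|\int\rho\,\p_1\xi\right|\gtrsim 1$ and $\|\Delta\xi\|_{L^2}^2\lesssim \eps^{-5}$; rearranging then yields $\delta(t)\gtrsim \eps^{5}$, i.e.\ \eqref{est:radiusbd}.

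For the construction, I would fix once and for all a smooth radial bump $g_0\ge 0$ supported in the unit disk with $\int g_0=1$, and set $g_\pm(x):=\eps^{-2}g_0((x-x^\pm)/\eps)$, so that $g_\pm\ge 0$ is supported in $B_\pm$ with $\int g_\pm=1$. Define the anti-$x_1$-derivative
\[
\xi(x_1,x_2) \;:=\; \int_{-\pi}^{x_1}\bigl(g_+(s,x_2)-g_-(s,x_2)\bigr)\,ds.
\]
The hypothesis $x_2^+=x_2^-=:x_2^*$ is crucial here: it forces the vertical marginals $s\mapsto \int g_\pm(s,x_2)\,ds$ to coincide for every $x_2$, so that $\xi$ is compactly supported in the $x_2$-strip $[x_2^*-\eps,x_2^*+\eps]$ and satisfies $\xi(\pi,x_2)=\xi(-\pi,x_2)=0$; thus $\xi$ descends to a smooth, $x_1$-periodic function on $\T\times\R$. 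Because $\p_1\xi=g_+-g_-$ is supported in $B_+\cup B_-\subset D^+\cup D^-$, and $\rho|_{B_\pm}=\pm 1$, one immediately gets $\int\rho\,\p_1\xi\,dx=\int g_++\int g_-=2$.

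It remains to control $\|\Delta\xi\|_{L^2}$. The term $\p_1^2\xi=\p_1 g_+-\p_1 g_-$ has sup-norm $O(\eps^{-3})$ on a set of area $O(\eps^2)$, contributing $O(\eps^{-4})$ to the squared $L^2$-norm, which is the easy part. The delicate term is $\p_2^2\xi$: splitting the $x_1$-axis into the five regions determined by the supports of $g_\pm$, one sees that in the \emph{horizontal gap} between the two balls, $\xi(x_1,x_2)$ reduces to $-\eps^{-1}h((x_2-x_2^*)/\eps)$ with $h(v):=\int_\R g_0(u,v)\,du$ (independent of $x_1$), so $\p_2^2\xi = -\eps^{-3}h''((x_2-x_2^*)/\eps)$ there. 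This contribution is $O(\eps^{-3})$ on a rectangle of $x_2$-width $2\eps$ and $x_1$-width at most $d+2\eps\le \pi+2\eps$, using that both balls lie in $D=(-\pi/2,\pi/2)\times\R$. Hence $\|\p_2^2\xi\|_{L^2}^2\lesssim \eps^{-6}\cdot \eps\cdot \pi\lesssim \eps^{-5}$, dominating the $O(\eps^{-4})$ contribution near the ball supports. Combining gives $\|\Delta\xi\|_{L^2}\lesssim \eps^{-5/2}$, and plugging into the duality inequality concludes the proof.

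The main obstacle is the bookkeeping for $\p_2^2\xi$: one must recognize that the dominant contribution to $\|\Delta\xi\|_{L^2}$ comes not from the small supports of $g_\pm$ themselves but from the entire horizontal strip connecting them, and verify the clean cancellation outside the convex hull of $\mathrm{supp}(g_+)\cup\mathrm{supp}(g_-)$ enabled by the alignment $x_2^+=x_2^-$. It is precisely this $O(1)$ gap (rather than an $O(\eps)$ gap) that is responsible for the exponent $1/5$ in \eqref{est:radiusbd}; in particular, the proof shows that if the balls could be taken to have horizontal separation $d\sim \eps$, the sharper exponent $1/4$ would follow.
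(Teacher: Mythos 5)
Your proof is correct and follows essentially the same duality strategy as the paper: both construct a compactly supported ``dipole'' test function detecting the sign difference of $\rho$ on $B_+$ and $B_-$, then balance $\left|\int\rho\,\p_1\varphi\right|$ against $\|\Delta\varphi\|_{L^2}\,\delta(t)^{1/2}$, with the decisive contribution to the bi-Laplacian norm coming from the $O(1)\times O(\eps)$ horizontal strip joining the two disks. The paper's separated-variable $\varphi(x)=g(x_1)h(x_2)$ with trigonometric ramps is, up to a factor of $\eps$, the same as your $\xi$ (the paper normalizes so that $\int\rho\,\p_1\varphi\sim\eps$ and $\|\Delta\varphi\|_{L^2}\lesssim\eps^{-3/2}$, whereas you get $2$ and $\eps^{-5/2}$), so both yield $\eps\lesssim\delta(t)^{1/5}$.
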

\begin{proof}
    Without loss of generality, assume $x_1^+ < x_1^-$. To begin with, we note that for an arbitrary test function $\varphi \in C^\infty_c (\T \times \R)$, the following computation holds:
    \begin{equation}\label{radbdaux1}
    \begin{split}
        \int_{\T \times \R} \p_1 \varphi \rho dx &= -\int_{\T \times \R} \varphi \p_1\rho dx = -\int_{\T \times \R} \varphi (-\Delta)(-\Delta)^{-1}\p_1 \rho dx\\
        &= -\int_{\T \times \R} (-\Delta \varphi)(-\Delta)^{-1}\p_1 \rho dx \le \|\Delta \varphi\|_{L^2}\delta(t)^{1/2}.
        \end{split}
    \end{equation}
    On the other hand, let us consider the test function $\varphi(x) = g(x_1)h(x_2)$, where
    \begin{align*}
    g(x_1) :=\begin{cases}
        1+ \sin\left(\frac{\pi}{2\eps}(x_1 - x_1^+)\right),& |x_1 - x_1^+| \le \eps,\\
        2,& x_1^+ + \eps < x_1 < x_1^- - \eps,\\
        1 - \sin\left(\frac{\pi}{2\eps}(x_1 - x_1^-)\right),& |x_1 - x_1^-| \le \eps,\\
        0,& \text{otherwise},
    \end{cases}
    \end{align*}
    and
    \begin{align*}
    h(x_2) = \begin{cases}
    1 + \cos\left(\frac{\pi}{2\eps}(x_2 - x_2^+)\right),& |x_2 - x_2^+| \le \eps,\\
    0,& \text{otherwise}.
    \end{cases}
    \end{align*}
    From the construction, we know that $$\supp(\nabla^2\varphi) \subset \{(x_1,x_2 )\in \T\times \R\;:\; x_1^+ - \eps\le x_1 \le x_1^- + \eps, x_2^+ - \eps \le x_2 \le x_2^+ +\eps\},$$ and $\|\Delta \varphi\|_{L^\infty} \le \eps^{-2}$. Thus an elementary computation yields
    \begin{equation}\label{radbdaux2}
    \|\Delta\varphi\|_{L^2} \le \|\Delta \varphi\|_{L^\infty}|\supp(\nabla^2\varphi)|^{1/2} \lesssim \eps^{-2}\cdot \eps^{\frac12} = \eps^{-\frac32}.
    \end{equation}
    Moreover, using the fact that $\rho \equiv \pm1$ in $B_\pm$ and $\supp(\p_1 \varphi) \subset B_+ \cup B_-$, we have
    \begin{equation}\label{radbdaux3}
    \int_{\T \times \R}\p_1\varphi \rho dx = \int_{B_+}\p_1 \varphi dx - \int_{B_-}\p_1\varphi dx.
    \end{equation}
    By the construction of $\varphi$, we note that
    \begin{equation}\label{h-2aux1}
    \begin{split}
        \int_{B_+} \p_1\varphi dx &= \frac{\pi}{2\eps}\int_{B_+}\cos\left(\frac{\pi}{2\eps}(x_1 - x_1^+)\right)\left(1 + \cos\left(\frac{\pi}{2\eps}(x_2 - x_2^+)\right)\right) dx\\
        &= \frac{2\eps}{\pi}\int_{B_{\pi/2}(0)}\cos(u_1)(1+\cos(u_2)) du\\
        &= c_0\eps,
    \end{split}
    \end{equation}
    where $c_0$ is a universal constant. A similar computation also yields
    \begin{equation}\label{h-2aux2}
    \int_{B_-}\p_1\varphi = -c_0\eps.
    \end{equation}
    Thus combining \eqref{h-2aux1}, \eqref{h-2aux2} with \eqref{radbdaux2}, we obtain
    $$
    \int_{\T\times\R}\p_1\varphi \rho dx = 2c_0\eps.
    $$
    Further combining with \eqref{radbdaux1} and \eqref{radbdaux2} yields the desired bound \eqref{est:radiusbd}.
\end{proof}

From now on, we define the closed tubular region $\Gamma_t^\eps := \overline{\cup_{x \in \Gamma_t} B_\eps (x) \cap D}$ and set $D^\pm_\eps(t) = D^\pm(t)\backslash \Gamma_t^\eps$; note that $D^\pm_\eps$ are both open sets by definition. Next, we define $h_\eps(t)$ as the lowest $x_2$-coordinate of the region $D^+_\eps$ in the lower-half plane, namely,
$$
h_\eps(t) := \inf\{a < 0:\; D^+_\eps \cap \{x_2 = a\} \neq \varnothing\}.
$$
See Figure~\ref{fig_eps} for an illustration of the sets $D_\epsilon^\pm$ and $\Gamma^\epsilon_t$, and the value $h_\epsilon(t)$.

\begin{figure}[htbp]
\begin{center}
\includegraphics[scale=1]{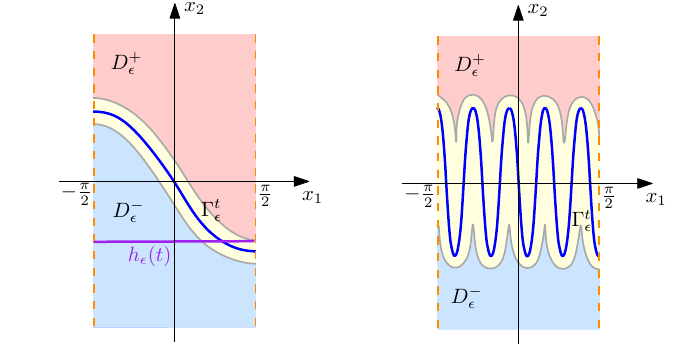}
\end{center}
\caption{\label{fig_eps}Illustration of the definitions of $D_\epsilon^+$ (red region), $D_\epsilon^-$ (blue region), and $\Gamma_\epsilon^+$ (yellow region), and $h_\epsilon(t)$ ($x_2$-coordinate of the purple line in the example on the left). Note that $h_\epsilon(t)$ might not be well-defined if $D_\epsilon^+ \cap \{x_2<0\}$ is empty, as illustrated in the example on the right.}
\end{figure}

However, we must be careful since the infimum contained in the definition of $h_\eps$ might not be well-defined. This is because \textit{a priori} it is possible that $D_\eps^+ \cap \{x_2 < 0\}$ is empty; see the right figure in Figure~\ref{fig_eps} for an example. However, we will show that $h_\eps$ is always well-defined as long as we choose $\eps$ to be appropriately small and the perimeter $\calL(t)$ is sufficiently bounded.

\begin{lem}\label{lem:h(t)}
    Assume that $\calL(t) < \frac{1}{2000}C_0^{-1/3}\delta(t)^{-\frac{1}{15}}$ and $\delta(t)$ is sufficiently small. Then $D_\eps^+ \cap \{x_2 < 0\}$ is nonempty for any $\eps \le {\frac52} C_0\delta(t)^{\frac15}$, where $C_0$ is the universal constant appearing in Lemma \ref{lem:h-2}. Moreover, we have
    \begin{equation}
        \label{est:Depslb}
        |D_\eps^+ \cap \{x_2 < 0\}| > 500C_0^{1/3}\delta(t)^{1/15}.
    \end{equation}
\end{lem}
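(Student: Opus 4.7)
The desired area bound \eqref{est:Depslb} immediately implies that $D_\eps^+ \cap \{x_2 < 0\}$ is nonempty, so I will focus entirely on proving the estimate. The plan is a direct area accounting: use monotonicity of the potential energy to guarantee substantial heavy fluid below $x_2=0$, use central+even symmetry to localize this mass to $D$, use a vertical variation argument to convert the $m(t)$-dependence into a $\calL(t)$-dependence, and finally subtract off the small area of the $\eps$-boundary layer.

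First, by Lemma \ref{lem:dtE} and the normalization $E(0)=1$ stated in Section \ref{sec:organization}, we have $E(t) \ge 1$ for all $t \ge 0$. Central symmetry (Proposition \ref{prop:central}) forces $E^+(t) = E^-(t) = E(t)/2$, so
\[
\int_{\Omega^+ \cap \{x_2<0\}} |x_2|\,dx = \frac{E^-(t)}{2} \ge \frac{1}{4}.
\]
Since $|x_2| \le m(t)$ on the integration domain, a Chebyshev-type estimate yields $|\Omega^+ \cap \{x_2<0\}| \ge \frac{1}{4m(t)}$. The even symmetry \eqref{rhosym} from Proposition \ref{prop:even} then implies that this set has equal measure in $D$ and in $(\T\times\R)\setminus D$, so $|D^+ \cap \{x_2<0\}| \ge \frac{1}{8m(t)}$.

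Next, I would establish the sharp inequality $\calL(t) \ge 2m(t)$. Using Corollary \ref{cor:positions} together with the central symmetry $M(t)=m(t)$, the function $z_2$ vanishes at $\alpha=0,\pm\pi$ and attains both $M(t)$ and $-m(t)$ on $\T$, so its total variation over $\T$ is at least $4m(t)$. The combined central+even symmetry makes $|\p_\alpha z_2|$ invariant under the sign change $\alpha\mapsto -\alpha$ and the reflections $\alpha\mapsto \pi-\alpha$, $\alpha\mapsto -\pi-\alpha$, distributing this variation equally among the four quarter-arcs. Hence $\int_{-\pi/2}^{\pi/2}|\p_\alpha z_2|\,d\alpha \ge 2m(t)$, and $\calL(t) \ge \int_{-\pi/2}^{\pi/2}|\dotz|\,d\alpha \ge 2m(t)$. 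Combining with the previous step and applying the hypothesis $\calL(t) < \frac{1}{2000}C_0^{-1/3}\delta(t)^{-1/15}$,
\[
|D^+ \cap \{x_2<0\}| \ge \frac{1}{4\calL(t)} > 500\,C_0^{1/3}\delta(t)^{1/15}.
\]

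Finally, I would use the tubular neighborhood estimate $|\Gamma_t^\eps \cap D| \le 4\eps\calL(t)$, which is standard at this scale given the $C^{2,\gamma}$ regularity of $\Gamma_t$. Plugging in $\eps \le \frac{5}{2}C_0\delta(t)^{1/5}$ together with the upper bound on $\calL(t)$ gives $|\Gamma_t^\eps \cap D| \lesssim C_0^{2/3}\delta(t)^{2/15}$, which is of strictly higher order in $\delta(t)$ than the main term $\delta(t)^{1/15}$. Subtracting,
\[
|D^+_\eps \cap \{x_2<0\}| \ge |D^+ \cap \{x_2<0\}| - |\Gamma_t^\eps \cap D| > 500\,C_0^{1/3}\delta(t)^{1/15}
\]
for $\delta(t)$ sufficiently small. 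The main technical point I expect to have to watch is precisely the sharp constant in $\calL(t) \ge 2m(t)$: the naive bound $\calL(t) \ge m(t)$ coming from vertical extent alone would lose a factor of $2$ and fall short of producing the required constant $500$, so the argument genuinely requires central and even symmetry working together.
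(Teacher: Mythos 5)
Your proposal follows the same overall strategy as the paper — energy monotonicity plus central/even symmetry plus Chebyshev to lower-bound $|D^+\cap\{x_2<0\}|$ by $c/\calL(t)$, then subtract the tubular neighborhood — and your reduction of $E(t)$ to the quarter-domain $D$ is done carefully and (as far as I can check) correctly. The argument for $\calL(t)\ge 2m(t)$ via equidistribution of the total variation of $z_2$ over the four quarter-arcs is also fine. However, the final step of your proof has a genuine gap that the paper avoids.

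You arrive at $|D^+\cap\{x_2<0\}|\ge \frac{1}{4\calL(t)}$, and the hypothesis $\calL(t)<\frac{1}{2000}C_0^{-1/3}\delta(t)^{-1/15}$ gives $\frac{1}{4\calL(t)}>500C_0^{1/3}\delta(t)^{1/15}$ with \emph{no quantified margin}: $\calL(t)$ can sit arbitrarily close to the threshold, so the gap $\frac{1}{4\calL(t)}-500C_0^{1/3}\delta(t)^{1/15}$ can be arbitrarily small and in particular is not of order $\delta(t)^{1/15}$. Subtracting $|\Gamma_t^\eps\cap D|\lesssim C_0^{2/3}\delta(t)^{2/15}$ therefore does not preserve the strict inequality; the ``higher order in $\delta(t)$'' observation is not enough because you are comparing against a quantity whose excess over $500C_0^{1/3}\delta(t)^{1/15}$ has no lower bound. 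The paper's proof instead establishes the stronger bound $|D^+\cap\{x_2<0\}|\ge\frac{1}{2\calL(t)}$ (a full factor of $2$ above what is ultimately needed), then verifies directly that $\eps<\frac{1}{8\calL(t)^2}$ so that the tubular subtraction eats at most half of it, leaving $|D_\eps^+\cap\{x_2<0\}|>\frac{1}{4\calL(t)}$; the comparison with $500C_0^{1/3}\delta(t)^{1/15}$ is performed only after this absorption. The structural point you miss is that the absorption must happen at the $1/\calL(t)$ scale, not after passing to the $\delta(t)^{1/15}$ bound.

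It is worth flagging, though, that the factor-of-$2$ discrepancy between your $\frac{1}{4\calL(t)}$ and the paper's $\frac{1}{2\calL(t)}$ traces to the paper's reduction $E(t)=-4\int_{D^+\cap\{x_2<0\}}x_2\,dx$, which appears to omit a factor of $2$ from the even-symmetry reflection (one gets $E(t)=-4\int_{\Omega^+\cap\{x_2<0\}}x_2\,dx$ from central symmetry, and then $\int_{\Omega^+\cap\{x_2<0\}}x_2\,dx=2\int_{D^+\cap\{x_2<0\}}x_2\,dx$ from even symmetry, i.e.\ $E(t)=-8\int_{D^+\cap\{x_2<0\}}x_2\,dx$). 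Your more careful bookkeeping is consistent with the latter. If so, the constant in \eqref{est:Depslb} would need to be reduced (and downstream constants adjusted), but this is an issue with the paper, not a rescue of your concluding step: with $\frac{1}{4\calL(t)}$ as the pre-subtraction bound you must still carry out the explicit absorption $4\eps\calL(t)<\frac{c}{\calL(t)}$ for some concrete $c<\frac14$ and settle for a correspondingly smaller numerical constant in the final inequality.
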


\begin{proof}
    We prove this lemma by tracking the measure of the set $D_\eps^+ \cap \{x_2 < 0\}$. Due to our symmetry assumptions on the solution, it is straightforward to observe that
    $$
    E(t) = -4\int_{D^+ \cap \{x_2 < 0\}} x_2 dx = 4\int_{D^- \cap \{x_2 > 0\}} x_2 dx.
    $$
    Recalling the definition of $m(t)$, we have the following lower bound:
    \begin{equation}\label{welldefineaux1}
    |D^+ \cap \{x_2 < 0\}| \ge \frac{E(t)}{4m(t)}.
    \end{equation}
    On the other hand, we can straightforwardly estimate $\Gamma_t^\eps$ by $|\Gamma_t^\eps| \le 2\eps \calL(t)$. Combining this bound with \eqref{welldefineaux1} and recalling $D_\eps^+ = D^+\backslash \Gamma_t^\eps$, we have
    $$
    |D_\eps^+ \cap \{x_2 < 0\}| \ge \frac{E(t)}{4m(t)} - 2\eps \calL(t) \ge \frac{1}{2\calL(t)} - 2\eps\calL(t).
    $$
    Here we used $E(t) \ge E(0) = 1$ and $2m(t) \le \calL(t)$ in the second inequality above. By our assumptions on $\calL(t)$ and $\eps$, it is straightforward to verify that
    $$
    \eps \le {\frac52}C_0\delta(t)^{1/5} < \frac{\calL(t)^{-2}}{8}.
    $$
    given $\delta(t)$ sufficiently small. Combining the two inequalities above, we conclude that $$|D_\eps^+ \cap \{x_2 < 0\}| > \frac{1}{4\calL(t)} \ge 500C_0^{1/3}\delta(t)^{1/15}.$$
\end{proof}

\subsection{Proof of a Key Lemma}\label{subsect:case2}
In this section, we prove a key lemma which reveals that either the perimeter or the maximal curvature of the free interface $\Gamma_t$ has to increase when the quantity $\delta(t)$ decreases. This lemma is crucial in studying the Case 2 described in Section \ref{sec:organization}.
\begin{lem}[Key Lemma]
    \label{lem:case2main2}
    There exists a sufficiently small number $\delta_0 > 0$ only depending on the initial data, so that for $\delta(t) \le \delta_0$, at least one of the following statements holds:
    \begin{equation}\label{est:largeperi}
    \calL(t) \ge \frac{1}{2000}C_0^{-1/3}\delta(t)^{-1/15},
    \end{equation}
    or
    \begin{equation}\label{est:largecur}
    \calK(t) \ge \left({\frac{5}{2}}C_0\right)^{-1/3}\delta(t)^{-1/15},
    \end{equation}
    where $C_0$ is the universal constant appearing in Lemma \ref{lem:h-2}.
    
\end{lem}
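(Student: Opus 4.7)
The plan is a proof by contradiction. I suppose both stated bounds fail, set $\eps := \tfrac{5}{2} C_0 \delta(t)^{1/5}$, and aim to produce a pair of balls $B_\eps(x^+) \subset D^+$ and $B_\eps(x^-) \subset D^-$ at equal heights $x_2^+ = x_2^-$. Lemma~\ref{lem:h-2} will then force $\eps \le C_0 \delta(t)^{1/5}$, contradicting the choice of $\eps$.

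Since the hypothesis on $\calL(t)$ is exactly that of Lemma~\ref{lem:h(t)}, for $\delta(t)$ small enough we have $|D_\eps^+ \cap \{x_2 < 0\}| > 500 C_0^{1/3} \delta(t)^{1/15} > 0$, and any point $x^+$ in this set gives $B_\eps(x^+) \subset D^+$ with $x_2^+ < 0$, by the very definition of $D_\eps^+$.

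For the matching ball in $D^-$ at height $x_2^+$, I exploit the smallness of $\calK(t)$. Let $F$ be the connected component of $D \cap D^+ \cap \{x_2 < 0\}$ containing $x^+$; since $F$ is contained in the bounded strip $D \cap \{-m(t) \le x_2 < 0\}$, it is bounded. Take $p^* = (a^*, y^*)$ to be the leftmost point of $\overline{F} \cap \Gamma_t$. The horizontal coordinate is minimized along (an arc of) $\Gamma_t$ at $p^*$, so the tangent to $\Gamma_t$ at $p^*$ is vertical and the inward unit normal into $F \subset D^+$ is $\hat n^+ = (1,0)$. The curvature bound yields
\begin{equation*}
\eps\,\calK(t) < \bigl(\tfrac{5}{2} C_0\bigr)^{2/3} \delta(t)^{2/15} \ll 1
\end{equation*}
for $\delta(t)$ small, so the osculating disks at $p^*$ on either side of $\Gamma_t$ have radii much larger than $\eps$. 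Placing $B_\eps(p^* + \eps \hat n^+) \subset D^+$ and $B_\eps(p^* - \eps \hat n^+) \subset D^-$ at the common height $y^*$ and invoking Lemma~\ref{lem:h-2} then yields the contradiction.

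The main obstacle I anticipate is the global verification that the two candidate balls are truly contained in $D^\pm$, i.e., that no distant arc of $\Gamma_t$ intrudes into them. The local fit near $p^*$ follows from a Taylor expansion using $|\kappa|\le\calK(t)$, giving a deviation of $\Gamma_t$ from its vertical tangent line of order $\tfrac{1}{2}\calK(t)\,s^2 \ll \eps$ along arclength $s \lesssim \eps$. The global part is subtler: one must preclude $\Gamma_t$ from curling back within distance $\eps$ of $p^*$, and this is where the perimeter bound $\calL(t) < c \delta(t)^{-1/15}$ should enter — since $F$ already contains the ball $B_\eps(x^+)$ (hence has width at least $2\eps$ at height $x_2^+$), the finger cannot thin down near $p^*$ without using more arclength than $\calL(t)$ permits. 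A minor technicality is ensuring $p^*$ lies in the interior of a smooth arc of $\Gamma_t$ (not at a corner where $\Gamma_t$ meets $\{x_2 = 0\}$), which can be arranged by pulling $p^*$ slightly along the arc if necessary.
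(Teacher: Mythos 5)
Your strategy is genuinely different from the paper's, but it contains a real gap that your own heuristic does not close. The paper does not try to manufacture a matching pair of equal-height balls; instead, under the assumption $\calL(t) < \tfrac{1}{2000}C_0^{-1/3}\delta(t)^{-1/15}$, it sets $\eps_0 = \tfrac52 C_0\delta(t)^{1/5}$ and shows (Corollary~\ref{cor:h}, which is precisely the \emph{negative} use of Lemma~\ref{lem:h-2}: no such ball pair can exist at height $h(t)$) that the \emph{entire} horizontal line $\{x_2 = h(t)\}$ lies inside the tubular neighborhood $\Gamma_t^{\eps_0}$. It then places $\sim\eps_0^{-2/3}$ disjoint disks $B_i$ along this line, each of which must meet $\Gamma_t$, and proves two quantitative facts (Propositions~\ref{prop:largeslope} and~\ref{prop:budget}) showing that the tangent at each intersection point is nearly vertical and that a curve of curvature $<\eps_0^{-1/3}$ must spend arclength $\gtrsim\eps_0^{1/3}$ before its tangent can become horizontal again. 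Summing over the disjoint arcs yields $\calL(t)\gtrsim\eps_0^{-1/3}\sim\delta(t)^{-1/15}$, contradicting the assumed perimeter bound. The curvature bound and the perimeter bound are thus used in a tightly interleaved counting argument over many disks, not at a single exceptional point.

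The gap in your proposal is the global containment step, which you correctly flag but do not resolve, and the sketched fix is aimed at the wrong obstruction. You note that $F$ has width at least $2\eps$ at height $x_2^+$ and argue that ``the finger cannot thin down near $p^*$''; but the finger $F$ sits to the \emph{right} of $p^*$ (the $D^+$ side), whereas the ball you need to protect is $B_\eps(p^* - \eps\hat n^+)$ on the \emph{left}, which lies in $D^-$ only if no \emph{other} component of $D^+$ (or its boundary arc, or, if $a^*$ is within $2\eps$ of $-\pi/2$, the mirror image of $F$ under the even symmetry) intrudes. The perimeter bound does not rule this out: a curve of length $\lesssim\delta^{-1/15}$ and curvature $\lesssim\delta^{-1/15}$ can execute a near-$180^\circ$ turn in arclength $\sim\delta^{1/15}$, so it has budget for on the order of $\delta^{-2/15}$ such turns, and nothing prevents two nearly parallel arcs from passing within distance $\eps\sim\delta^{1/5}\ll\delta^{1/15}$ of each other. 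So the intrusion scenario survives both of your hypotheses, and the single-point contradiction you are aiming for is not forced. The paper's disk-counting argument is precisely what is needed to turn ``the line $\{x_2 = h(t)\}$ is covered by the $\eps_0$-neighborhood of $\Gamma_t$'' into a lower bound on arclength, and I do not see a shortcut that avoids something of that flavor.
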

The rest of this section will be dedicated to the proof of this lemma. To start with, we assume that
\begin{equation}
    \label{est:contradiction}
    \calL(t) < \frac{1}{2000}C_0^{-1/3}\delta(t)^{-1/15}.
\end{equation}
We select $\eps_0(t) := \frac{5}{2}C_0\delta(t)^{1/5}$, and consider $h(t) := h_{\eps_0(t)}(t)$, which is well-defined thanks to Lemma \ref{lem:h(t)}. By choosing $\delta_0$ sufficiently small and in view of $\delta(t) \le \delta_0$, we may also assume that $\eps_0 < \frac{1}{100}$. With the two lemmas in Section \ref{subsect:prelim}, we have the following observation concerning the horizontal line $\{x_2 = h(t)\}$.
\begin{cor}\label{cor:h}
    $\{x_2 = h(t)\} \subset \Gamma_t^{\eps_0(t)}$.
\end{cor}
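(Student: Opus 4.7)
The plan is a straightforward contradiction argument. Assume that some point $p=(x_1^*,h(t))$ on the horizontal segment $\{x_2=h(t)\}\cap D$ fails to lie in $\Gamma_t^{\eps_0}$. Since $\Gamma_t^{\eps_0}$ contains the closed $\eps_0$-neighborhood of $\Gamma_t$, this forces $d(p,\Gamma_t)>\eps_0$, and hence the open ball $B_{\eps_0}(p)$ is disjoint from $\Gamma_t$; by connectedness it is contained entirely in $\Omega^+$ or entirely in $\Omega^-$, and I would split into these two cases.

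If $B_{\eps_0}(p)\subset\Omega^+$, then since $d(p,\Gamma_t)>\eps_0$ strictly, I may fix a small $\eta>0$ with $d(p,\Gamma_t)>\eps_0+\eta$ and consider $p':=(x_1^*,h(t)-\eta)$. Then $p'\in\Omega^+$ and $d(p',\Gamma_t)\geq d(p,\Gamma_t)-\eta>\eps_0$, so $p'\in D^+_{\eps_0}$ at a height strictly below $h(t)$. This immediately contradicts the definition of $h(t)$ as the infimum of heights at which $D^+_{\eps_0}$ has a nonempty cross-section.

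The interesting case is $B_{\eps_0}(p)\subset\Omega^-$. Here the strategy is to build a companion disk in $\Omega^+$ at the very same height $h(t)$ and invoke Lemma~\ref{lem:h-2}. Using the infimum definition of $h(t)$, I produce a sequence $q_n\in D^+_{\eps_0}$ with $(q_n)_2\downarrow h(t)$; from $q_n\notin\Gamma_t^{\eps_0}$ I infer $B_{\eps_0}(q_n)\subset\Omega^+$. Inside $B_{\eps_0}(q_n)$ I inscribe a sub-disk centered at $(q_{n,1},h(t))$ of radius $r_n:=\eps_0-((q_n)_2-h(t))\uparrow\eps_0$, and shrink $B_{\eps_0}(p)$ to radius $r_n$ as well. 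This yields two disks of common radius $r_n$ at the same height $h(t)$, one in $\Omega^-$ and one in $\Omega^+$; since they lie in disjoint open sets their centers $x_1^*$ and $q_{n,1}$ are automatically separated by at least $2r_n$, which is exactly the separation the bump-function construction inside the proof of Lemma~\ref{lem:h-2} requires for $g$ to be well-defined. Lemma~\ref{lem:h-2} then yields $r_n\leq C_0\delta(t)^{1/5}$, and letting $n\to\infty$ gives $\eps_0\leq C_0\delta(t)^{1/5}$, contradicting the choice $\eps_0=\tfrac{5}{2}C_0\delta(t)^{1/5}$.

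The main obstacle I anticipate is a mild boundary issue: if $x_1^*$ or $q_{n,1}$ lies within $\eps_0$ of $\pm\pi/2$, the corresponding disk protrudes past the lateral boundary of $D$ and so is not literally contained in $D^\pm$. This is harmless because the even symmetry of $\rho$ about $x_1=\pm\pi/2$ (Proposition~\ref{prop:even}) guarantees that the protruding portion still lies in $\Omega^\pm$ as a subset of $\T\times\R$, and the proof of Lemma~\ref{lem:h-2}, which is built from test functions supported on all of $\T\times\R$ and only uses the hypothesis $B_\pm\subset D^\pm$ through the pointwise identities $\rho\equiv\pm1$ on $B_\pm$, applies verbatim to disks contained in $\Omega^\pm$ rather than just in $D^\pm$.
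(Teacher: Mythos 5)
Your argument is correct and follows essentially the same route as the paper's: assume some point of $\{x_2=h(t)\}\cap D$ lies outside $\Gamma_t^{\eps_0}$, rule out the $\Omega^+$-case using the infimum defining $h(t)$, and in the $\Omega^-$-case manufacture a companion $\Omega^+$-disk at the same height and invoke Lemma~\ref{lem:h-2} to force $\eps_0\le C_0\delta(t)^{1/5}$, contradicting $\eps_0=\tfrac52 C_0\delta(t)^{1/5}$.

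The only real difference is in implementation. The paper avoids your limiting sequence by observing directly that points $q_n\in D^+_{\eps_0}$ with $(q_n)_2\downarrow h(t)$ already yield a \emph{half-radius} ball $B_{\eps_0/2}(a,h(t))$ in the positive phase at height exactly $h(t)$, pairs it with $B_{\eps_0/2}(b,h(t))$ in the negative phase, and applies Lemma~\ref{lem:h-2} once with $\eps=\eps_0/2$; your version shrinks both disks to radius $r_n\uparrow\eps_0$ and passes to the limit, which costs one extra step but gets the same inequality. Your final paragraph is a genuine improvement on what the paper says out loud: the paper asserts $B_{\eps_0/2}(a,h(t))\subset D^+$ and $B_{\eps_0/2}(b,h(t))\subset D^-$ without addressing what happens when $a$ or $b$ is within $\eps_0/2$ of $\pm\pi/2$, in which case the disks literally leave $D$ and the hypothesis $B_\pm\subset D^\pm$ of Lemma~\ref{lem:h-2} as stated fails. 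You correctly note that the disks still lie in $\Omega^\pm$ (already immediate from $B_{\eps_0}(q_n)\cap\Gamma_t=\varnothing$, even without invoking the reflection symmetry), and that the proof of Lemma~\ref{lem:h-2} never uses $D^\pm$ except through $\rho\equiv\pm1$ on the disks, so the lemma applies verbatim with $\Omega^\pm$ in place of $D^\pm$. That is exactly the right way to close the gap.
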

\begin{proof}
    First by definition of $h(t)$, there exists $a \in (-\pi/2, \pi/2)$ such that $B_{\eps_0/2}(a,h(t)) \subset D^+$. Suppose for the sake of contradiction that $\{x_2 = h(t)\} \not\subset \Gamma_t^{\eps_0(t)}$. Then there exists $b \in (-\pi/2, \pi/2)$ such that $(b,h(t)) \in D^+_{\eps_0} \cup D^-_{\eps_0}$. In fact, we must further have $(b,h(t)) \in D^-_{\eps_0}$ due to the minimality of $h(t)$. This is because $D_{\eps_0}^\pm$ are open sets by definition. Hence, $B_{\eps_0/2}(b,h(t)) \subset D^-$. By Lemma \ref{lem:h-2}, however, we must have $\eps_0/2 \le C_0\delta(t)^{1/5}$, which contradicts the definition of $\eps_0$.
\end{proof}

Before we proceed, we remark on several elementary but important facts concerning the interface $\Gamma_t$, which relies heavily on the symmetry assumption. 

From \eqref{evensym1}, we know that $\Gamma_t \cap \bar D$ is a connected curve with two end points. This is because otherwise the whole curve $\Gamma_t$ would have a point of self-intersection, which contradicts the global well-posedness result stated in \cite{GGS25}. In view of Corollary \ref{cor:positions}, we have $\Gamma_t \cap \bar D = \{z(\alpha),\; \alpha \in [-\pi/2, \pi/2]\}.$ Moreover, $z_2(-\pi/2) = -z_2(\pi/2)$, and $z(-\pi/2), z(\pi/2)$ are the only two points on $\Gamma_t \cap \bar D$ which intersect $\p D$. Thus from now on, we may without loss assume that $z_2(-\pi/2) \ge 0$, as otherwise we could simply change the orientation of the curve by setting $\alpha \mapsto -\alpha$.

Moreover, let us consider the following collection of disks:
$$
B_i := B_{\eps_0}\left(-\frac{\pi}{2} + \eps_0(4i+1), h(t)\right),\quad i = 1,\hdots, \lfloor\frac{\eps_0^{-2/3}}{1600}\rfloor.
$$
By Corollary \ref{cor:h} and the definition of $\Gamma_t^{\eps_0}$, we know that there exists $\beta_i \in (-\pi/2,\pi/2)$ such that $z(\beta_i) \in \Gamma_t \cap \bar D \cap B_i$ for each $i$. 

We will show \eqref{est:largecur} by a contradiction argument. That is, we start by assuming that $\calK(t) < \eps_0^{-1/3} = \left(\frac{3}{2}C_0\right)^{-1/3}\delta(t)^{-1/15}$ and then demonstrate that \eqref{est:contradiction} is violated. The proof by contradiction fundamentally utilizes two geometric facts that are covered in the following two propositions. The first of which says that the tangent vector at each point $z(\beta_i)$ as defined above has to be almost vertical:
\begin{prop}
    \label{prop:largeslope}
    Assume that $\calK(t) < \eps_0^{-1/3}$ and the bound \eqref{est:contradiction} holds. For an arbitrary point $y = (y_1, y_2) = z(\beta) \in \Gamma_t \cap D$ with $d := y_1 + \pi/2 \le \frac{\eps_0^{1/3}}{400}$ and $y_2 \in [h(t) - \eps_0, h(t) + \eps_0]$, we have
    \begin{equation}
        \label{est:largeslope}
        \left|\frac{\dotz_2(\beta)}{\dotz_1(\beta)}\right| \ge \frac{\eps_0^{1/6}}{10d^{1/2}}.
    \end{equation}
\end{prop}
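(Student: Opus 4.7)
My approach is by contradiction. I suppose $|\dot{z}_2(\beta)/\dot{z}_1(\beta)| < \eps_0^{1/6}/(10 \sqrt{d})$ at $y = z(\beta)$ and aim to derive an inconsistency with the hypotheses $\calK(t) < \eps_0^{-1/3}$ and $y_2 \in [h(t)-\eps_0, h(t)+\eps_0]$, combined with the reflection symmetry \eqref{evensym1}. Since the slope at $y$ is finite, the arc of $\Gamma_t$ through $y$ is locally a graph $x_2 = f(x_1)$ with $f(y_1) = y_2$ and $f'(y_1)$ equal to the tangent slope. Proposition \ref{prop:even} forces every transverse crossing of $\Gamma_t$ with the axis $x_1 = -\pi/2$ to occur with horizontal tangent, so if $f$ extends leftwards all the way to this axis, it meets a point $p^* = (-\pi/2, y_2^*)$ with $f'(-\pi/2) = 0$.

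When this extension succeeds, the curvature-to-graph inequality $|f''|/(1+(f')^2)^{3/2} \le \calK$ together with the mean value theorem give the Taylor-type controls
$$|f'(y_1)| = |f'(y_1) - f'(-\pi/2)| \lesssim \calK\, d, \qquad |y_2 - y_2^*| \lesssim \calK\, d^2.$$
For $d \le \eps_0^{1/3}/400$ and $\calK \le \eps_0^{-1/3}$ these read $|f'(y_1)| \lesssim 1/400$ and $|y_2^* - y_2| = O(\eps_0^{1/3}/160000)$, so $p^*$ lies $O(\eps_0)$-close to $h(t)$ in $x_2$. The key geometric input is then that $p^*$ is an extremum of $f$: the arc of $\Gamma_t$ near $p^*$ bounds a pocket of $\Omega^\pm$ whose width at height $|x_2 - y_2^*|$ above the extremum scales like $\sqrt{|x_2 - y_2^*|/\calK}$. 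The defining property of $h(t)$ as the lowest $x_2$-level intersecting $D^+_{\eps_0} \cap \{x_2<0\}$ then forces this width at level $h(t)$ to be exactly $2\eps_0$, yielding $h(t) - y_2^* \sim \eps_0^2 \calK$. Matching this with the Taylor control $|y_2^* - y_2| \lesssim \calK d^2$ and $|y_2 - h(t)| \le \eps_0$ produces a numerical contradiction within the range $d \le \eps_0^{1/3}/400$, so the graph representation must fail before reaching $x_1 = -\pi/2$. Tracking the tangent angle $\theta(s)$ via $|\theta'(s)| \le \calK$ on the arc-length portion where the graph does exist then delivers the stated lower bound $|f'(y_1)| \ge \eps_0^{1/6}/(10 \sqrt{d})$.

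The main obstacle is the pocket-width argument: one must rule out that a horizontal-tangent crossing of $\{x_1 = -\pi/2\}$ can have its $x_2$-coordinate so close to $h(t)$ that the mean-value upper bound $|f'(y_1)| \lesssim \calK\, d$ coexists with the claimed lower bound. The parabolic scaling near a curvature-bounded extremum together with the definition of $h(t)$ provides the necessary separation, and balancing $\eps_0^2 \calK \sim h(t) - y_2^*$ against the Taylor term $\calK d^2$ produces naturally the exponents $\tfrac16$ and $\tfrac12$ on $\eps_0$ and $d$ in the conclusion.
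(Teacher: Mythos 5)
The core geometric fact the paper leans on is that, by the symmetry discussion around Corollary~\ref{cor:positions} and the global well-posedness of \cite{GGS25}, the curve $\Gamma_t\cap\bar D$ meets $\{x_1=-\pi/2\}$ in \emph{exactly one} point, namely $z(-\pi/2)$, and by the chosen orientation $z_2(-\pi/2)\ge 0$. Your proof never invokes this. Instead, you allow the hypothetical crossing $p^*=(-\pi/2,y_2^*)$ to sit near height $h(t)<0$ and try to rule it out by a ``pocket width'' heuristic. That step is the genuine gap: the definition $h(t)=\inf\{a<0:\;D^+_{\eps_0}\cap\{x_2=a\}\ne\varnothing\}$ fixes only the lowest level at which $D^+$ contains a point at distance $>\eps_0$ from $\Gamma_t$; it does \emph{not} assert that this level is realized by the particular cap near $p^*$, nor that the cap has curvature comparable to $\calK$ (the curvature at the extremum can be anywhere in $[0,\calK]$, making the cap arbitrarily wide). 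Even granting the heuristic, the exponents do not close: $h(t)-y_2^*\sim\eps_0^2\calK\sim\eps_0^{5/3}$, while your Taylor control together with $|y_2-h(t)|\le\eps_0$ only yields $|h(t)-y_2^*|\lesssim\eps_0+\calK d^2$; since $\eps_0^{5/3}\ll\eps_0$, these estimates are entirely compatible and produce no contradiction. Once one uses the single-crossing fact, the whole pocket discussion is unnecessary: if the graph extends with $f'(-\pi/2)=0$, then $y_2^*=z_2(-\pi/2)\ge 0$, while your own estimate gives $y_2^*\le y_2+O(\calK d^2)\le h(t)+\eps_0+O(\eps_0^{1/3})<0$ (using Lemma~\ref{lem:h(t)} to bound $|h(t)|$ from below), which is an immediate contradiction.

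The second issue is the fallback case. You say that when the graph representation fails before $x_1=-\pi/2$, ``tracking the tangent angle $\theta(s)$ via $|\theta'(s)|\le\calK$ delivers the stated lower bound.'' This is not justified. If the slope blows up at some $x_1'\in(-\pi/2,y_1)$, you know $\theta$ changes from $\theta(y_1)$ to $\pm\pi/2$ over \emph{some} arclength, but there is no upper bound on that arclength (the curve can wander far in $x_2$ while $x_1$ stays in a band of width $d$), so $|\theta'|\le\calK$ gives no lower bound on $|\theta(y_1)|$. Relatedly, your single parametrization $x_2=f(x_1)$ is ill-suited to the regime $|\dot z_2/\dot z_1|\ge 1$: the paper's Lemma~\ref{lem:largeslopeaux1} handles this by switching to the $x_1=f(x_2)$ chart, where the contradiction hypothesis becomes a \emph{lower} bound $|f'(y_2)|>10\sqrt{d}/\eps_0^{1/6}$, and precisely this lower bound is what forces the curve to hit $x_1=-\pi/2$ within a $\Delta x_2$ window of order $d^{1/2}\eps_0^{1/6}\ll\eps_0^{1/3}$. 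That quantitative conclusion, combined with $h(t)+\eps_0^{1/3}<0$ and the uniqueness of the crossing point, is the paper's contradiction. Your argument would need to replicate that control over the height at which the curve reaches the wall, which the present write-up does not do.
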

The second geometric fact is that for an arbitrary point $z(\beta) \in \Gamma_t \cap D$ whose tangent is not too horizontal, one must pay an arclength of at least order $\eps_0^{1/3}$ before the tangent vector becomes horizontal again.
\begin{prop}\label{prop:budget}
    Assume that $\calK(t) < \eps_0^{-1/3}$, and consider an arbitrary point $z(\beta) \in \Gamma_t\cap D$ with
    $$
    \left|\frac{\dotz_2(\beta)}{\dotz_1(\beta)}\right| \ge 1.
    $$
    Define
    $$
    \bar{\beta} := \inf\{\alpha > \beta\;|\; \dotz_2(\alpha) = 0\},\quad \underline{\beta} := \sup\{\alpha < \beta\;|\; \dotz_2(\alpha) = 0\}.
    $$
    Then
    $$
    \int_\beta^{\bar{\beta}} |\dotz(\alpha)| d\alpha \ge \frac{\eps_0^{1/3}}{2},\quad \int^\beta_{\underline{\beta}} |\dotz(\alpha)| d\alpha \ge \frac{\eps_0^{1/3}}{2}.
    $$
\end{prop}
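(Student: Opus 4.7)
The plan is to exploit the fact that, along a curve of bounded curvature, the tangent direction cannot rotate quickly, so a substantial angular change requires a substantial arclength. Here the angular change between the endpoints must be at least $\pi/4$, and combined with $\mathcal{K}(t)<\epsilon_0^{-1/3}$ this will immediately give the claimed $\tfrac{1}{2}\epsilon_0^{1/3}$ lower bound (with room to spare, since $\pi/4>1/2$).

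Concretely, I would first parametrize $\Gamma_t\cap D$ by arclength $s$ and let $\theta(s)$ be the angle that the unit tangent makes with the positive horizontal axis. The identity $d\theta/ds=\kappa(s)$, together with $|\kappa|\le\mathcal{K}(t)$, yields
\[
|\theta(s_1)-\theta(s_0)|\le \int_{s_0}^{s_1}|\kappa(s)|\,ds \le \mathcal{K}(t)(s_1-s_0).
\]
Assume without loss of generality that $\dot{z}_2(\beta)>0$ (the other sign is treated identically). The hypothesis $|\dot{z}_2(\beta)/\dot{z}_1(\beta)|\ge 1$ places $\theta(\beta)$ in $[\pi/4,3\pi/4]$. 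By the definition of $\bar\beta$ and continuity of $\dot{z}$, one has $\dot{z}_2(\alpha)>0$ on $[\beta,\bar\beta)$ and $\dot{z}_2(\bar\beta)=0$, so $\theta(\bar\beta)\in\{0,\pi\}$ while $\theta(\alpha)\in(0,\pi)$ throughout $[\beta,\bar\beta)$. The minimum distance from $[\pi/4,3\pi/4]$ to $\{0,\pi\}$ is exactly $\pi/4$, hence $|\theta(\bar\beta)-\theta(\beta)|\ge \pi/4$.

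Converting back to the original parameter $\alpha$, the curvature inequality above becomes
\[
\frac{\pi}{4}\le |\theta(\bar\beta)-\theta(\beta)|\le \mathcal{K}(t)\int_\beta^{\bar\beta}|\dot{z}(\alpha)|\,d\alpha < \epsilon_0^{-1/3}\int_\beta^{\bar\beta}|\dot{z}(\alpha)|\,d\alpha,
\]
so $\int_\beta^{\bar\beta}|\dot{z}(\alpha)|\,d\alpha > \frac{\pi}{4}\epsilon_0^{1/3}>\frac{\epsilon_0^{1/3}}{2}$. The estimate for $\int^\beta_{\underline\beta}|\dot{z}(\alpha)|\,d\alpha$ follows by running the same argument backwards on $[\underline\beta,\beta]$.

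I do not anticipate any real obstacle: the argument is essentially geometric and uses nothing about the Stokes dynamics beyond the pointwise curvature bound. The one preliminary point to record is that $\bar\beta$ and $\underline\beta$ are actually finite (so the defining sets are nonempty). This follows from the even symmetry \eqref{evensym1}: differentiating the relation $z_2(\alpha)=z_2(\pi-\alpha)$ at $\alpha=\pi/2$ gives $\dot{z}_2(\pi/2)=-\dot{z}_2(\pi/2)=0$, and similarly $\dot{z}_2(-\pi/2)=0$, so the infimum/supremum in the definitions are attained inside $[-\pi/2,\pi/2]$.
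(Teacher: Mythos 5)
Your proof is correct and follows essentially the same geometric idea as the paper's: the tangent angle must rotate by a definite amount between $\beta$ and $\bar\beta$, and the curvature bound controls how much arclength that rotation costs via $|\theta'(\alpha)| = \kappa(\alpha)|\dot z(\alpha)|$. The only difference is one of implementation: the paper tracks the slope $\tan\theta$ rather than $\theta$ itself, which forces it to introduce an auxiliary point $\tilde\beta\in[\beta,\bar\beta)$ (the last parameter at which $|\tan\theta|=1$) so that $\sec^2\theta\le 2$ on $[\tilde\beta,\bar\beta]$ and the integrand $\sec^2\theta\,\theta'$ is controlled; you track $\theta$ directly, so no auxiliary point is needed and the angular drop of $\pi/4$ gives a slightly better constant ($\pi/4\,\eps_0^{1/3}$ rather than $\tfrac12\eps_0^{1/3}$). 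Your remark that $\bar\beta,\underline\beta$ are well-defined via the symmetry forcing $\dot z_2(\pm\pi/2)=0$ matches the remark the paper records right after the statement.
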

\begin{rmk}
    The quantities $\bar\beta$ and $\underline{\beta}$ are indeed well-defined and $\bar\beta, \underline{\beta} \in [-\pi/2,\pi/2]$. This is because we must have $\dot{z}_2(-\pi/2) = \dot{z}_2(\pi/2) = 0$ due to our symmetry assumptions.
\end{rmk}
Before proving the aforementioned geometric facts, we demonstrate how to conclude the proof of Lemma \ref{lem:case2main2} given these two Propositions. First, we apply Proposition \ref{prop:largeslope} with $d = \eps_0^{1/3}/400$ to $z(\beta_i)$, which yields:
\begin{equation}\label{est:bislope}
\left|\frac{\dotz_2(\beta_i)}{\dotz_1(\beta_i)}\right| \ge \frac{\eps_0^{1/6}}{10}\frac{20}{\eps_0^{1/6}} = 2 > 1.
\end{equation}
Here, we used the fact that $|z_1(\beta_i) + \pi/2| \le \eps_0^{1/3}/400$ and $\eps_0 \ll 1$. Then applying Proposition \ref{prop:budget} to $z(\beta_i)$, we obtain that
$$
\int_{\beta_i}^{\bar{\beta_i}} |\dotz(\alpha)| d\alpha \ge \frac{\eps_0^{1/3}}{2},\quad \int^{\beta_i}_{\underline{\beta_i}} |\dotz(\alpha)| d\alpha \ge \frac{\eps_0^{1/3}}{2}.
$$
Now we consider the segments $\Gamma_i := \{z(\alpha)\;|\; \alpha \in (\underline{\beta_i}, \bar{\beta_i})\},$ and the following lemma holds:

\begin{lem}\label{lem:components}
   For $i \neq j$, $\Gamma_i, \Gamma_j$ are disjoint. 
\end{lem}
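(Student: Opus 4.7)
I argue by contradiction: suppose there exist $i\ne j$ with $\Gamma_i\cap\Gamma_j\ne\emptyset$, and derive a contradiction with the horizontal separation between $B_i$ and $B_j$. The first step upgrades an image-intersection into an identification of parameter intervals. Since the $C^{2,\gamma}$ global well-posedness in \cite{GGS25} makes $\Gamma_t$ a simple curve, any shared image point comes from a shared parameter, so the open intervals $(\underline{\beta_i},\bar{\beta_i})$ and $(\underline{\beta_j},\bar{\beta_j})$ overlap. Each is the maximal open subinterval of $(-\pi/2,\pi/2)$ containing its $\beta$-value on which the continuous function $\dot z_2$ is nonzero, and two such maximal intervals are either equal or disjoint. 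Hence they coincide, and I set $I:=(\underline{\beta_i},\bar{\beta_i})=(\underline{\beta_j},\bar{\beta_j})$.

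On $I$ the sign of $\dot z_2$ is fixed; I take $\dot z_2>0$ and $\beta_i<\beta_j$ without loss of generality, so $z|_I$ is the graph $z_1=\phi(z_2)$. The inclusions $z(\beta_k)\in B_k$ together with monotonicity give $z_2(\alpha)\in[h(t)-\eps_0,h(t)+\eps_0]$ for every $\alpha\in[\beta_i,\beta_j]$, which is precisely the $y_2$-condition of Proposition~\ref{prop:largeslope} along the whole sub-arc. The horizontal locations of the disks $B_k$, together with the smallness of $\eps_0$ (this is exactly the ingredient used in \eqref{est:bislope}), also yield $z_1(\beta_i)+\pi/2,\, z_1(\beta_j)+\pi/2\le \eps_0^{1/3}/400$.

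The key step is the bootstrap
\[
z_1(\alpha)+\pi/2\le \frac{\eps_0^{1/3}}{400}\qquad\text{for all } \alpha\in[\beta_i,\beta_j],
\]
which, granted, lets Proposition~\ref{prop:largeslope} apply pointwise along the sub-arc to give $|\dot z_2/\dot z_1|\ge 2$ and hence $|\phi'(z_2)|\le 1/2$. Integrating against the vertical budget $z_2(\beta_j)-z_2(\beta_i)\le 2\eps_0$ produces
\[
|z_1(\beta_j)-z_1(\beta_i)|=|\phi(z_2(\beta_j))-\phi(z_2(\beta_i))|\le \eps_0,
\]
which contradicts the $\ge 2\eps_0$ horizontal separation $|z_1(\beta_j)-z_1(\beta_i)|\ge 4\eps_0|j-i|-2\eps_0\ge 2\eps_0$ coming from the center-spacing and radii of $B_i,B_j$.

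The main obstacle is the bootstrap itself. I would run a closed-open continuity argument on $T:=\{\alpha\in[\beta_i,\beta_j]:z_1(\alpha')+\pi/2\le \eps_0^{1/3}/400 \text{ for every } \alpha'\in[\beta_i,\alpha]\}$, which is a closed subinterval containing $\beta_i$; on $T$, the slope bound from Proposition~\ref{prop:largeslope} integrates to $z_1(\alpha)-z_1(\beta_i)\le \tfrac12(z_2(\alpha)-z_2(\beta_i))\le \eps_0$. The delicate point is that closing $T=[\beta_i,\beta_j]$ requires the endpoint value $z_1(\beta_i)+\pi/2$ to sit strictly below $\eps_0^{1/3}/400$ by more than the accumulated growth $\eps_0$; this strict margin is exactly what the smallness of $\eps_0$ (equivalently $\delta(t)\le \delta_0$) is invoked to provide, paralleling the ``$\eps_0\ll 1$'' used in \eqref{est:bislope}. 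With this margin, continuity forbids $\sup T$ from being an interior point of $[\beta_i,\beta_j]$, so $T=[\beta_i,\beta_j]$ and the bootstrap closes.
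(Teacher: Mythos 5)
Your first step --- upgrading the image intersection to coincidence of the parameter intervals, so that $\Gamma_i=\Gamma_j$ share the common maximal interval $I$ where $\dotz_2\neq 0$ --- is correct, and is precisely the paper's ``either coincide or are disjoint'' observation. Your end-game also mirrors the paper's: a slope bound $|\dotz_2/\dotz_1|\gtrsim 1$ along the sub-arc integrates over the $x_2$-budget $\le 2\eps_0$ to give $|z_1(\beta_j)-z_1(\beta_i)|$ strictly less than the $\ge 2\eps_0$ horizontal spacing guaranteed by the construction of the disks. The difference, and the gap, is in how you get the slope bound along the arc. You re-invoke Proposition~\ref{prop:largeslope} at every point, which requires $z_1(\alpha)+\pi/2\le\eps_0^{1/3}/400$ pointwise, hence the bootstrap. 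The paper avoids the pointwise $d$-hypothesis entirely: it applies Proposition~\ref{prop:largeslope} only at $\beta_i$ to get \eqref{est:bislope}, then views $\Gamma_i$ near $x_2\in(h(t)-\eps_0,h(t)+\eps_0)$ as a graph of $x_2$ and propagates the slope through the curvature bound $\calK(t)<\eps_0^{-1/3}$ --- over a vertical range $2\eps_0$ the graph slope drifts by $O(\eps_0^{2/3})$, so $|\dotz_2/\dotz_1|\ge 2$ at $\beta_i$ degrades only to $\ge 3/2$ on the whole piece. No further invocation of Proposition~\ref{prop:largeslope} is needed.

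Your bootstrap does not close as stated, and the reason you give (``smallness of $\eps_0$'') is not the right one. As you note, closing $T=[\beta_i,\beta_j]$ requires $z_1(\beta_i)+\pi/2<\eps_0^{1/3}/400-\eps_0$. But the disks run over $k=1,\dots,\lfloor\eps_0^{-2/3}/1600\rfloor$, so for the largest index $z_1(\beta_k)+\pi/2$ can be as large as $(4\lfloor\eps_0^{-2/3}/1600\rfloor+2)\eps_0\approx\eps_0^{1/3}/400$: there is no room left to absorb the accumulated drift $\eps_0$, and shrinking $\eps_0$ rescales the threshold and the endpoint in lockstep, producing no extra margin. One can repair the bootstrap by tightening the range of indices (say to $k\le\lfloor\eps_0^{-2/3}/3200\rfloor$, which only costs a universal constant in the final lower bound on $\calL(t)$), but the simpler route is the paper's curvature-propagation argument, which makes the pointwise $d$-condition unnecessary.
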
 
\begin{proof}
    Given $i \neq j$, the lemma clearly follows from the following two facts:
    \begin{enumerate}
        \item $\Gamma_i, \Gamma_j$ either coincide or are disjoint;
        \item $z(\beta_j) \not\in \Gamma_i$.
    \end{enumerate}
    To prove the first fact, we note that if $z(\beta) \in \Gamma_i \cap \Gamma_j$ for some $\beta$, then $\bar\beta = \bar{\beta_i}$, $\underline{\beta} = \underline{\beta}_i$ if we view $z(\beta)$ as an element in $\Gamma_i$. An identical reasoning gives that $\bar\beta = \bar{\beta_j}$, $\underline{\beta} = \underline{\beta}_j$ if we see $z(\beta)$ as a member of $\Gamma_j$. But this implies that $\bar{\beta_i} = \bar{\beta_j}$, $\underline{\beta}_i = \underline{\beta}_j$, and therefore $\Gamma_i = \Gamma_j$.

    To prove the second fact, we first note that $\Gamma_i \cap \{x_2 = z_2(\beta_j)\} \neq \varnothing$ due to the fact that $|z_2(\beta_i) - h(t)| < \eps_0$ and Proposition \ref{prop:largeslope}. Define the (unique) point $z(\gamma) \in \Gamma_i$ such that $z_2(\gamma) = z_2(\beta_j)$. It is clear that it suffices just to show that $z(\gamma) \neq z(\beta_j)$, which amounts to showing that $z_1(\gamma) \neq z_1(\beta_j)$. To wit, we observe that for any $z(\beta) \in \Gamma_i$ such that $|z_2(\beta) - h(t)| < \eps_0$, we must have $\left|\frac{\dotz_2(\beta)}{\dotz_1(\beta)}\right| > \frac32$. One can prove this observation by viewing $\Gamma_i \cap \{x_2 \in (h(t) + \eps_0, h(t) - \eps_0)\}$ as a graph of $x_2$, and using the bound \eqref{est:bislope}, the curvature bound $\calK(t) < \eps_0^{-1/3}$, and $|z_2(\beta) - z_2(\beta_i)| < 2\eps_0$. We leave the details to interested readers. Hence, the above observation implies that $|z_1(\beta_i) - z_1(\gamma)| < \frac{4}{3}\eps_0$. On the other hand, we know that $|z_1(\beta_i) - z_1(\beta_j)| > 2\eps_0$ by construction of the collection of disks $\{B_k\}_k$. The two facts combined imply that $z_1(\gamma) \neq z_1(\beta_j)$, which concludes the proof of the lemma.
\end{proof}

Given Lemma \ref{lem:components}, we immediately have
$$
\calL(t) \ge \sum_{i = 1}^{\lfloor\frac{\eps_0^{-2/3}}{1600} \rfloor}|\Gamma_i| \ge \frac{\eps_0^{-1/3}}{1600} > \frac{1}{2000}C_0^{-1/3}\delta(t)^{-1/15},
$$
which contradicts \eqref{est:contradiction}. This concludes the proof of Lemma \ref{lem:case2main2}.

Now, we only need to prove Proposition \ref{prop:largeslope} and Proposition \ref{prop:budget}.

\subsubsection{Proof of Proposition \ref{prop:largeslope}}
The proof of the proposition hinges on the following geometric lemma.
\begin{lem}\label{lem:largeslopeaux1}
    Given the assumptions stated in Proposition \ref{prop:largeslope}, and additionally assuming that 
    \begin{equation}\label{est:lemlargeslopeaux1}
    \left|\frac{\dot{z_2}(\beta)}{\dot{z_1}(\beta)}\right| < \frac{\eps_0^{1/6}}{10d^{1/2}},
    \end{equation}
    we have
    $$
    \Gamma_t \cap \left\{\left(-\frac{\pi}{2}, y_2\right)\;|\; y_2 \in (h(t) - \eps_0^{1/3}, h(t) +\eps_0^{1/3})\right\} \neq \varnothing.
    $$
\end{lem}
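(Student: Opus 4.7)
The plan is to exploit the nearly-horizontal tangent at $y = z(\beta)$ together with the curvature bound $\calK(t) < \eps_0^{-1/3}$ in order to realize $\Gamma_t$ near $y$ as the graph of a $C^{2,\gamma}$ function $x_2 = F(x_1)$, extend this graph leftward all the way to $x_1 = -\pi/2$, and bound its total height variation by $\eps_0^{1/3}$. The terminal point will then lie in $\Gamma_t \cap \{x_1 = -\pi/2\}$ at a height within $\eps_0^{1/3}$ of $y_2 \in [h(t) - \eps_0, h(t) + \eps_0]$, which is what the lemma asks.

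I would proceed as follows. After reversing orientation if necessary, assume $\dotz_1(\beta) < 0$, so that locally the arc is a graph $x_2 = F(x_1)$ with $F'(y_1) = m_0 := \dotz_2(\beta)/\dotz_1(\beta)$, $|m_0| < \eps_0^{1/6}/(10 d^{1/2})$. Extend $F$ leftward maximally, obtaining $F \in C^{2,\gamma}((x^*, y_1])$ where either $x^* = -\pi/2$ or the tangent becomes vertical as $x_1 \to x^*+$. Since the graph curvature is $(g \circ F')'(x_1)$ for $g(u) := u/\sqrt{1+u^2}$, the pointwise bound $|(g \circ F')'| \le \eps_0^{-1/3}$ integrates from $y_1$ to
\begin{equation*}
|g(F'(x_1)) - g(m_0)| \le \eps_0^{-1/3} d \qquad \text{on } (x^*, y_1].
\end{equation*}
The key quantitative input is the elementary inequality $1 - |g(m)| \ge 1/(2(1+m^2))$; combined with $|m_0|^2 < \eps_0^{1/3}/(100 d)$ and $d \le \eps_0^{1/3}/400$, this yields $1 - |g(m_0)| \ge 40 d/\eps_0^{1/3}$, so $|g(F'(x_1))| \le 1 - 39 d/\eps_0^{1/3} < 1$ uniformly. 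Inverting $g$ then produces a uniform slope bound of the form $|F'(x_1)| \lesssim \eps_0^{1/6}/\sqrt d$.

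Because $|F'|$ is uniformly bounded on $(x^*, y_1]$, the tangent cannot become vertical there, which rules out the second alternative and forces $x^* = -\pi/2$. Therefore the graph reaches $x_1 = -\pi/2$, providing a point $(-\pi/2, F(-\pi/2)) \in \Gamma_t$ (if the extension arrives at $-\pi/2$ only via the arc's endpoint $\alpha = -\pi/2$, Corollary \ref{cor:positions} already ensures $z_1(-\pi/2) = -\pi/2$). The Mean Value Theorem then gives
\begin{equation*}
|F(-\pi/2) - y_2| \le \sup|F'| \cdot d \lesssim \eps_0^{1/6} \sqrt d \le \eps_0^{1/6} \cdot \frac{\eps_0^{1/6}}{20} = \frac{\eps_0^{1/3}}{20}.
\end{equation*}
Combining with $y_2 \in [h(t) - \eps_0, h(t) + \eps_0]$ and $\eps_0 < 1/100$ places $F(-\pi/2)$ strictly inside $(h(t) - \eps_0^{1/3}, h(t) + \eps_0^{1/3})$, concluding the proof.

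The main obstacle is the uniform slope bound in the second paragraph. The hypothesis permits $|m_0|$ to be as large as approximately $2$ when $d$ is close to its maximum $\eps_0^{1/3}/400$, so the \emph{a priori} margin $1 - |g(m_0)|$ is only $O(1)$ and cannot be surrendered cheaply to the $\eps_0^{-1/3} d$ perturbation coming from the ODE analysis. The sharp bound $1 - |g(m)| \ge 1/(2(1+m^2))$ in the large-$|m|$ regime, coupled with the scale-matching $|m_0|^2 d \le \eps_0^{1/3}/100$ encoded in the slope hypothesis, is exactly what keeps $|g(F')|$ bounded away from $1$ throughout and allows the graph to reach $x_1 = -\pi/2$ intact.
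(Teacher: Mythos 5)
Your argument is correct, and it takes a genuinely different route from the paper. The paper splits into two cases according to whether $\left|\dot z_2(\beta)/\dot z_1(\beta)\right| \geq 1$ or $<1$: in the steep case it writes the curve locally as a graph $x_1 = f(x_2)$ over the vertical axis and runs a bootstrap to bound $|f'|\leq 2$ and hence $|f''|\lesssim \eps_0^{-1/3}$, then uses a quadratic Taylor estimate to show $f$ must reach $-\pi/2$ within a small vertical distance; in the shallow case it writes the curve as a graph over $x_1$ with $|g'|\leq 2$ and concludes immediately. Your approach unifies the two cases by writing the curve as a graph $x_2 = F(x_1)$ over the horizontal axis throughout, and observing that the curvature equals $\bigl|\tfrac{d}{dx_1}\, g(F')\bigr|$ with $g(u) = u/\sqrt{1+u^2}$, so the curvature bound directly controls the drift of $g(F')$ by $\eps_0^{-1/3}d$. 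The decisive input, which replaces the paper's case split, is the elementary bound $1 - |g(m)| \geq \tfrac{1}{2(1+m^2)}$; combined with the scaling $|m_0|^2 d < \eps_0^{1/3}/100$ built into \eqref{est:lemlargeslopeaux1} and $d \leq \eps_0^{1/3}/400$, this keeps $|g(F')|$ uniformly below $1 - 39d/\eps_0^{1/3}$, so the tangent never becomes vertical, the graph extends all the way to $x_1 = -\pi/2$, and the mean value theorem gives the required height estimate. All the quantitative steps check out (the inequality $1 - |g(m)| \geq 1/(2(1+m^2))$ follows from $m\sqrt{1+m^2}\leq 1+m^2$; the inversion of $g$ gives $|F'|\leq \eps_0^{1/6}/\sqrt{39d}$; and $|F(-\pi/2) - y_2| \leq \eps_0^{1/3}/(20\sqrt{39})$, which together with $|y_2 - h(t)| \leq \eps_0 < 1/100$ places the endpoint strictly inside $(h(t)-\eps_0^{1/3}, h(t)+\eps_0^{1/3})$). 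Both proofs ultimately invoke the symmetry (via Corollary \ref{cor:positions}) to identify the endpoint on $\{x_1 = -\pi/2\}$ with a point of $\Gamma_t$. The paper's version is closer to the geometric picture of a tangent circle (Figure \ref{fig:lemma1}); yours is more analytic but avoids the dichotomy and produces a single clean slope bound valid across the entire hypothesized range of $|m_0|$.
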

Given Lemma \ref{lem:largeslopeaux1}, we prove Proposition \ref{prop:largeslope}. 

\begin{proof}[Proof of Proposition \ref{prop:largeslope}]
For the sake of contradiction, we assume that \eqref{est:largeslope} does not hold. On the one hand, we recall that $z(-\pi/2)$ with $z_2(-\pi/2) \ge 0$ is the only point on $\Gamma_t$ that intersects $\{x_1 = -\pi/2\}$ by the symmetry assumptions. This implies that $\Gamma_t$ cannot intersect the half-line $\{(-\pi/2, x_2),\; x_2 < 0\}$. 

On the other hand, since we also assume \eqref{est:contradiction}, we recall from \eqref{est:Depslb} that $|D_\eps^+ \cap \{x_2 < 0\}| > 500C_0^{1/3}\delta(t)^{{1/15}}$. Then by the definition of $h(t)$, we must have 
$$
h(t) + \eps_0^{1/3} \le -\frac{500}{\pi}C_0^{1/3}\delta(t)^{{1/15}} + \left({\frac52}C_0\right)^{1/3}\delta(t)^{{1/15}} < 0.
$$
However, this fact combined with Lemma \ref{lem:largeslopeaux1} implies that $\Gamma_t$ intersects the half-line $\{(-\pi/2, x_2),\; x_2 < 0\}$ given $\eps_0$ sufficiently small. This leads to a contradiction.
\end{proof}

We conclude this section by proving Lemma \ref{lem:largeslopeaux1}:
\begin{proof}[Proof of Lemma \ref{lem:largeslopeaux1}]
    To begin with, we additionally assume that $\left|\frac{\dot{z_2}(\beta)}{\dot{z_1}(\beta)}\right| \ge 1$. We will explain how to drop this extra assumption by the end of the proof. In this case, we may write $\Gamma_t$ as the graph $\{(f(x_2), x_2)\}$ locally around the point $z(\beta) = (y_1, y_2) \in \Gamma_t$ with $y_1 = f(y_2) = d -\pi/2$. Then the condition $1 \le \left|\frac{\dot{z_2}(\beta)}{\dot{z_1}(\beta)}\right| < \frac{\eps_0^{1/6}}{10d^{1/2}}$ is equivalent to
    \begin{equation}
        \label{est:largeslopeaux1-1}
        \frac{10d^{1/2}}{\eps_0^{1/6}} < |f'(y_2)| \le 1.
    \end{equation}
    We may without loss of generality assume that $f'(y_2) > 0$, as otherwise the argument follows in a similar fashion. Given the curvature bound $\calK(t) < \eps_0^{-1/3}$, we first observe that $f'(z)$ is uniformly bounded in an $\eps_0^{1/3}$ neighborhood of $y_2$. More precisely, we claim that for any $z \in (y_2 - \frac{\eps_0^{1/3}}{30}, y_2 + \frac{\eps_0^{1/3}}{30})$, 
    \begin{equation}\label{est:largeslopeaux1-2}
    |f'(z)| \le 2.
    \end{equation}
    We show the bound \eqref{est:largeslopeaux1-2} via a bootstrap argument: assuming \eqref{est:largeslopeaux1-2} and using the definition of scalar curvature, we have
    \begin{equation}\label{est:largeslopeaux1-3}
    |f''(z)| \le \calK(t)(1 + |f'(z)|^2)^{3/2} \le 15\eps_0^{-1/3},
    \end{equation}
    for any $z \in \left(y_2 - \frac{\eps_0^{1/3}}{30}, y_2 + \frac{\eps_0^{1/3}}{30}\right)$. Then for such $z$, we have
    \begin{align*}
        |f'(z)| &\le |f'(y_2)| + |f'(y_2) - f'(z)| \le 1 + \int_z^{y_2} |f''(\xi)|d\xi \le 1 + \frac{\eps_0^{1/3}}{30}\cdot 15\eps_0^{-1/3} \le \frac{3}{2} < 2,
    \end{align*}
    where we used \eqref{est:largeslopeaux1-1} and \eqref{est:largeslopeaux1-3} above. This closes the bootstrap argument. Note that an important consequence of \eqref{est:largeslopeaux1-2} is the uniform bound \eqref{est:largeslopeaux1-3} for the second derivative. For any $z \in (y_2 - \eps_0^{1/3}/30, y_2)$, we observe that
    \begin{align*}
        f'(z) \ge f'(y_2) - |f'(y_2) - f'(z)| \ge \frac{10d^{1/2}}{\eps_0^{1/3}} - \int_z^{y_2} |f''(\xi)| d\xi \ge \frac{10d^{1/2}}{\eps_0^{1/6}} -15\frac{y_2 - z}{\eps_0^{1/3}},
    \end{align*}
    where we used \eqref{est:largeslopeaux1-1} and \eqref{est:largeslopeaux1-3} above. Hence, we estimate $f(z)$ by:
    \begin{align*}
        f(z) &= f(y_2) - \int_z^{y_2} f'(\xi) d\xi \le d - \frac{\pi}{2}+\int_z^{y_2} \left(15\frac{y_2 - z}{\eps_0^{1/3}} - \frac{10d^{1/2}}{\eps_0^{1/6}}\right) d\xi\\
        &= \frac{15}{2\eps_0^{1/3}}(y_2 - z)^2 - \frac{10d^{1/2}}{\eps_0^{1/6}}(y_2 - z) + d - \frac{\pi}{2}.
    \end{align*}
    Upon inspecting the quadratic polynomial $p(X) = \frac{15}{2\eps_0^{1/3}}X^2 - \frac{10d^{1/2}}{\eps_0^{1/6}}X + d$, we know that $p(\frac{2}{3}d^{1/2}\eps_0^{1/6}) < 0$. Since $d \le \frac{1}{400}\eps_0^{1/3}$, we have $\frac{2}{3}d^{1/2}\eps_0^{1/6} \le \frac{\eps_0^{1/3}}{30}$ and indeed $f(y_2 - \frac{2}{3}d^{1/2}\eps_0^{1/6}) \le -\frac{\pi}{2}$. This implies that there exists $\overline{y_2} \in [y_2 - \frac{2}{3}d^{1/2}\eps_0^{1/6}, y_2]$ such that $f(\overline{y_2}) = -\frac{\pi}{2}$. Finally, since $|y_2 - h(t)| \le \eps_0$ by assumption, 
    \[
    |\overline{y_2} - h(t)| \le |\overline{y_2} - y_2| + |y_2 - h(t)| \le \frac{\eps_0^{1/3}}{30} + \eps_0 < \frac{\eps_0^{1/3}}{15},
    \]
    given $\eps_0 > 0$ sufficiently small. We have thus proved the desired result.

    Finally, we explain how to lift the extra assumption that $\left|\frac{\dot{z_2}(\beta)}{\dot{z_1}(\beta)}\right| \ge 1$. Suppose $\left|\frac{\dot{z_2}(\beta)}{\dot{z_1}(\beta)}\right| < 1$ and write $(y_1, y_2) = (z_1(\beta), z_2(\beta))$. We may locally write $z(\beta)$ as a graph of $y_2 = g(y_1)$. Then an identical argument to that showing \eqref{est:largeslopeaux1-2} would yield $g'(y') \le 2$ for any $y' \in (y_2 - \frac{\eps_0^{1/3}}{30}, y_2 + \frac{\eps_0^{1/3}}{30})$. Using $d<\frac{\eps_0^{1/3}}{400}$, the graph of $g$ can at most go up a distance $2d$ before hitting the vertical line $y_1=-\frac{\pi}{2}$, thus concluding the proof.
\end{proof}

A picture demonstrating the idea of Lemma \ref{lem:largeslopeaux1} can be seen in Figure \ref{fig:lemma1}, which depicts the ``worst case scenario'':
\begin{figure}[h]
    \centering
    \includegraphics[width=0.5\textwidth]{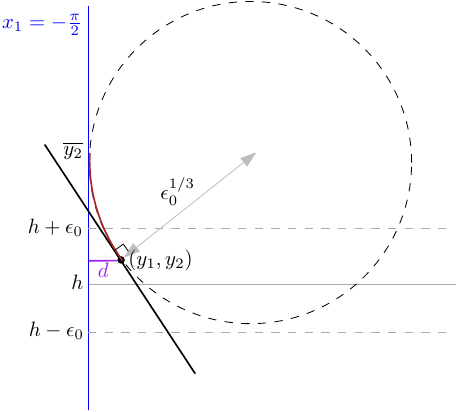}
    \caption{Illustration of Proof of Proposition \ref{prop:largeslope}. In the worst case scenario, the tangent line of the curve $z$ at $(y_1,y_2)$ is such that the tangent circle with radius $\epsilon_0^{1/3}$ is also tangent to the vertical line $x_1=-\frac{\pi}{2}$.}
    \label{fig:lemma1}
\end{figure}

\subsubsection{Proof of Proposition \ref{prop:budget}}
Finally, we prove the geometric fact given by Proposition \ref{prop:budget}. Below we provide a picture giving the worst case scenario (see Figure \ref{fig:lemma2}).

\begin{figure}[h]
    \centering
    \includegraphics[width=0.5\textwidth]{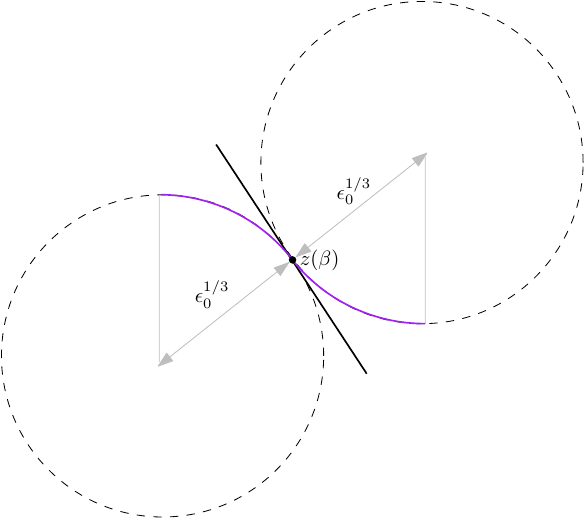}
    \caption{Illustration of the worst case scenario for Proposition \ref{prop:budget}. When the curve  $z$ coincides with the purple curve, it takes arclength $\sim \epsilon_0^{1/3}$ from the point $z(\beta)$ for the tangent to be horizontal.}
    \label{fig:lemma2}
\end{figure}

\begin{proof}[Proof of Proposition \ref{prop:budget}]
It suffices to prove the statement corresponding to $\bar{\beta}$, as the case corresponding to $\underline{\beta}$ follows similarly. If $\dot{z_1}(\alpha) \neq 0$, we define $\theta(\alpha)$ by
$$
\tan(\theta(\alpha)) = \frac{\dot{z_2}(\alpha)}{\dot{z_1}(\alpha)}.
$$
Observe that $\tan(\theta(\bar\beta)) = 0$ and
\begin{equation}\label{est:budgetaux1}
|\theta'(\alpha)| = \left|\frac{\dot{z_1}(\alpha)\ddot{z_2}(\alpha) - \ddot{z_1}(\alpha)\dot{z_2}(\alpha)}{(\dot{z_1}(\alpha))^2 + (\dot{z_2}(\alpha))^2}\right| = \frac{\left|\dot{z_1}(\alpha)\ddot{z_2}(\alpha) - \ddot{z_1}(\alpha)\dot{z_2}(\alpha)\right|}{|\dot{z}(\alpha)|^3}|\dot{z}(\alpha)| = \kappa(\alpha)|\dot{z}(\alpha)|,
\end{equation}
where $\kappa$ denotes the (unsigned) scalar curvature.

Now using the assumption of the Proposition and the fact that $\Gamma_t$ is a $C^2$ curve, there exists $\tilde{\beta} \in [\beta, \bar{\beta})$ such that $1 = |\tan(\theta(\tilde\beta))| > |\tan(\theta(\gamma))|$ for any $\gamma \in (\tilde\beta, \bar{\beta}]$. Note that from our choice of $\tilde\beta$, $|\dot{z_1}(\gamma)| > 0$ for any $\gamma \in (\tilde\beta, \bar{\beta}]$. Moreover, we have the following improved lower bound for $|\dot{z_1}(\gamma)|$:
\begin{equation}\label{est:budgetaux2}
|\dot{z}(\gamma)| = |\dot{z_1}(\gamma)|\sqrt{1 + \tan^2(\theta(\gamma))} < \sqrt{2}|\dot{z_1}(\gamma)|.
\end{equation}
Using \eqref{est:budgetaux1}, we have that
\begin{align*}
    1 &= \left|\tan(\theta(\tilde\beta)) - \tan(\theta(\bar\beta))\right| = \left|\int_{\tilde\beta}^{\bar\beta} \sec^2(\theta(\gamma))\theta'(\gamma) d\gamma\right|\\
    &\le \int_{\tilde\beta}^{\bar\beta} \frac{|\dot{z}(\gamma)|^2}{|\dot{z_1}(\gamma)|^2}|\kappa(\gamma)||\dot{z}(\gamma)|d\gamma\\
    &\le 2\eps_0^{-1/3}\int_{\tilde\beta}^{\bar\beta}|\dot{z}(\gamma)|d\gamma,
\end{align*}
where we used \eqref{est:budgetaux2} and the assumption $\calK(t) \le \eps_0^{-1/3}$ in the final inequality. We thus conclude the proof after rearranging the inequality above and using the definition of $\tilde\beta$.
\end{proof}

\subsection{Proof of Theorem \ref{thm:main}}
We conclude by proving the main result, Theorem \ref{thm:main}.
\begin{proof}[Proof of Theorem \ref{thm:main}]
    Let $\eta$ be any positive number which satisfies the following inequality:
    $$
    \eta^{{17/15}} < \frac{1}{2000}C_0^{-1/3}\left(\frac{17}{4\pi}\right)^{1/15},
    $$
    where we recall that $C_0$ is the constant defined in Lemma \ref{lem:h-2}.
    Following the plan in Section \ref{sec:organization}, we discuss the following two cases:
    
    \noindent\textbf{Case 1: $\limsup_{t\to\infty}t^{-1/17}L(t) \ge \eta$.} 
    
    In this case, we observe that for arbitrary $\eps > 0$,
    \begin{equation}
        \label{est:mainaux1}
        \limsup_{t\to\infty}t^{-1/17 +\eps} L(t) = \infty.
    \end{equation}
    Indeed, supposing otherwise, we have
    $$
   \eta \le \limsup_{t\to\infty}t^{-1/17}L(t) \le \left(\limsup_{t\to\infty} t^{-1/17+\eps}L(t)\right)\left(\limsup_{t\to\infty} t^{-\eps}\right) = 0,
    $$
    which is a contradiction.

    \noindent\textbf{Case 2: $\limsup_{t\to\infty}t^{-1/17}L(t) < \eta$.} 

    In this case, there exists $T > 0$ sufficiently large such that for all $t \ge T$, 
    \begin{equation}\label{est:case2Lbound}
    \calL(t) = \frac{L(t)}{2} < \eta t^{1/17}.
    \end{equation}
    Combining this inequality with Lemma \ref{lem:perimeterlb} and the fact that $\calL(t) \ge M(t) + m(t)$, we have
    \begin{equation}\label{est:case2Ebound}
    E(t) \le \pi\eta^2 t^{2/17}.
    \end{equation}
    In view of \eqref{eq:potential}, there is a sequence of times $t_n \nearrow \infty$, $t_n \ge T$, such that
    \begin{equation}\label{est:case2deltabound}
    \delta(t_n) \le \pi\eta^2\cdot \frac{4}{17}t_n^{-15/17}.
    \end{equation}
    Indeed, suppose the statement above were not true. Then for any $t \ge T$, we have
    \begin{align*}
        E(t) &= E(T) + \int_T^t \delta(s) ds \ge E(T) + 2\pi\eta^2 (t-T)^{2/17} \ge 2\pi\eta^2 (t-T)^{2/17},
    \end{align*}
    where we recall that $E(T) \ge E(0) \ge 0$. By choosing $t$ sufficiently large, we would have $E(t) \ge 2\pi\eta^2 (t-T)^{2/17} > \pi\eta^2 t^{2/17}$, which contradicts \eqref{est:case2Ebound}.
    
    Given \eqref{est:case2deltabound}, we claim that the following inequality must hold for all $t_n$:
    \begin{equation}
        \label{est:case2Lbound2}
        \calL(t_n) < \frac{1}{2000}C_0^{-1/3}\delta(t_n)^{-1/15}.
    \end{equation}
    If not, then applying \eqref{est:case2deltabound} we have
    \begin{align*}
        \calL(t_n) \ge \frac{1}{2000}C_0^{-1/3}\left(\pi\eta^2\cdot \frac{4}{17}t_n^{-15/17}\right)^{-1/15} > \eta t_n^{1/17},
    \end{align*}
    where we used the definition of $\eta$ in the final inequality. However, since $t_n \ge T$, the inequality above contradicts \eqref{est:case2Lbound}. Then by the Key Lemma \ref{lem:case2main2}, we conclude that
    \begin{equation}\label{est:mainaux2}
    \calK(t_n) \ge \left({\frac{5}{2}}C_0\right)^{-1/3}\delta(t_n)^{-1/15} \ge c_1 t_n^{1/17},
    \end{equation}
    where $c_1$ is a universal constant. Here, we also used \eqref{est:case2deltabound}. However, \eqref{est:mainaux2} immediately implies that for arbitrary $\eps > 0$,
    \begin{equation}
        \label{est:mainaux3}
        \limsup_{t\to\infty}t^{-1/17+\eps}\calK(t) = \infty.
    \end{equation}
    The proof is thus completed after we combine \eqref{est:mainaux1} and \eqref{est:mainaux3}.
\end{proof}

\section{Estimate on the Evolution of the Number of Fingers}\label{sec:relaxation}
In this section we  give an estimate on the number of fingers that could be developed by the flow. In order to do that we recall the following result (see \cite{grujic2000spatial} and the references therein)
\begin{lem}\label{grujic}
Let $L$, $\tau>0$, and let $u$ be analytic in the neighborhood of $\{\alpha+i\beta: |\beta|\leq \tau\}$ and $L$-periodic in the $\alpha$-direction. Then, for any $\mu>0$, $[0, L]=I_\mu\cup R_\mu$, where $I_\mu$ is an union of at most $[\frac{2L}{\tau}]$ intervals open in $[0, L]$, and
\begin{itemize}
\item $|\partial_\alpha u(\alpha)| \leq \mu, \text{ for all }\alpha\in I_\mu,$
\item $\text{Card}\{\alpha \in R_\mu : \partial_\alpha u(\alpha)=0\}\leq
\frac{2}{\log 2}\frac{L}{\tau}\log\left(\frac{\max_{|\beta|\leq \tau}|\partial_\alpha u(\alpha+i\beta)|}{\mu}\right).$
\end{itemize}
\end{lem}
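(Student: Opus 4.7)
The approach is to apply Lemma \ref{grujic} with $L = 2\pi$ and $\tau = \nu^*$ (so the count of intervals is at most $[2L/\tau] = [4\pi/\nu^*]$, exactly as in the statement), and to \emph{define} the upper bound function as
\[
F(t) \,:=\, \frac{2}{\log 2}\cdot \frac{2\pi}{\nu^*}\,\log\!\left(\frac{M(t)}{\mu}\right), \qquad M(t) \,:=\, \max_{|\beta|\leq \nu^*} \bigl|\partial_\alpha g(\alpha + i\beta, t)\bigr|.
\]
With this choice the upper bound on the cardinality in the theorem is automatic from Lemma \ref{grujic}. The theorem therefore reduces to two separate tasks for a suitable family of initial data $g_0$: (a) show the solution $g(\alpha, t)$ is analytic in the strip $|\beta| \leq \nu^*$ on $[0, T]$, and (b) prove the lower bound $M(t) \geq \mu\exp(c\sqrt{t})$ for some $c = c(\nu^*, g_0) > 0$, which immediately gives $F(t) \geq C(\nu^*, \mu, g_0)(1 + \sqrt{t})$.

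For task (a) I would start from the global well-posedness in $C^{k,\gamma}$ established in \cite{GGS25} and propagate analyticity by an abstract Cauchy--Kovalevskaya argument adapted to the $\alpha$-periodic Stokeslet kernel $\mathcal{S}$. The exponential decay of $\mathcal{S}(x_1, x_2)$ in $x_2$ makes the kernel analytic in an $x_2$-strip, which lets one run the standard scheme (in the spirit of the analytic stability results of \cite{GGS25_SIMA}) and conclude that the analyticity radius contracts at most at a controlled rate in $t$. Choosing the initial radius $\nu_0$ large enough relative to $T$ then fixes a positive $\nu^* = \nu^*(\nu_0, T, g_0)$ uniformly on $[0, T]$.

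For task (b), which is the heart of the argument, I would take $g_0$ to be a small analytic perturbation of an unstable flat equilibrium, with its Fourier spectrum $\widehat{g}_0(k)$ decaying like $e^{-\nu_0 |k|}$ but supported on modes seeding the linear instability of the Stokes system in the Rayleigh--Taylor unstable regime. A perturbative comparison with the linearized flow (whose unstable eigenvalues $\lambda_k$ satisfy $\lambda_k \leq \lambda_* < \infty$) yields $|\widehat{g}(k, t)| \geq c|\widehat{g}_0(k)| e^{\lambda_k t}$ for all $k$ in the unstable band, provided we bootstrap an analytic-norm smallness in the strip. The sup-norm in the strip then admits the lower bound
\[
M(t) \,\geq\, c \sum_k |k|\,|\widehat{g}_0(k)|\,e^{\lambda_k t}\,e^{\nu^* |k|} \,\geq\, c \sum_k |k|\,e^{-(\nu_0 - \nu^*) |k|}\,e^{\lambda_k t},
\]
and a saddle-point analysis of this sum, where the optimal mode scales as $|k_*(t)| \sim \sqrt{t}$ because of the trade-off between the exponential gain $e^{\lambda_k t}$ and the exponential loss $e^{-(\nu_0 - \nu^*) |k|}$, produces $M(t) \geq \mu\,e^{c\sqrt{t}}$.

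The main obstacle will be task (b): controlling the nonlinear evolution well enough to transfer the linear exponential-in-$t$ growth of many individual Fourier modes into a simultaneous growth of $M(t)$ in a \emph{fixed} strip. The linear spectrum alone would suggest the stronger bound $M(t) \sim e^{c t}$, but keeping the strip of analyticity pinned at $\nu^*$ forces the saddle-point trade-off above and drops the effective rate to $e^{c\sqrt{t}}$. I would close the bootstrap by choosing the initial amplitude of $g_0$ small in a $T$-dependent way, so that the nonlinearities remain subdominant to the linear flow on $[0, T]$ and the Fourier lower bound survives.
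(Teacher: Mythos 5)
Your proposal does not address the statement you were asked to prove. Lemma~\ref{grujic} is a purely complex-analytic fact about the distribution of critical points of an $L$-periodic function analytic in a horizontal strip of half-width $\tau$; it has nothing to do with the Stokes-Transport system, the initial data $g_0$, the strip of analyticity $\nu^*$ of the evolving interface, or the quantity $F(t)$. What you have written is a (sketched) proof of Theorem~\ref{thm:2}, which \emph{applies} Lemma~\ref{grujic}. In the paper itself, Lemma~\ref{grujic} is quoted without proof from Gruji\'c's work \cite{grujic2000spatial}, so the ``paper's own proof'' you are meant to compare against is external; the relevant point is that you should have recognized the lemma as an independent complex-analysis ingredient and proved that, not the downstream theorem.

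A direct proof of Lemma~\ref{grujic} runs roughly as follows, and is what you should have aimed at. Set $f := \partial_\alpha u$, which is analytic and $L$-periodic in the same strip, and let $M := \max_{|\beta|\le\tau}|f(\alpha+i\beta)|$. Cover $[0,L]$ by $N \le \bigl[\frac{2L}{\tau}\bigr]$ closed intervals $J_1,\dots,J_N$ of length $\tau/2$. For each $J_k$, either $|f|\le\mu$ everywhere on $J_k$, in which case the interior of $J_k$ is placed into $I_\mu$, or there is a point $a_k\in J_k$ with $|f(a_k)|>\mu$. In the latter case, $f$ is analytic on the disk $D_\tau(a_k)$ with $\max_{D_\tau(a_k)}|f|\le M$ and $|f(a_k)|>\mu$, so the classical zero-counting estimate (a consequence of Jensen's formula: the number of zeros of $f$ in $D_r(a)$ is at most $\frac{1}{\log(R/r)}\log\frac{\max_{D_R(a)}|f|}{|f(a)|}$ for $r<R$), applied with $R=\tau$, $r=\tau/2$, bounds the number of zeros of $f$ in $D_{\tau/2}(a_k)\supset J_k$ by $\frac{1}{\log 2}\log\frac{M}{\mu}$. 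Taking $R_\mu$ to be the union of those $J_k$ not in $I_\mu$ and summing the zero counts over the at most $\frac{2L}{\tau}$ intervals gives exactly $\mathrm{Card}\{\alpha\in R_\mu : f(\alpha)=0\}\le \frac{2}{\log 2}\frac{L}{\tau}\log\frac{M}{\mu}$, as claimed. Your Cauchy--Kovalevskaya/saddle-point machinery is entirely beside the point here; it belongs to the proof of Theorem~\ref{thm:2} (where, incidentally, the paper does not construct an instability from scratch via a Fourier saddle point as you suggest but rather time-reverses an analytically decaying solution supplied by \cite{GGS25_SIMA}, obtaining the $e^{C\sqrt{T}}$ growth directly from Theorem 4 there).
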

\begin{proof}[Proof of Theorem \ref{thm:2}]
Let us introduce the following space of analytic functions
\begin{equation}\label{wiener}
A^{s}_\nu=\{u\in L^2(\mathbb{T}): \sum_{k=-\infty}^\infty e^{\nu |k|}|k|^{s}|\hat{u}(k)|<\infty\},\quad\mbox{where}\quad \hat{u}(k)=\frac1{2\pi}\int_\T u(\alpha)e^{-ik\alpha}d\alpha.
\end{equation}
In our case, fixing $\nu_0$ and invoking Theorems 2 and 3 in \cite{GGS25_SIMA}, we can ensure the existence of initial data $h_0(\alpha)$ such that
\begin{enumerate}
\item the corresponding solutions are analytic in a strip of width $\nu^*\leq \nu_0/24$,
\item the corresponding solutions satisfy the bound
$$
\max_{|\beta|\leq \nu^*}|\partial_\alpha h(\alpha+i\beta,t)|\leq \|h_0\|_{A^1_{\nu_0}}.
$$
\end{enumerate}
If we now fix $T$ and define the functions
$$
g(\alpha,t)=h(\alpha,T-t),
$$
we have that $g(\alpha,t)$ satisfy the RT unstable Stokes-Transport system. Recalling the ideas of Theorem 4 in \cite{GGS25_SIMA}, we have that
$$
\|g(T)\|_{A^1_{\nu^*}}\geq \|g(T)\|_{A^0_{\nu^*}}\geq C_1e^{C_2 \sqrt{T}}\|g_0\|_{A^0},
$$
where $C_1, C_2$ are harmless constants only depending on the parameters $\nu^\ast, \rho^+, \rho^-$ and the initial data. Invoking the previous lemma we can ensure the desired result: the spatial domain can be split as $[-\pi, \pi]=I_\mu\cup R_\mu$, where $I_\mu$ is an union of at most $[\frac{4\pi}{\nu^*}]$ open intervals and
\begin{itemize}
\item $|\partial_\alpha g(\alpha,t)| \leq \mu, \text{ for all }\alpha\in I_\mu,$
\item $\text{Card}\{\alpha \in R_\mu : \partial_\alpha g(\alpha,T)=0\}\leq
F(T),$
\end{itemize}
where the growth of $F(t)$ is bounded below as follows
$$
C(1+\sqrt{T})\leq \frac{2}{\log 2}\frac{2\pi}{\nu^*}\log\left(\frac{C_1e^{C_2 \sqrt{T}}\|g_0\|_{A^0}}{\mu}\right)\leq F(T).
$$
\end{proof}

\section{Rayleigh-Taylor Breakdown}\label{sec:RTbreakdown}

In this section, we prove the third main result of this article, namely Theorem \ref{thm:turning}.

\begin{proof}
    We solve the CDE for some time length $T>0$ in $[-T,T]$. It was proved in \cite{GGS25} that the system is globally well-posed and time reversible for smooth initial data. We denote by $z(\alpha,t)$ the free interface resulting of the evolution. Now, let us define 
    \begin{equation*}
        l(t) := \min_\alpha \partial_\alpha z_1(\alpha,t).
    \end{equation*}
    Then, at time $t=0$, the minimum is attained only at $\alpha =0$ because of condition 2, and $l(0) = 0$. Moreover, the following dichotomy holds: 
    \begin{itemize}
        \item If $\partial_\alpha u_1(0,0) = \partial_\alpha \partial_t {z}_1(0,0) >0$, then, there is a $\varepsilon >0$ such that for the time length $t < \varepsilon < T$, it holds $l(t) >0$, and therefore $\partial_\alpha z_1(\alpha,t) >0$ and the curve can be parametrized by a graph. 
        \item If  $\partial_\alpha u_1(0,0) = \partial_\alpha \partial_t {z}_1(0,0) <0$, then for a short time, $l(t) <0$ and the curve fails to be parametrized as a graph.
    \end{itemize}
    Therefore, the condition $\partial_\alpha u_1(0,0) = \partial_\alpha \partial_t {z}_1(0,0) <0$ ensures that there exists a time length $|t|<\varepsilon$ where: $z(\alpha,t)$ is parametrized by a graph in $t \in (-\varepsilon,0)$, has a vertical tangent at $t = 0$, and cannot be parametrized by a graph in $t \in (0,\varepsilon)$.
\end{proof}

All that is missing is to prove that there exist initial data $\bar{z}$ meeting the conditions of the theorem above. In order to prove that, we will first derive a simplified Contour Dynamics Equation that is better suited to verify condition 3 of the theorem. 

\begin{prop}
    Let us consider the Stokes-Transport system in its Contour Dynamics form \eqref{eq:cde}. Then,  
    \begin{equation} \label{eq:cde2}
        \partial_t z(\alpha,t) = (\rho^--\rho^+) \int_\mathbb{T} \partial_1 K(z(\alpha,t)-z(\beta,t))  \dot z(\beta,t) d\beta, \quad \alpha \in \mathbb{T}
    \end{equation}
    is an equivalent Contour Dynamics Equation, where $K(x)$ is the fundamental solution to the bilaplacian equation, that is 
    $$ \Delta^2 K(x) = \delta(x), \quad x \in \mathbb{T} \times \mathbb{R}.$$
\end{prop}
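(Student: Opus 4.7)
The plan is to derive \eqref{eq:cde2} directly from the weak formulation in Definition \ref{defn:weak}; equivalence with \eqref{eq:cde} then follows from Proposition \ref{prop:equivalence}. Starting from $u = \nabla^\perp \Delta^{-2} \p_1 \rho$ and using that $\Delta^{-2}$ acts as convolution with $K$ and that derivatives commute with convolution, I would rewrite $u(x) = \p_1 \nabla^\perp (K * \rho)(x)$.

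The main step is to identify $V(x) := \nabla^\perp (K*\rho)(x)$ with a line integral over $\Gamma(t)$. After moving $\nabla^\perp$ from $x$ onto $y$ (which produces a minus sign) and splitting region-wise,
\begin{equation*}
V(x) = -\rho^+ \int_{\Omega^+(t)} \nabla^\perp_y K(x-y)\,dy - \rho^- \int_{\Omega^-(t)} \nabla^\perp_y K(x-y)\,dy,
\end{equation*}
I would apply the divergence theorem in each region to reduce the bulk integrals to boundary integrals on $\Gamma(t)$. Parametrizing $\Gamma(t)$ by $z(\beta,t)$ and using the orientation relations $(n^+)^\perp\,d\sigma = \dotz(\beta,t)\,d\beta$ and $(n^-)^\perp\,d\sigma = -\dotz(\beta,t)\,d\beta$ for the outward normals $n^\pm$ of $\Omega^\pm$, the two contributions combine into
\begin{equation*}
V(x) = (\rho^- - \rho^+) \int_\T K(x - z(\beta,t))\,\dotz(\beta,t)\,d\beta.
\end{equation*}
Applying $\p_1$ and evaluating at $x = z(\alpha,t)$, together with the kinematic identity $\p_t z(\alpha,t) = u(z(\alpha,t),t)$, yields exactly \eqref{eq:cde2}.

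The hard part will be justifying the divergence theorem on the unbounded regions $\Omega^\pm$: on $\T \times \R$ the bilaplacian Green's function $K$ has a zero Fourier mode in $x_1$ that grows polynomially in $|x_2|$, so $K * \rho$ need not even be well-defined. I would handle this by first replacing $\rho$ with $\rho - \rho_s$, which is permissible because $\p_1 \rho_s \equiv 0$ yields $u = \p_1\nabla^\perp (K*(\rho - \rho_s))$. Since $\rho - \rho_s$ is supported in the bounded-in-$x_2$ strip between $\Gamma(t)$ and $\{x_2 = 0\}$, the divergence theorem now applies on the four bounded regions $\Omega^\pm \cap \{x_2 \gtrless 0\}$. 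The artificial boundary contributions along $\{x_2 = 0\}$ reduce to integrals of $K$ in $y_1$ over $\T$, hence depend only on $x_2$, and are therefore killed by the outer $\p_1$; this recovers the identity above and completes the proof.
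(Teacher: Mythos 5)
Your proposal is correct and arrives at the same formula, but the paper distributes the derivatives more economically. The paper writes $u=\nabla^\perp\Delta^{-2}\p_1\rho = \p_1 K * \nabla^\perp\rho$, putting $\p_1$ on $K$ and $\nabla^\perp$ on $\rho$, and then invokes in one line the distributional identity
\[
\nabla\rho = -(\rho^--\rho^+)\,\dotz^\perp(\alpha,t)\,\delta(x - z(\alpha,t)),
\]
so that $\nabla^\perp\rho$ is a compactly supported (in $x_2$) measure on $\Gamma(t)$ and $\p_1 K$ is regular and decaying (its $k_1=0$ Fourier mode vanishes); the convolution is then manifestly well-defined and yields \eqref{eq:cde2} directly. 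You instead form the intermediate object $K*\rho$, move $\nabla^\perp$ to the $y$-variable, split by region, and apply the divergence theorem on $\Omega^\pm$ to convert bulk integrals to line integrals over $\Gamma(t)$; this is essentially a proof of the distributional gradient formula the paper asserts. The cost of your route is exactly the issue you flagged: $K*\rho$ is ill-defined because $K$'s zero $x_1$-mode grows polynomially in $x_2$ while $\rho$ does not decay, and you must patch this with the $\rho\mapsto\rho-\rho_s$ substitution, bounded regions, and an argument that the artificial boundary contributions along $\{x_2=0\}$ are killed by the outer $\p_1$. All of this is legitimate, and your orientation bookkeeping $(n^+)^\perp\,d\sigma = \dotz\,d\beta$ together with $n^-=-n^+$ correctly produces the factor $(\rho^--\rho^+)$. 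In short: same computation at heart, but the paper's choice of which factor each derivative lands on sidesteps the regularization you had to introduce, making the argument shorter; your version makes explicit the divergence-theorem content hidden in the paper's one-line assertion of \eqref{eq:gradden}.
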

\begin{proof}
    We recall the general expression of the velocity field given in \eqref{eq:Stokes2BS} and make the following alternative distribution of the derivatives (in a weak sense):
    \begin{equation} 
    \label{eq:Stokes3}
         u = \nabla^\perp \Delta^{-2}\p_1 \rho = \p_1 K \ast \nabla^\perp \rho.
    \end{equation}
    It is not difficult to see that the weak gradient of $\rho(x,t)$ is given by
    \begin{equation} \label{eq:gradden}
        \nabla \rho(x,t) = - (\rho^--\rho^+)  \dot z^\perp(\alpha,t)\delta (x-z(\alpha,t)).
    \end{equation}
    Joining \eqref{eq:Stokes3} and \eqref{eq:gradden} together with the regularity of $\partial_1K$ provide the continuity of the velocity field across the interface and therefore \eqref{eq:cde2}.
\end{proof}

\begin{rmk}
    The kernel $\p_1 K$ is not available as an explicit combination of standard functions. Instead, it is given in \cite{GGS25_SIMA} as the infinite sum 
    \begin{equation*}
        \partial_1 K(x) = -\frac{1}{4\pi} \sum_{n= 1}^\infty  \frac{n|x_2|+1}{n^2} e^{-n|x_2|} \sin(nx_1).
    \end{equation*}
    However, the second derivatives of $K$ have an explicit expression (they form the Stokeslet) and we will only need 
    \begin{equation*}
        \p_1 \p_2 K(x) = \frac{1}{8\pi}\frac{x_2 \sin(x_1)}{\cosh(x_2)-\cos(x_1)}
    \end{equation*}
    for our result.
\end{rmk}

The final result of this section shows the existence of smooth initial data that evolves into a turning instability. 

\begin{prop} \label{prop:turning1}
    There exists a family of curves that meets the conditions 1-3 in Theorem \ref{thm:turning}.
\end{prop}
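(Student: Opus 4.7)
The plan is to exhibit a concrete family of curves $\bar z = (\bar z_1, \bar z_2)$ and verify conditions 1--3 of Theorem \ref{thm:turning} directly. Conditions 1 and 2 can be arranged by an elementary algebraic choice of the components; condition 3 reduces, via the Contour Dynamics Equation \eqref{eq:cde2}, to the strict negativity of an explicit integral and is the main content of the proposition.

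A natural candidate is $\bar z_1(\alpha) = \alpha - \sin\alpha$ together with $\bar z_2(\alpha) = \sin(2\alpha)$. Both functions are smooth and odd; $\partial_\alpha \bar z_1(\alpha) = 1 - \cos\alpha$ is non-negative and vanishes only at $\alpha = 0$; and $\partial_\alpha \bar z_2(0) = 2 > 0$. Conditions 1 and 2 are therefore immediate. For condition 3, I would differentiate \eqref{eq:cde2} in $\alpha$ and evaluate at $\alpha = 0$: since $\bar z(0) = 0$ and $\dot{\bar z}(0) = (0, 2)$, only the mixed partial $\partial_1 \partial_2 K$ contributes. Using $K(-x) = K(x)$ (so that $\partial_1 \partial_2 K$ is even) together with the oddness of $\bar z_1, \bar z_2$ to fold the integrand onto $[0, \pi]$, one obtains
\[
\partial_\alpha \partial_t \bar z_1(0, 0) = \frac{\rho^- - \rho^+}{2\pi} \int_0^\pi \frac{\sin(2\beta) \, \sin(\bar z_1(\beta))}{\cosh(\sin(2\beta)) - \cos(\bar z_1(\beta))}\,(1 - \cos\beta)\, d\beta.
\]
In the stable regime $\rho^- - \rho^+ > 0$, so condition 3 amounts to this integral being strictly negative.

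To establish the sign, I would split the integral at $\beta = \pi/2$ and apply the substitution $\beta \mapsto \pi - \beta$ to the piece over $(\pi/2, \pi)$. Exploiting $\sin(2(\pi - \beta)) = -\sin(2\beta)$ and $\bar z_1(\pi - \beta) = \pi - (\beta + \sin\beta)$, the full integral can be rewritten as $\int_0^{\pi/2} \sin(2\beta)\, B(\beta)\, d\beta$ with
\[
B(\beta) := \frac{\sin(\beta - \sin\beta)(1 - \cos\beta)}{\cosh(\sin 2\beta) - \cos(\beta - \sin\beta)} - \frac{\sin(\beta + \sin\beta)(1 + \cos\beta)}{\cosh(\sin 2\beta) + \cos(\beta + \sin\beta)}.
\]
Since $\sin(2\beta) \geq 0$ on $[0, \pi/2]$, the sign of the integral is governed by $B$. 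A Taylor expansion at $\beta = 0$ gives the first fraction of order $\beta^3$ and the second of order $\beta$, so $B \sim -2\beta$ near the origin and the integrand is strongly negative there. The bulk of $(0, \pi/2)$ retains this sign because the second fraction dominates the first --- its numerator $\sin(\beta + \sin\beta)(1 + \cos\beta)$ stays of order one while its denominator is bounded, whereas the first fraction remains small until $\beta$ approaches $\pi/2$.

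The main obstacle is a fully rigorous pointwise control of $B$ on the whole interval $(0, \pi/2)$. A careful expansion near $\beta = \pi/2$ shows that both fractions blow up to a common leading value $\cos(1)/(1 - \sin(1))$, and the refined expansion indicates that $B$ may in fact change sign in a thin layer adjacent to $\pi/2$; however, that region contributes negligibly because $\sin(2\beta)$ vanishes there. Rigorously confirming that the dominant negative bulk wins over any such small positive remainder can be handled, following the strategy of \cite{castro2012rayleigh}, either (i) by explicit pointwise estimates that reduce $\sin(2\beta)B(\beta) \le 0$ on most of $(0, \pi/2)$ to a finite chain of elementary trigonometric and hyperbolic comparisons, plus a quantitative control of the $\pi/2$ layer, or (ii) by a continuity/openness argument: once the inequality is shown for this candidate, perturbations in any standard topology preserve conditions 1--3, producing an open family of admissible curves.
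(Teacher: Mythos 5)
Your reduction of condition 3 to the sign of the folded integral $\int_0^{\pi/2}\sin(2\beta)B(\beta)\,d\beta$ is correct (the constants check out, and the substitutions $\beta\mapsto-\beta$ and $\beta\mapsto\pi-\beta$ are valid). However, the proof has a genuine gap precisely at the step you flag yourself: you never establish the sign of that integral. The candidate $\bar z_2(\alpha)=\sin(2\alpha)$ forces you to fight the inequality head-on, since on $(0,\pi/2)$ the integrand is positive (and in fact the first fraction in $B$ is not small there --- the positive contribution near $\beta\approx 1.4$ is comparable in size to the negative one), so the claim that ``the second fraction dominates'' over the whole interval is not true pointwise and must be argued in the integrated sense. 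Neither of your two proposed fixes closes this: (i) the ``finite chain of elementary comparisons'' is not exhibited, and (ii) the openness/continuity argument is circular --- it only propagates an inequality you have not yet proved for a base curve. A minor but symptomatic inaccuracy: neither fraction in $B$ ``blows up'' near $\pi/2$ --- the denominators stay bounded away from zero, $\cosh(\sin 2\beta)\mp\cos(\beta\pm\sin\beta)\to 1-\sin 1>0$; both fractions simply tend to the finite common value $\cos(1)/(1-\sin 1)$.

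The paper takes a qualitatively different and more robust route. Instead of one fixed curve, it uses the one-parameter family from \cite{castro2012rayleigh}: $\bar z_1(\alpha)=\alpha-\sin\alpha$ and $\bar z_2 = b\,z^*$ on $(0,\alpha_2)$, $\bar z_2 = z^*$ on $(\alpha_2,\pi)$, with $z^*>0$ on $(0,\alpha_2)$ and $z^*\le 0$ on $(\alpha_2,\pi)$. This splits $\partial_t\partial_\alpha\bar z_1(0,0)=J_1+J_2$, where $J_2<0$ is fixed (independent of $b$) because $\bar z_2\le 0$ there and every other factor is nonnegative, while the positive contribution $J_1$ is killed as $b\to\infty$ since $\cosh(b z^*)$ grows exponentially in the denominator. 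Thus no pointwise control of an explicit integrand is needed --- the parameter does the work. The actual strategy of \cite{castro2012rayleigh}, which you invoke but do not in fact follow, is exactly this parameter-based suppression; you would need either to adopt it or to supply the missing quantitative estimate on your specific $B$.
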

\begin{proof}
We prove that the family of curves constructed in  \cite{castro2012rayleigh} fulfills conditions 1-3. We recall the construction here for completeness. The first component of the curve is defined as 
     $$\bar{z}_1(\alpha) = \alpha-\sin(\alpha), \,\, \alpha \in (-\pi,\pi]$$
     and the second component as 
    \[
\bar{z}_2(\alpha) =
\begin{cases}
b z^*(\alpha), & \text{if } 0\leq \alpha \leq \alpha_2, \\
z^*(\alpha),  & \text{if } \alpha_2 < \alpha \leq \pi,
\end{cases}
\]
where $\bar{z}_2(\alpha)$ is extended to the negative interval by odd symmetry. Here, $b>0$ is to be fixed later, $0 < \alpha_2 < \alpha_3 < \frac{\pi}{2}$ and  $z^\ast(\alpha)$ is an odd smooth function defined in $[-\pi,\pi]$ with the properties:
\begin{enumerate}[(a)]
    \item \( \left( \partial_\alpha z^* \right)(0) > 0 \),
    \item \( z^*(\alpha) > 0 \) if \( \alpha \in (0, \alpha_2) \), 
    \item \( z^*(\alpha) < 0 \) if \( \alpha \in [\alpha_2, \alpha_3) \), 
    \item \( z^*(\alpha) \leq 0 \) if \( \alpha \in [\alpha_3, \pi] \).
\end{enumerate}
 With this definition, it is clear that $\bar{z}$ fulfills conditions 1-2. It remains to show condition 3.  We set $\rho^--\rho^+ = 1$ for simplicity, as we want to show the turning of initially stable interfaces. It holds that 
\begin{equation*}
    \partial_t \partial_\alpha  z(\alpha,t) = \int_\mathbb{T} \nabla \p_1 K(  z(\alpha) - {z}(\beta)) \cdot \partial_\alpha z(\alpha) \partial_\alpha  z(\beta) d\beta .
\end{equation*}
In particular, 
\begin{align}
     \partial_t \partial_\alpha \bar z_1(0,0) &= \int_\mathbb{T} \nabla \p_1 K( \bar z(0) - \bar{z}(\beta)) \cdot \partial_\alpha \bar z(0) \partial_\alpha \bar z_1(\beta) d\beta \nonumber  \\
     & = \int_\mathbb{T} \p_2 \p_1 K(  - \bar{z}(\beta)) \partial_\alpha \bar z_2(0) \partial_\alpha \bar z_1(\beta) d\beta \nonumber  \\
     & =\partial_\alpha \bar z_2(0) \frac{1}{8\pi}  \int_\mathbb{T} \frac{\bar z_2(\beta) \sin(\bar z_1(\beta))}{\cosh(\bar z_2(\beta))-\cos(\bar z_1(\beta))} \partial_\alpha \bar z_1(\beta) d\beta \nonumber \\
     & = \partial_\alpha \bar z_2(0) \frac{1}{4\pi}  \int_{0}^\pi \frac{\bar z_2(\beta) \sin(\bar z_1(\beta))}{\cosh(\bar z_2(\beta))-\cos(\bar z_1(\beta))} \partial_\alpha \bar z_1(\beta) d\beta, \label{eq:turningint}
\end{align}
where the last step is due to the odd symmetry of $\bar{z}$. Now, we split 
\begin{equation*}
     \partial_t \partial_\alpha \bar{z}_1(0,0) = J_1 + J_2,
\end{equation*}
where
\begin{align*}
    J_1 &= \partial_\alpha \bar z_2(0) \frac{1}{4\pi}  \int_{0}^{\alpha_2}  \frac{\bar z_2(\beta) \sin(\bar z_1(\beta))}{\cosh(\bar z_2(\beta))-\cos(\bar z_1(\beta))} \partial_\alpha \bar z_1(\beta) d\beta 
\end{align*}
and 
\begin{align*}
    J_2 &= \partial_\alpha \bar z_2(0) \frac{1}{4\pi}  \int_{\alpha_2}^{{\pi}} \frac{\bar z_2(\beta) \sin(\bar z_1(\beta))}{\cosh(\bar z_2(\beta))-\cos(\bar z_1(\beta))} \partial_\alpha \bar z_1(\beta) d\beta. 
\end{align*}
Note that due to the hypotheses on $\bar{z}$, all the terms inside the integrals are non-negative except for $\bar z_2$. On the one hand, 
\begin{equation}
\lim_{b \to \infty} J_1 = \lim_{b \to \infty} \partial_\alpha \bar z_2(0) \frac{1}{4\pi}  \int_{0}^{\alpha_2}  \frac{\bar z_2(\beta) \sin(\bar z_1(\beta))}{\cosh(\bar z_2(\beta))-\cos(\bar z_1(\beta))} \partial_\alpha \bar z_1(\beta) d\beta  =0,
\end{equation}
as the leading growth in $b$ is in the denominator.
On the other hand, 
\begin{equation}
  J_2 = \partial_\alpha \bar z_2(0) \frac{1}{4\pi}  \int_{\alpha_2}^{{\pi}} \frac{\bar z_2(\beta) \sin(\bar z_1(\beta))}{\cosh(\bar z_2(\beta))-\cos(\bar z_1(\beta))} \partial_\alpha \bar z_1(\beta) d\beta < 0
\end{equation}
independently of $b$. Therefore, if we fix $b$ large enough, we have that the sum of the two integrals is negative and condition 3 is fulfilled. 
\end{proof}

\begin{prop}
    There exists a family of curves that meets the conditions 1-4 in Theorem \ref{thm:turning}.
\end{prop}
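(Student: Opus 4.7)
The plan is to modify the construction of Proposition~\ref{prop:turning1} by imposing the $\pi/2$-reflection symmetry \eqref{evensym} and then re-running the Rayleigh-Taylor sign computation. For $\bar{z}_1$, take
\[
\bar{z}_1(\alpha) = \alpha - \tfrac{1}{2}\sin(2\alpha).
\]
This is smooth and odd; a direct trigonometric check gives $\bar{z}_1(\alpha)=\pi-\bar{z}_1(\pi-\alpha)$ on $(0,\pi]$, while $\partial_\alpha \bar{z}_1(\alpha) = 2\sin^2(\alpha)\geq 0$ with zeros only at $\alpha\in\{0,\pm\pi\}$. This handles the $\bar{z}_1$ portion of conditions 1, 2 and 4; the symmetry necessarily forces the additional zero of $\partial_\alpha \bar{z}_1$ at $\alpha=\pm\pi$, which is precisely the source of the turning at the endpoints advertised in Theorem~\ref{thm:turning}.

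Next, for $\bar{z}_2$, take
\[
\bar{z}_2(\alpha) = b\,A(\alpha) - B(\alpha), \qquad b>0,
\]
where $A,B$ are fixed smooth, odd, $2\pi$-periodic functions, each invariant under $\alpha\mapsto\pi-\alpha$ on $[0,\pi]$, satisfying the following sign profile on $[0,\pi]$: $A\geq 0$, $\partial_\alpha A(0)>0$, $A>0$ on $(0,\alpha_2)\cup(\pi-\alpha_2,\pi)$, and $A\equiv 0$ on $[\alpha_2,\pi-\alpha_2]$; while $B\geq 0$, $B\equiv 0$ on $[0,\alpha_3]\cup[\pi-\alpha_3,\pi]$, and $B>0$ on a subinterval of $(\alpha_3,\pi-\alpha_3)$, for some $0<\alpha_2<\alpha_3<\pi/2$. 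The existence of such $A$ and $B$ is a routine exercise in smooth symmetrized cutoffs that vanish to infinite order at the boundary of their support. With $\bar{z}_2$ so defined, conditions 1, 4, and the $\bar{z}_2$ portion of condition 2 hold for every $b>0$.

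The delicate step is condition 3. By the formula derived in Proposition~\ref{prop:turning1},
\[
\partial_\alpha \bar{u}_1(0) = \partial_\alpha \bar{z}_2(0)\,\frac{1}{4\pi}\,I(b), \qquad I(b):=\int_{0}^{\pi}\frac{\bar{z}_2(\beta)\sin(\bar{z}_1(\beta))}{\cosh(\bar{z}_2(\beta))-\cos(\bar{z}_1(\beta))}\partial_\alpha\bar{z}_1(\beta)\,d\beta,
\]
so the task is to show $I(b)<0$ for $b$ sufficiently large. Split
\[
I(b) = \int_{0}^{\alpha_2} + \int_{\alpha_2}^{\pi-\alpha_2} + \int_{\pi-\alpha_2}^{\pi} =: I_+(b) + I_0 + I_+^\sharp(b).
\]
The identity $\cosh x-\cos y=2\sinh^2(x/2)+2\sin^2(y/2)$ shows that the kernel $(x,y)\mapsto \frac{x\sin y}{\cosh x-\cos y}$ is uniformly bounded on $\mathbb{R}^2$; together with $\bar{z}_1(\beta)\in[0,\pi]$ and $\partial_\alpha\bar{z}_1\in L^\infty$ this provides a $b$-independent $L^1$ dominant for the integrands of $I_+$ and $I_+^\sharp$. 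Since $A(\beta)>0$ pointwise on $(0,\alpha_2)\cup(\pi-\alpha_2,\pi)$, one has $\tfrac{bA(\beta)}{\cosh(bA(\beta))}\to 0$ there, and dominated convergence gives $I_+(b),\,I_+^\sharp(b)\to 0$. The middle piece $I_0$ is $b$-independent and strictly negative: on $[\alpha_2,\pi-\alpha_2]$ one has $A\equiv 0$, so $\bar{z}_2=-B\leq 0$ and is strictly negative on a set of positive measure, while $\sin(\bar{z}_1)>0$ because $\bar{z}_1$ is strictly increasing through $(0,\pi)$ on this interval, and $\partial_\alpha\bar{z}_1>0$ there. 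Hence $I(b)\to I_0<0$, so $I(b)<0$ for $b$ large, and combining with $\partial_\alpha \bar{z}_2(0)=b\,\partial_\alpha A(0)>0$ delivers condition 3.

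The chief obstacle, absent from Proposition~\ref{prop:turning1}, is the mirrored ``tall positive bump'' of $\bar{z}_2=bA$ near $\alpha=\pi$ forced by the $\pi/2$-symmetry: this generates the integral $I_+^\sharp(b)$, which must be shown to vanish in the limit $b\to\infty$ despite containing a large positive density. It is precisely for this term that boundedness of the interaction kernel together with dominated convergence is required, rather than a direct pointwise estimate. Once $I_+^\sharp(b)\to 0$ is established, the rest of the argument is a straightforward symmetrized version of the construction in \cite{castro2012rayleigh}.
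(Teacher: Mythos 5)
Your argument is correct and reaches the same conclusion as the paper, but by a somewhat different route. For $\bar z_1$ you use the explicit choice $\alpha - \tfrac12\sin(2\alpha)$, which automatically has the $\pi/2$-reflection symmetry built in, whereas the paper uses a piecewise definition with an unspecified smooth transition $f$ on $(\alpha_1,\pi/2]$. For $\bar z_2$ you write $\bar z_2 = bA - B$ with $A,B$ supported on disjoint regions, while the paper scales a single profile $z^*$ by $b$ near the origin (and, by the imposed symmetry, near $\pi$) and leaves it unchanged in between; these are essentially equivalent constructions. The substantive structural difference is in the sign analysis of $\partial_\alpha\bar u_1(0)$: the paper first folds the integral over $[0,\pi]$ down to $[0,\pi/2]$ using the even symmetry, yielding a combined kernel with $\cosh^2(\bar z_2)-\cos^2(\bar z_1)$ in the denominator, so the mirrored bump near $\pi$ is automatically absorbed into a single short integral $K_1$ that is sent to zero as $b\to\infty$; you instead keep the unfolded integral, split it into $I_+ + I_0 + I_+^\sharp$, and send the two outer pieces to zero separately by dominated convergence, relying on the uniform boundedness of $(x,y)\mapsto x\sin y/(\cosh x-\cos y)$. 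The folding is slightly more economical since it collapses the two problematic pieces into one, but both routes rest on the same kernel-boundedness and dominated-convergence input — which you spell out explicitly, whereas the paper leaves it implicit in the one-line statement that $K_1\to 0$.
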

\begin{proof}
    We adapt the family of curves constructed in the previous result to guarantee that the symmetry in condition 4 holds.

   There are many ways of defining $\bar{z}_1$. We give one possible construction. We define $\bar{z}_1$  as the piecewise function 
     \[
\bar{z}_1(\alpha) =
\begin{cases}
\alpha-\sin(\alpha), & \text{if } 0\leq \alpha \leq \alpha_1, \\
f(\alpha) & \text{if } \alpha_1 < \alpha \leq \frac{\pi}{2} \\
\end{cases}
\]
where $f(\alpha)$ is a smooth transition, positive and increasing. Furthermore, we extend $\bar{z}_1(\alpha)$ by imposing the symmetries
$$\bar{z}_1(\alpha) = \pi - \bar{z}_1(\pi-\alpha), \quad \bar{z}_1(\alpha) = - \bar{z}_1 (-\alpha).$$ We consider a  transition $f$ that ensures smoothness in the connecting point $\alpha = \frac{\pi}{2}$. It holds that $\partial_\alpha \bar{z}_1(\alpha) >0$ for all $\alpha \neq 0$ and $\partial_1 \bar{z}(0) = 0.$ 

\begin{figure}[h]
    \centering
\includegraphics[width=0.5\textwidth]{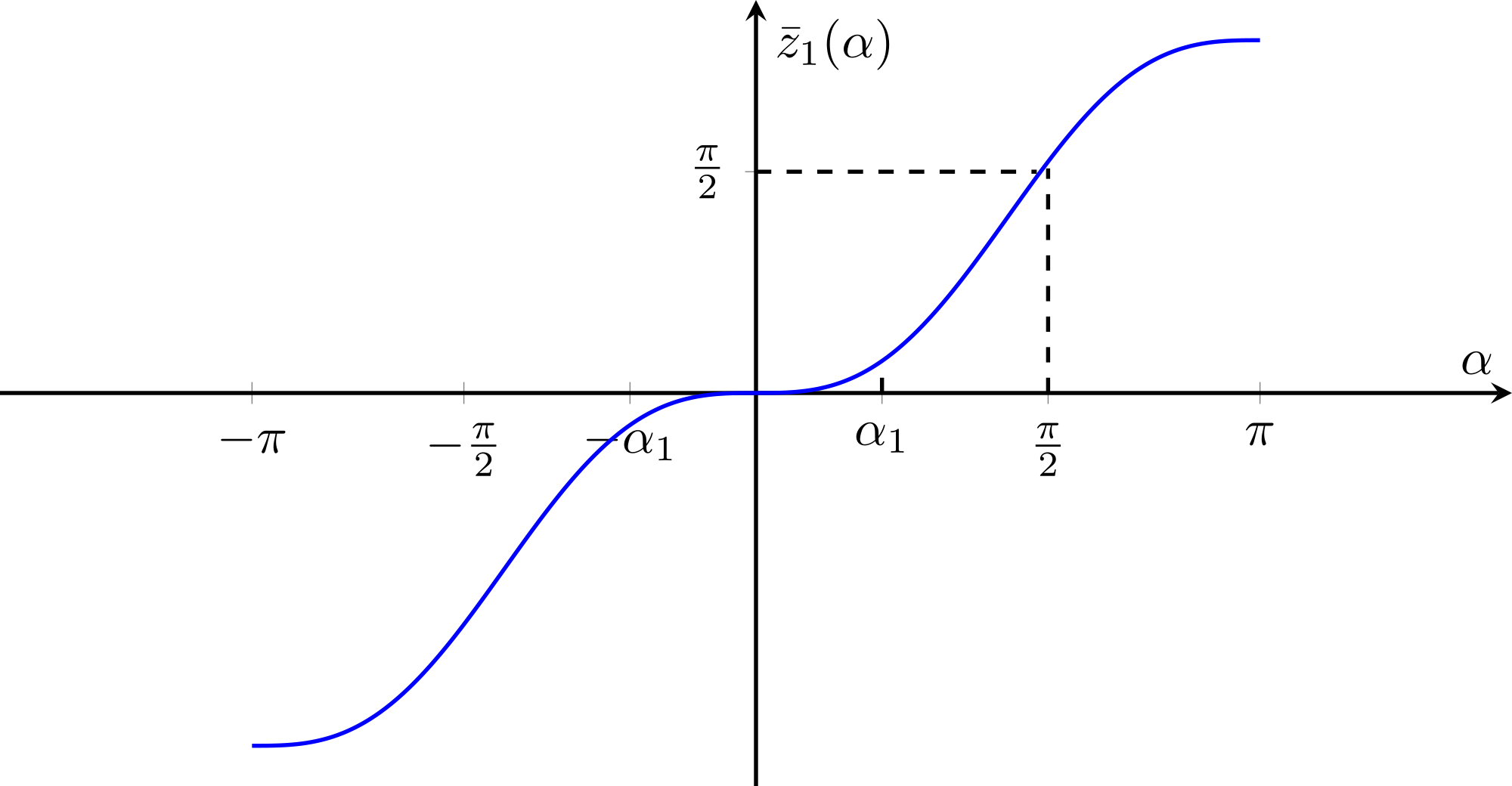}
  \caption{Sketch of a possible $\bar{z}_1(\alpha)$ fulfilling conditions 1-4.}
\end{figure}

Next, we define $\bar{z}_2(\alpha)$ very similarly as in Proposition \ref{prop:turning1}, adapting the construction to the new symmetry in condition 4: 
    \[
\bar{z}_2(\alpha) =
\begin{cases}
b z^*(\alpha), & \text{if } 0\leq \alpha \leq \alpha_2, \\
z^*(\alpha),  & \text{if } \alpha_2 < \alpha \leq \frac{\pi}{2},
\end{cases}
\]
where $\bar{z}_2(\alpha)$ is extended to $\frac{\pi}{2} < \alpha \leq \pi$ by even symmetry with respect to $\frac{\pi}{2}$ to fulfill \eqref{evensym} and it is extended to the negative interval by odd symmetry. As before, $b>0$ is sufficiently large and $0 < \alpha_2 < \frac{\pi}{2}$. Additionally, $\varepsilon >0$ is a small parameter and $z^\ast(\alpha)$ is an odd smooth function defined in $[-\frac{\pi}{2},\frac{\pi}{2}]$ with the properties:
\begin{enumerate}[(a)]
    \item \( \left( \partial_\alpha z^* \right)(0) > 0 \),
    \item \( z^*(\alpha) > 0 \) if \( \alpha \in (0, \alpha_2-\varepsilon) \), 
    \item \( z^*(\alpha) < 0 \) if \( \alpha \in (\alpha_2+\varepsilon, \frac{\pi}{2}-\varepsilon) \), 
    \item $z^\ast(\alpha) = 0$ otherwise.
\end{enumerate}
Notice that the flat transitions ensure smoothness in the connecting points $\alpha = \pm \frac{\pi}{2}$. 
\begin{figure}[h]
    \centering
\includegraphics[width=0.5\textwidth]{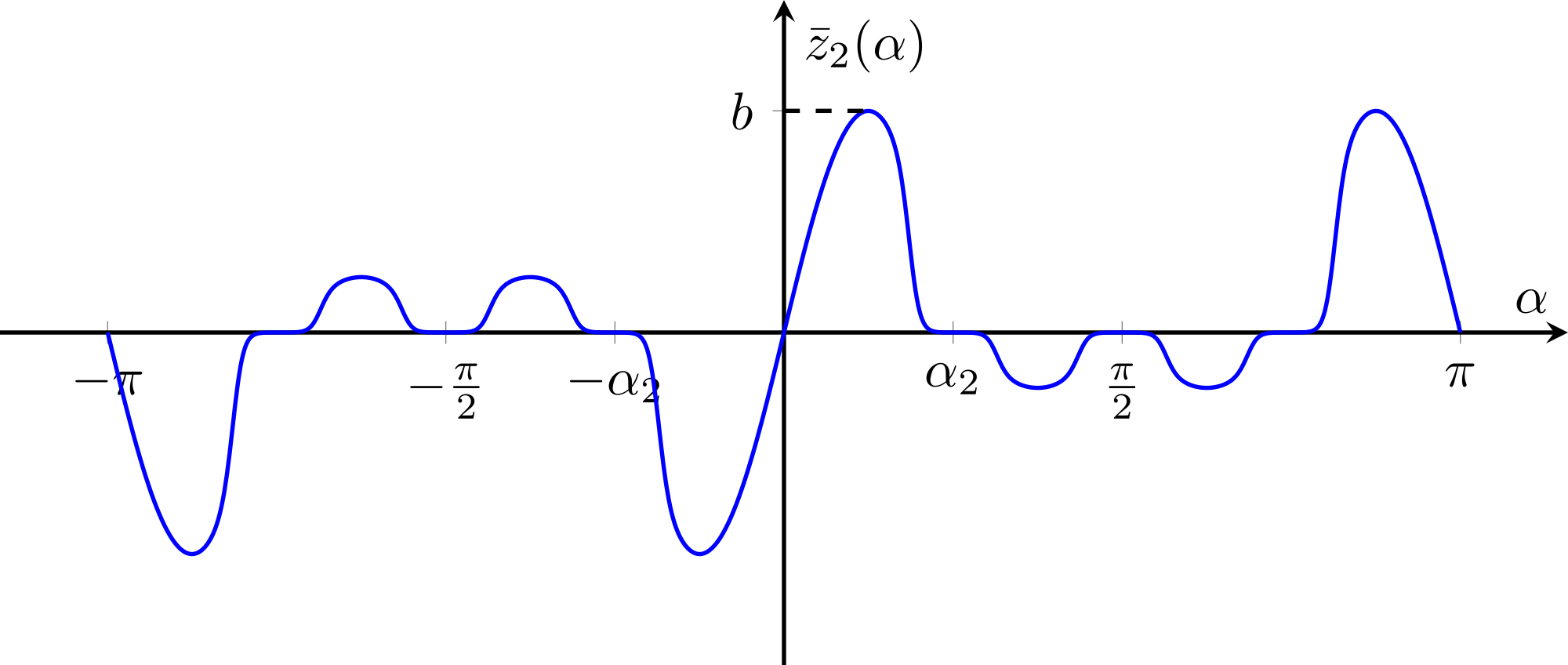}
	\caption{Sketch of a possible $\bar{z}_2(\alpha)$ fulfilling conditions 1-4.}
\end{figure}
With this definition, conditions 1, 2 and 4 are satisfied. It remains to show 3. Recalling \eqref{eq:turningint}, we further use the new symmetry defined by condition 4 to obtain 
\begin{align*}
  \partial_t \partial_\alpha \bar z_1(0,0)= \frac{\partial_\alpha \bar z_2(0)}{4\pi}\int_0^{\pi/2}\bar z_2(\beta) \sin(\bar z_1(\beta))\p_\alpha \bar z_1(\beta)
\cdot\Big(&\frac{1}{\cosh(\bar z_2(\beta))-\cos(\bar z_1(\beta))}\\
&\quad+ \frac{1}{\cosh(\bar z_2(\beta))+\cos(\bar z_1(\beta))}\Big) d\beta.
\end{align*}
Again, we split 
\begin{equation*}
     \partial_t \partial_\alpha \bar{z}_1(0,0) = K_1 + K_2,
\end{equation*}
where
\begin{align*}
    K_1 &= \partial_\alpha \bar z_2(0) \frac{1}{2\pi}  \int_{0}^{\alpha_2} \frac{\bar z_2(\beta) \sin(\bar z_1(\beta)) \cosh(\bar z_2(\beta))}{\cosh^2(\bar z_2(\beta))-\cos^2(\bar z_1(\beta))} \partial_\alpha \bar z_1(\beta) d\beta
\end{align*}
and 
\begin{align*}
    K_2 &= \partial_\alpha \bar z_2(0) \frac{1}{2\pi}  \int_{\alpha_2}^{\frac{\pi}{2}} \frac{\bar z_2(\beta) \sin(\bar z_1(\beta)) \cosh(\bar z_2(\beta))}{\cosh^2(\bar z_2(\beta))-\cos^2(\bar z_1(\beta))} \partial_\alpha \bar z_1(\beta) d\beta.
\end{align*}
At this point, we are on the same situation as in Proposition \ref{prop:turning1}, where 
$\lim_{b \to \infty} K_1  =0$ and $
K_2  < 0 $
independently of $b$. Therefore, $K_1 + K_2 <0$ for $b$ large enough, which gives condition 3. Note that the even symmetry of $\bar{z}_2$ with respect to $\alpha = \pm \frac{\pi}{2}$ means that the turning instability is also produced at $\alpha = \pm {\pi}$.
\end{proof}

\section{Numerical Simulations}\label{sec:numerics}
In this section we describe two numerical simulations of the Rayleigh-Taylor unstable gravity driven Stokes-Transport system evidencing the behavior of the Stokes-Transport system and the algorithm used to simulate them.

Before we proceed with the numerical scheme implemented, we recall the contour formulation for the Stokes-Transport problem in the case of a graph-type interface $z(\alpha,t) = (\alpha,h(\alpha,t))$ (see \cite{GGS25_SIMA})
\begin{align}
	h_t(\alpha,t) &= \frac{(\rho^- - \rho^+)}{8 \pi} \int_{-\pi}^\pi \log (2(\cosh (h(\alpha,t)-h(\beta,t))-\cos( \alpha-\beta))) h(\beta) [ 1+h_\alpha(\alpha) h_\alpha (\beta) ]   d \beta \nonumber\\
	&\quad+  \frac{(\rho^- - \rho^+)}{8 \pi} \int_{-\pi}^\pi \frac{h(\beta) (h(\alpha)-h(\beta) )}{\cosh(h(\alpha)-h(\beta)) - \cos (\alpha-\beta)} \left[ (h_\alpha(\alpha)h_\alpha(\beta)-1) \sinh(h(\alpha)-h(\beta))\right] d\beta\nonumber\\
	&\quad+  \frac{(\rho^- - \rho^+)}{8 \pi} \int_{-\pi}^\pi \frac{h(\beta) (h(\alpha)-h(\beta) )}{\cosh(h(\alpha)-h(\beta)) - \cos (\alpha-\beta)} \left[(h_\alpha (\alpha) + h_\alpha (\beta) ) \sin (\alpha-\beta)   \right] d\beta. \label{eq:grafo}
\end{align}
For simplicity, in this section we fix
$$
\frac{(\rho^- - \rho^+)}{8 \pi}=-1.
$$
In order to simulate such an equation we discretize the spatial domain $[-\pi,\pi]$ with a mesh of $M$ points, $x_j$, $j=1,...M$, and used central differences for the first spatial derivatives. Then the spatial integral is split in $M$ smaller integrals as follows. If we denote
\begin{align*}I&=\int_{-\pi}^\pi \log (2(\cosh (h(\alpha,t)-h(\beta,t))-\cos( \alpha-\beta))) h(\beta) [ 1+h_\alpha(\alpha) h_\alpha (\beta) ]   d \beta \nonumber\\
	&\quad+  \frac{(\rho^- - \rho^+)}{8 \pi} \int_{-\pi}^\pi \frac{h(\beta) (h(\alpha)-h(\beta) )}{\cosh(h(\alpha)-h(\beta)) - \cos (\alpha-\beta)} \left[ (h_\alpha(\alpha)h_\alpha(\beta)-1) \sinh(h(\alpha)-h(\beta))\right] d\beta\nonumber\\
	&\quad+  \frac{(\rho^- - \rho^+)}{8 \pi} \int_{-\pi}^\pi \frac{h(\beta) (h(\alpha)-h(\beta) )}{\cosh(h(\alpha)-h(\beta)) - \cos (\alpha-\beta)} \left[(h_\alpha (\alpha) + h_\alpha (\beta) ) \sin (\alpha-\beta)   \right] d\beta,
\end{align*}
we have that
\begin{align*}
I&\approx\sum_{j=1}^M
\int_{x_{j-1}}^{x_j} \log (2(\cosh (h(\alpha,t)-h(\alpha-\beta,t))-\cos(\beta))) h(\alpha-\beta) [ 1+\delta h(\alpha) \delta h (\alpha-\beta) ]    \nonumber\\
	&\quad+  \frac{h(\alpha-\beta) (h(\alpha)-h(\alpha-\beta) )}{\cosh(h(\alpha)-h(\alpha-\beta)) - \cos (\beta)} \left[ (\delta h(\alpha)\delta h(\alpha-\beta)-1) \sinh(h(\alpha)-h(\alpha-\beta))\right] \nonumber\\
	&\quad+  \frac{h(\alpha-\beta) (h(\alpha)-h(\alpha-\beta) )}{\cosh(h(\alpha)-h(\alpha-\beta)) - \cos (\beta)} \left[(\delta h (\alpha) + \delta h (\alpha-\beta) ) \sin (\beta)   \right] d\beta
\end{align*}
where $\delta$ denotes the central finite differences. Then, for a fixed $x_i$, two different types of integrals arise. On the one hand we have that the integrals 
$$
I_1=\int_{0}^{x_1}\text{ and }I_M=\int_{x_{M-1}}^{2\pi}
$$
are singular integrals. The singularity in these integrals are removed using a Taylor expansion. Indeed, we approximate
\begin{multline*}
I_1\approx h(\alpha)(1+(\delta h(\alpha))^2)\int_0^{x_1}\log(4\sin^2(\beta))d\beta\\+h(\alpha)(1+(\delta h(\alpha))^2)\log(1+(\delta h(\alpha))^2)\int_0^{x_1}d\beta+h(\alpha)2(\delta h(\alpha))^2)\int_0^{x_1}d\beta.
\end{multline*}
We use periodicity for $I_M$. As the remaining integrals are not singular, no further simplification is required in order to approximate them. These integrals are then computed using Simpson's rule. Then, an artificial viscosity is added
$$
\varepsilon \partial_\alpha^2 h(\alpha) \approx \varepsilon \delta^2h(\alpha)
$$
and an explicit Runge-Kutta with adaptive time step time integrator is carried out. We used the Matlab function \emph{ode45} for this task.

We used this scheme with $M=1024$ and $\varepsilon=0.001$ to compute the following examples. For the first example we consider a polygonal initial data given by
\begin{equation*}
f_1(x)=\left\{\begin{array}{cc}
x, &\text{ for } 0<x\leq1,\\
1, &\text{ for }1<x\leq\pi-1,\\
-x-\pi, &\text{ for }\pi-1<x\leq\pi,\\
\text{odd extension },&\text{ for }\pi<x\leq 2\pi.
\end{array}
\right.
\end{equation*}

The numerical results are shown in Figure \ref{fig:ex1a}, which suggest that for a short time, the length of the finger grows, whereas the curvature does not grow. The numerical simulation is only run for a short time up to $t=0.3$, since the numerical scheme we described above computes the evolution of a \emph{graph} interface $h(\alpha,t)$, and the scheme breaks down once an overturn happens.
\begin{figure}[htbp]
    
    \hspace*{-1.5cm}\includegraphics[width=11cm]{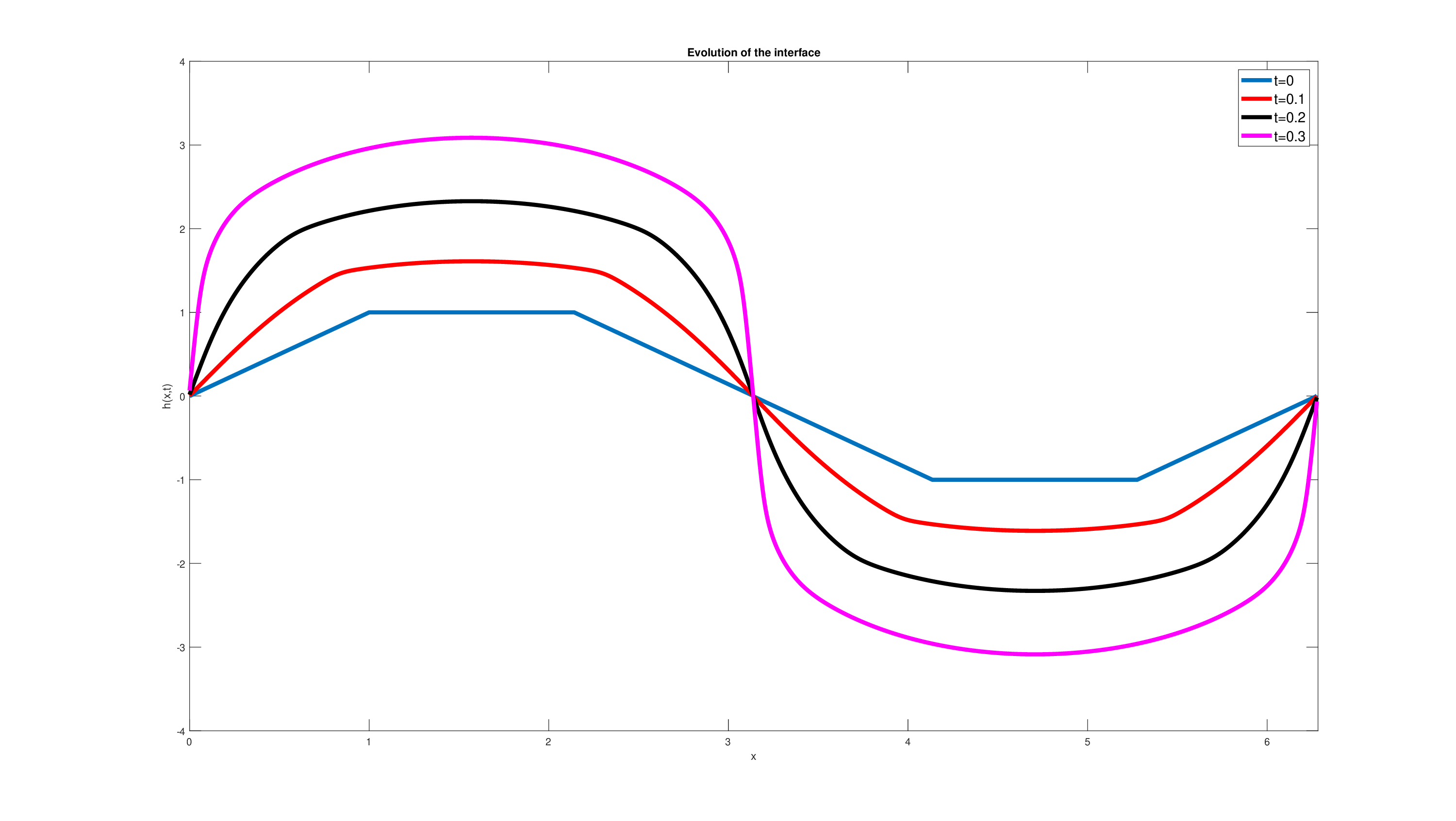} 
    \hspace*{-0.5cm}
    \includegraphics[width=8cm]{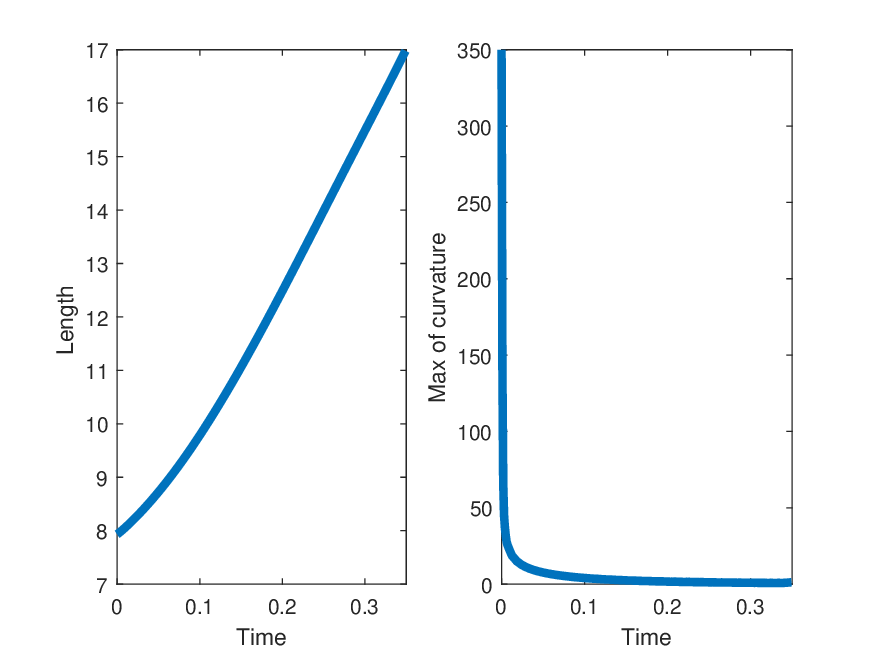}

\hspace*{4cm} (a) \hspace*{8.5cm} (b)
    
    \caption{(a) Evolution of the interface for Example 1; (b) Evolution of the length and the curvature of the interface for Example 1.}
    \label{fig:ex1a}
\end{figure}

For the second example we use the following initial data 
\begin{equation*}
f_2(x)=\left\{\begin{array}{cc}
\sin(x)^3, &\text{ for } 0<x\leq\pi,\\
-\sin(x)^3,&\text{ for }\pi<x\leq 2\pi.
\end{array}
\right.
\end{equation*}
In this case we run the code with $M=2048$. This initial data leads somehow to a \emph{relaxation} in the number of \emph{fingers} (see Figure \ref{fig:ex3}).
\begin{figure}[h]
    \centering
    \includegraphics[width=13cm]{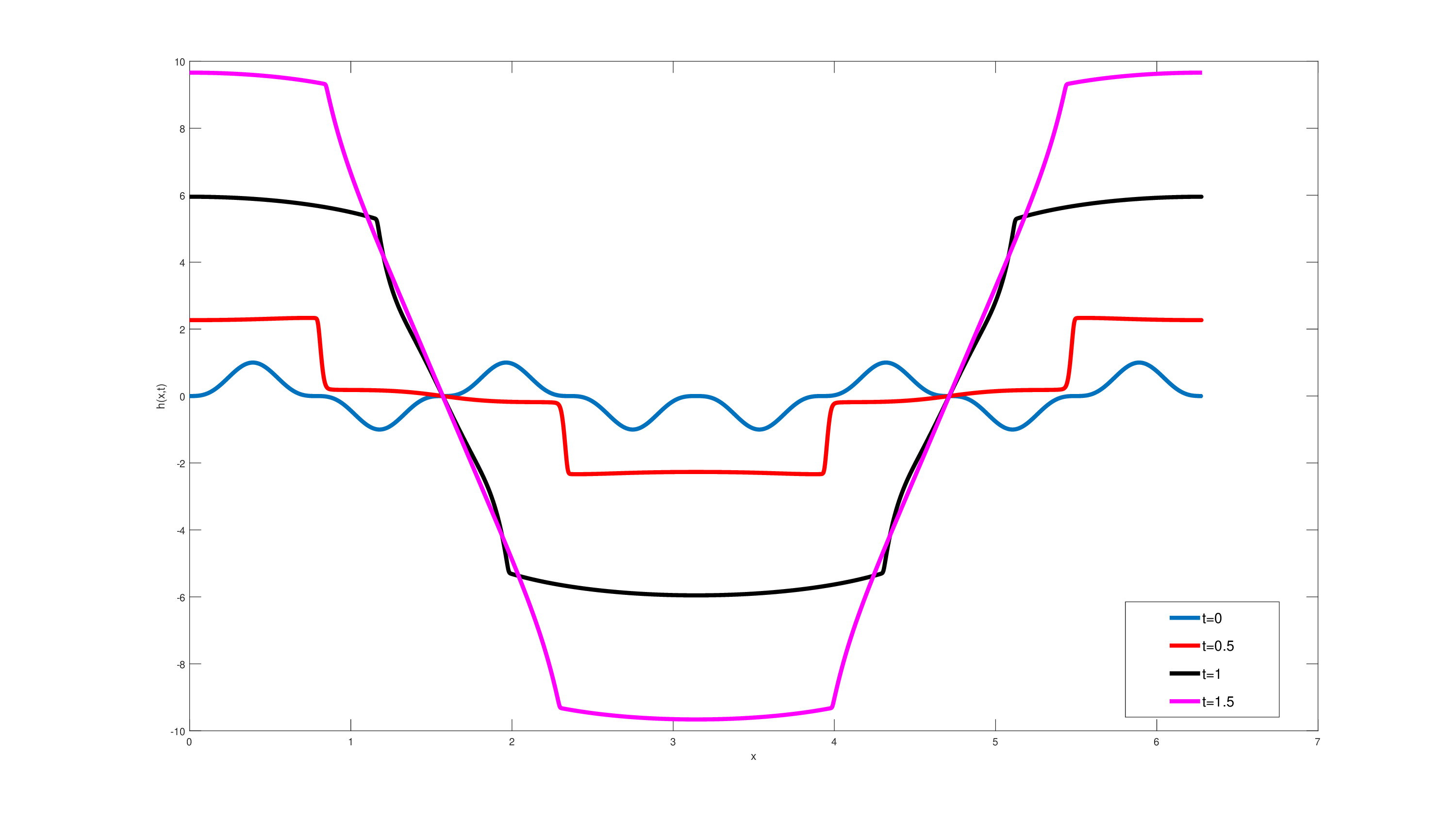}
    \caption{Evolution of of the interface for Example 2.}
    \label{fig:ex3}
\end{figure}

\newpage
\subsection*{Acknowledgments}
FG, RGB, ZH and ES are grateful to the Institute of Advanced Study at Princeton, where part of this work was done, for their warm hospitality and stimulating discussions. FG and ES were partially supported by the AEI through the grants PID2022-140494NA-I00 and PID2024-158418NB-I00.
FG was partially supported by MINECO grant RED2022-134784-T (Spain). ES acknowledges support from the Max Planck Institute for Mathematics in the Sciences and was partially supported by the Generalitat Valenciana through the call “Subvenciones a grupos de investigación emergentes”, project CIGE/2024/115. RGB was supported by the project ``An\'alisis Matem\'atico Aplicado
y Ecuaciones Diferenciales" Grant PID2022-141187NB-I00 funded by MCIN/AEI/10.13039/\\501100011033/FEDER, UE. ZH acknowledges partial support from the NSF-DMS Grant 2306726, NSF-DMS Grant 2106528, and a Simons Collaboration
Grant 601960. YY acknowledges partial support from the MOE Tier 1 grant and the Asian Young
Scientist Fellowship. The authors are grateful to Franck Sueur for stimulating discussions.


\bibliographystyle{acm}
\bibliography{references.bib}

\begin{flushleft}
	Francisco Gancedo: 
	\textsc{Departamento de An\'alisis Matem\'atico \& IMUS\\ Universidad de Sevilla,
	41012 Sevilla, Spain.}
	\textit{E-mail:
    fgancedo@us.es}\\
    \medskip
    Rafael Granero-Belinchón: 
	\textsc{Departamento de Matemáticas, Estadística y Computación\\ Universidad de Cantabria,
	39005 Santander, Spain.}
	\textit{E-mail:
    rafael.granero@unican.es}\\
    \medskip
    Zhongtian Hu: 
	\textsc{Mathematics Department,
    Princeton University\\
	Princeton, NJ 08544-1000, United States.}
	\textit{E-mail:
    zh1077@princeton.edu} \\
     \medskip
   Elena Salguero: 
	\textsc{Max Planck Institute for Mathematics in the Sciences\\
	04103 Leipzig, Germany.}
	\textit{E-mail:
    elena.salguero@mis.mpg.de} \\
    \medskip
    Yao Yao: 
	\textsc{Department of Mathematics, National University of Singapore\\
	119076, Singapore.}
	\textit{E-mail:
    yaoyao@nus.edu.sg}
\end{flushleft}

\end{document}